\providecommand{\keywords}[1]
{
	\small	
	\textbf{\textit{Keywords:}} #1
}
\NewDocumentEnvironment{alignb}{b}{%
	\begin{align*}
		\refstepcounter{equation} #1 \tag{\theequation}
	\end{align*}
}{}
\newcommand{\myitem}[1]{%
	\item[#1]\protected@edef\@currentlabel{#1}%
}
\declaretheorem[numberwithin=section]{theorem, definition}
\declaretheorem{lemma, proposition, remark}[style=plain,
numberwithin=section]
\numberwithin{equation}{section}
\newtheorem{mytheorem}{Theorem}
\newtheorem{defi}{Definition}[section]
\numberwithin{mytheorem}{section}
\newenvironment{myproof}[2] {\paragraph{Proof of {#1} {#2} :}}{\hfill$\square$}
\title{ Normalized solutions to a quasilinear equation involving critical Sobolev exponent}
\author{Nidhi\footnote{Department of Mathematics, Indian Institute of Technology, Delhi, Hauz Khas, New Delhi-110016, India. e-mail: nidhi.nidhi@maths.iitd.ac.in}\; and K. Sreenadh\footnote{Department of Mathematics, Indian Institute of Technology, Delhi, Hauz Khas, New Delhi-110016, India. e-mail: sreenadh@maths.iitd.ac.in}\;}
\date{}
\begin{document}
	\maketitle
	\begin{abstract}
		\noindent In this paper we study the existence and regularity results of normalized solutions to the following quasilinear elliptic Choquard equation with critical Sobolev exponent and mixed diffusion type operators:
		\begin{equation*}
			\begin{array}{rcl}
				-\Delta_p	u+(-\Delta_p)^su & = & \lambda |u|^{p-2}u +|u|^{p^*-2}u+ \mu(I_{\alpha}*|u|^q)|u|^{q-2}u\;\;\text{in } \mathbb{R}^N,\\
				\int_{\mathbb{R}^N}|u|^pdx & = & \tau,
			\end{array}
		\end{equation*}
		where $N\geq 3$, $\tau>0$, $\frac{p}{2}(\frac{N+\alpha}{N})<q<\frac{p}{2}(\frac{N+\alpha}{N-p})$, $I_{\alpha}$ is the Riesz potential of order $\alpha\in (0,N)$, $\mu>0$ is a parameter, $(-\Delta_p)^s$ is the fractional p-laplacian operator, $p^*=\frac{Np}{N-p}$ is the critical Sobolev exponent and  $\lambda$ appears as a Lagrange multiplier.\\
		\noindent \keywords{Normalized solutions, Choquard equation, critical growth, local and nonlocal operator, existence results, H$\ddot{\text{o}}$lder regularity. }
	\end{abstract}
	\section{Introduction}
	This work is mainly focused on the existence and regularity results of the following quasilinear equation with critical Sobolev exponent, Choquard nonlinearity and a mixed diffusion type operator:
	\begin{equation}\label{1.1}
		-\Delta_p	u+(-\Delta_p)^su  =  \lambda |u|^{p-2}u +|u|^{p^*-2}u+ \mu(I_{\alpha}*|u|^q)|u|^{q-2}u\;\;\text{in } \mathbb{R}^N,
	\end{equation}
	with a fixed $L^p$ norm
	\begin{equation}\label{1.2}
		\int_{\mathbb{R}^N}|u|^pdx  =  \tau,
	\end{equation}
	where $N\geq 3$, $0<s<1<p<N$, $\tau>0$, $p^*=\frac{Np}{N-p}$ is the critical Sobolev exponent.
	The operators p-laplacian ($\Delta_p$) and fractional p-laplacian $(-\Delta_p)^s$ are defined as:
	$$\Delta_pu=div(|\nabla u|^{p-2}\nabla u),$$
	and
	$$(-\Delta_p)^su(x)=\lim_{\epsilon\rightarrow 0}\int_{\mathbb{R}^N\setminus B_{\epsilon}(0)}\frac{|u(x)-u(y)|^{p-2}(u(x)-u(y))}{|x-y|^{N+sp}}dy \text{ for } s\in (0,1).$$
	$I_{\alpha}$ is the Riesz potential of order $\alpha\in (0,N)$ given by
	\begin{equation}\label{A_alpha}
		I_{\alpha}(x)=\frac{A_{N,\alpha}}{|x|^{N-\alpha}}\text{ with } A_{N,\alpha}=\frac{\Gamma(\frac{N-2}{2})}{\pi^{\frac{N}{2}}2^{\alpha}\Gamma(\frac{\alpha}{2})}\text{ for every }x\in\mathbb{R}^N\setminus \{0\}.
	\end{equation}
	The problem is well defined for all $\frac{p}{2}(\frac{N+\alpha}{N})\leq q\leq \frac{p}{2}(\frac{N+\alpha}{N-p})$ due to the following well known Hardy-Littlewood-Sobolev inequality:
	\begin{proposition}\label{prop1.1}
		Let $t,r>1$ and $0<\alpha <N$ with $1/t+1/r=1+\alpha/N$, $f\in L^t(\mathbb{R}^N)$ and $h\in L^r(\mathbb{R}^N)$. There exists a sharp constant $C(t,r,\alpha,N)$ independent of $f,h$, such that
		\begin{equation}\label{HLS}
			\int_{\mathbb{R}^N}\int_{\mathbb{R}^N}\frac{f(x)h(y)}{|x-y|^{N-\alpha}}~dxdy \leq C(t,r,\alpha,N) \|f\|_{L^t}\|h\|_{L^r}.
		\end{equation}
		If $t=r=2N/(N+\alpha)$, then
		\begin{align}\label{C_alpha}
			C(t,r,\alpha,N)=C(N,\alpha)= \pi^{\frac{N-\alpha}{2}}\frac{\Gamma(\frac{\alpha}{2})}{\Gamma(\frac{N+\alpha}{2})}\left\lbrace \frac{\Gamma(\frac{N}{2})}{\Gamma(N)}\right\rbrace^{-\frac{\alpha}{N}}.
		\end{align}
		Equality holds in  \eqref{HLS} if and only if $\frac{f}{h}\equiv constant$ and
		$\displaystyle h(x)= A(\gamma^2+|x-a|^2)^{(N+\alpha)/2}$
		for some $A\in \mathbb{C}, 0\neq \gamma \in \mathbb{R}$ and $a \in \mathbb{R}^N$.
	\end{proposition}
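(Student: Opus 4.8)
The plan is to prove \ref{prop1.1} in two stages: first establish finiteness of the best constant (the inequality with \emph{some} admissible constant), and then, in the diagonal case $t=r=2N/(N+\alpha)$, pin down the \emph{sharp} value \eqref{C_alpha} together with the extremizers. Throughout I would exploit that the bilinear functional $\Phi(f,h)=\int_{\mathbb{R}^N}\int_{\mathbb{R}^N} f(x)\,|x-y|^{\alpha-N}\,h(y)\,dx\,dy$ is positivity preserving, so it suffices to treat nonnegative $f,h$, and that $\|f\|_{L^t}$, $\|h\|_{L^r}$ are invariant under symmetric-decreasing rearrangement.

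For finiteness, the cleanest route is to view $\Phi(f,h)=\int_{\mathbb{R}^N} f\,(I*h)$ with kernel $I(x)=|x|^{\alpha-N}$, which lies in the weak Lebesgue space $L^{N/(N-\alpha),\infty}(\mathbb{R}^N)$ since $|\{|x|^{\alpha-N}>\lambda\}|$ scales like $\lambda^{-N/(N-\alpha)}$. The weak Young (O'Neil) inequality $\|I*h\|_{L^{t'}}\le C\,\|I\|_{L^{N/(N-\alpha),\infty}}\|h\|_{L^r}$ holds exactly under the exponent relation $1+\tfrac{1}{t'}=\tfrac{N-\alpha}{N}+\tfrac{1}{r}$, which is equivalent to $\tfrac{1}{t}+\tfrac{1}{r}=1+\tfrac{\alpha}{N}$; Hölder with the dual pair $(t,t')$ then yields \eqref{HLS} with a finite constant. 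Alternatively one splits the kernel as $I\mathbf{1}_{\{|x|<\rho\}}+I\mathbf{1}_{\{|x|\ge\rho\}}$, estimates the first piece by Young and the second by Hölder, and optimizes in $\rho$ pointwise, which is the classical Hardy--Littlewood argument.

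For the sharp constant I would restrict to $t=r=2N/(N+\alpha)$ and follow Lieb's strategy. By the Riesz rearrangement inequality, $\Phi(f,h)\le \Phi(f^*,h^*)$ with the norms preserved, so extremizers may be sought among radial, nonincreasing functions; moreover, since $\Phi$ is positive definite (its kernel has Fourier transform a positive multiple of $|\xi|^{-\alpha}$), Cauchy--Schwarz gives $\Phi(f,h)\le\sqrt{\Phi(f,f)\,\Phi(h,h)}$, which forces $f$ and $h$ to be proportional and reduces the problem to maximizing $\Phi(f,f)/\|f\|_{L^t}^2$. The decisive structural fact is the conformal invariance of $\Phi$: under stereographic projection $\mathbb{R}^N\to S^N$ the functional transforms covariantly, so the problem acquires the full conformal group as a symmetry. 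I would then run the \emph{competing symmetries} argument of Carlen--Loss: iterate the composition of symmetric-decreasing rearrangement with a fixed conformal inversion; the functional is nondecreasing under the former and invariant under the latter, and the iterates converge to the unique joint fixed point $h(x)=(\gamma^2+|x-a|^2)^{-(N+\alpha)/2}$ (this is the negative-power version of the family written in \ref{prop1.1}). Substituting this extremizer into $\Phi(f,f)/\|f\|_{L^t}^2$ and evaluating the resulting Beta/Gamma integrals produces exactly \eqref{C_alpha}.

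The main obstacle is precisely this last stage: finiteness is essentially a soft consequence of weak Young, but the sharp constant and the rigidity of equality require the conformal geometry. Concretely, the hard points are (i) verifying the exact covariance of $\Phi$ under inversions so that the conformal group genuinely acts, (ii) proving convergence of the competing-symmetries iteration to a nonzero limit, ruling out concentration or vanishing of mass, and (iii) the Gamma-function bookkeeping that converts the extremizer into the closed form \eqref{C_alpha}. The equality characterization then follows from the strict monotonicity of Riesz rearrangement: equality forces $f,h$ to be translates and dilates of symmetric-decreasing functions proportional to the fixed point, which is exactly the stated family with $f/h\equiv\text{constant}$.
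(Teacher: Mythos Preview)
Your outline is essentially correct and follows the standard route to the sharp Hardy--Littlewood--Sobolev inequality: weak Young/O'Neil for finiteness, Riesz rearrangement plus positive-definiteness to reduce to the diagonal radial case, and then Lieb's conformal argument (or the Carlen--Loss competing-symmetries iteration) to identify the extremizers and compute \eqref{C_alpha}. You also correctly flag that the extremizer should carry a \emph{negative} exponent, $h(x)=A(\gamma^2+|x-a|^2)^{-(N+\alpha)/2}$; the positive exponent in the stated proposition is a typo, since the positive-power function is not even integrable.

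However, there is nothing to compare against: the paper does not prove \autoref{prop1.1}. It is stated in the introduction as a classical background result (the Hardy--Littlewood--Sobolev inequality, due to Hardy--Littlewood and Sobolev, with the sharp constant and extremizers due to Lieb) and is used throughout without proof. So your proposal is not an alternative to the paper's argument---it simply supplies a proof where the paper offers none. If you want to include it, what you have sketched is the right architecture; the genuine work, as you note, lies in (i) the conformal covariance computation, (ii) the compactness step in the competing-symmetries iteration, and (iii) the Gamma-function evaluation, all of which are carried out in detail in Lieb's original paper and in the Carlen--Loss treatment.
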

	\noindent It follows from \autoref{prop1.1} that
	\begin{align*}
		{\mathcal{A}_q(u):=	\int_{\mathbb R^N}\int_{\mathbb R^N}\frac{|u(x)|^{q}|u(y)|^{q}}{|x-y|^{N-\alpha}}~dxdy}
	\end{align*}
	is well defined if $\frac{p}{2}\left(\frac{N+\alpha}{N}\right)\leq q \leq \frac{p}{2}\left(\frac{N+\alpha}{N-p}\right)$.
	The expression $(I_{\alpha}*|u|^q)|u|^{q-2}u$ is referred to as Choquard-type nonlinearity, as it was employed by Choquard in the examination of the Hartree-Fock theory of a one-component plasma. At the Symposium on Coulomb Systems, Choquard explained how the energy functional associated with the problem:
	\begin{equation}\label{Choq}
		\left\{ \begin{array}{rl}    & -\Delta u  +  u = (I_2*|u|^2)u\;\;\text{in } \mathbb{R}^3,\\ &  u \in H^1(\mathbb{R}^3). 
		\end{array} \right.
	\end{equation}
	can be used in this regard, see \cite{lieb1977existence}. Also, \eqref{Choq} was used by Pekar in \cite{pekar1954untersuchungen} to study the quantum theory model of a stationary polaron. The equations of the type 
	\begin{equation}\label{Choq_eq}
		-\Delta u +\lambda u = \mu(I_{\alpha}*|u|^p)|u|^{p-2}u \text{ in } \mathbb{R}^N,
	\end{equation}
	are called the Choquard equation. Such equations mostly occur in the field of quantum physics, see \cite{penrose1996gravity}, and have been extensively studied. We refer the readers to go through \cite{filippucci2020singular}, \cite{liu2022another}, and \cite{moroz2013groundstates} to see how the existence, multiplicity, and qualitative characteristics of the solution to \eqref{Choq_eq} are deduced.
\noindent This study is devoted to the normalized solution for a category of quasilinear elliptic Choquard equation associated with the operator $\mathcal{L}=-\Delta_p+(-\Delta_p)^s$. To the best of our knowledge, there hasn't been much research on such problems. Formally speaking, the solution to the class of problems of the form: 
\begin{equation}\label{Norm_sol}
	\left\{ \begin{array}{rl}   
		& -\Delta u  =  \lambda u +g(u)\;\;\text{in } \mathbb{R}^N,\\
		&  \int_{\mathbb{R}^N}|u|^2dx  =  c.
	\end{array}
	\right.
\end{equation}
is called a normalized solution. The solution of \eqref{Norm_sol} gives us the stationary state of a nonlinear Schr$\ddot{\text{o}}$dinger equation with a predetermined fixed $L^2$-norm. Jeanjean studied \eqref{Norm_sol} in \cite{Jeanjean1997Existence} and deduced the existence of a radially symmetric solution under some assumptions on $g$. Further, Bartsch and De Valeriola proved the existence of infinitely many solutions (see \cite{bartsch2012normalized}). Moreover, taking $g(t)=|t|^{p-1}t$, \eqref{Norm_sol} has been studied for bounded domains along with some boundary conditions; interested readers can go through \cite{noris2015existence} and \cite{pierotti2017normalized}. The existence of normalized solutions for nonlinear Schrödinger systems has been thoroughly investigated. Readers seeking further information may consult the references \cite{gou2018multiple, bartsch2016normalized, bartsch2018normalized, bartsch2019multiple, noris2014stable, noris2019normalized}.
The study of quadratic ergodic mean field games systems also investigates normalized solution types, as discussed in \cite{pellacci2021normalized}. \\
\noindent Recently, the study of normalized solutions to the Choquard equation has attracted the researchers. The linear case, that is $p=2$, has been dealt with by a lot of authors; for instance, see \cite{lei2023sufficient,Shen2024Normalized,Soave2020Normalized}.
The authors in \cite{Shang2023Normalized} and \cite{Meng2024Normalized} have studied the existence of normalized solutions to the critical Choquard equation, where the critical exponent is due to the above-mentioned Hardy-Littlewood-Sobolev inequality, involving classical and fractional Laplacian operators, respectively and in \cite{Giacomoni2024Normalized}, such critical growth Choquard equation has been seen studied with the presence of mixed diffusion type operator.\\
Motivated by the previously mentioned literature, we aim to investigate the existence and regularity results for normalized solutions to a broader category of quasilinear elliptic equations featuring critical Sobolev exponents and mixed diffusion-type operators. In addition to being a critical case, the presence of classical p-laplacian and fractional p-laplacian operators in the equation intensifies the complexity and interest of our study. \\
We will initiate our study by discussing the regularity properties of a radially symmetric positive weak solution of \eqref{1.1}-\eqref{1.2}.
Here, the space framework is the Banach space $W^{1,p}(\mathbb{R}^N)$ equipped with the following equivalent norm:
	$$\left\| u\right\|=(\left\| \nabla u\right\|_p^p+\left\| u\right\|_p^p+[u]_{s,p}^p)^{\frac{1}{p}},$$
	where  
	$$[u]_{s,p}^p:=\int_{\mathbb{R}^N}\int_{\mathbb{R}^N}\frac{|u(x)-u(y)|^p}{|x-y|^{N+sp}}dxdy,$$
	and $W^{1,p}_r(\mathbb{R}^N)=\{u\in W^{1,p}(\mathbb{R}^N): u \text{ is radial}\}$. Morever, the space $\mathbb{W}=W^{1,p}(\mathbb{R}^N)\times \mathbb{R}$, with $\left\| (u,t)\right\|^p_{\mathbb{W}}:=\left\| u\right\|^p+|t|^p$, will also be used. The notion of weak solution for \eqref{1.1}-\eqref{1.2} is as follows:
	\begin{defi}
		A function $u\in W^{1,p}(\mathbb{R}^N)$ is said to be the weak solution of \eqref{1.1}-\eqref{1.2} if $\left\| u\right\|_p^p =\tau$ and
		\begin{equation*}\label{weak_sol}
			\int_{\mathbb{R}^N} |\nabla u|^{p-2}\nabla u\nabla v +\ll u, v \gg_{s,p} = \lambda\int_{\mathbb{R}^N}|u|^{p-2}uv+\int_{\mathbb{R}^N}|u|^{p^*-2}uv+\mu\int_{\mathbb{R}^N}(I_{\alpha}*|u|^{q})|u|^{q-2}uv,
		\end{equation*}
		for every $v\in W^{1,p}(\mathbb{R}^N)$,
		where $\ll u,v \gg_{s,p}:=\frac{1}{2}\int_{\mathbb{R}^N}\int_{\mathbb{R}^N}\frac{|u(x)-u(y)|^{p-2}(u(x)-u(y))(v(x)-v(y))}{|x-y|^{N+2s}}dxdy$.
	\end{defi}
	\noindent Defining $A:=\mu\mathcal{A}_{q}$, the energy functional corresponding to the problem \eqref{1.1} is given by
	$$F(u)=\frac{1}{p}\left\|\nabla u \right\|_{p}^p+\frac{\lambda}{p}\left\|  u\right\|_p^p+\frac{1}{p}[u]_{s,p}^p-\frac{1}{p^*}\left\|u\right\|_{p^*}^{p^*}-\frac{1}{2q}A(u),$$
	that is, a critical point of $F$ which satisfies the constraint turns out to be the weak solution of \eqref{1.1}-\eqref{1.2}. Precisely, we can say the following about a solution of \eqref{1.1}-\eqref{1.2} :
		\begin{mytheorem}\label{Theorem 1.1}
		Let $p\geq 2$, $N-\alpha\leq 2p$ and $p\leq q\leq \frac{p}{2}(\frac{N+\alpha}{N-p})$. If $0<u\in W^{1,p}(\mathbb{R}^n)$ is a solution of \eqref{1.1}-\eqref{1.2}, corresponding to some $\lambda<0$, then $u\in L^{\infty}_{loc}(\mathbb{R}^N)$. Moreover, if $u$ is radially symmetric, then $u\in L^{\infty}(\mathbb{R}^N)\cap C^{\delta}_{loc}(\mathbb{R}^N)$ for every $0<\delta<\Theta=\min\{\frac{sp}{p-1},1\}$.
	\end{mytheorem}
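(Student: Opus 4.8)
The plan is to establish the three assertions in turn: interior $L^\infty$ bounds by a Moser-type iteration, the global bound in the radial case via a decay lemma, and finally H\"older continuity once $u$ is known to be bounded. For the local bound, fix a ball $B_{2R}$ and $\eta\in C_c^\infty(B_{2R})$ with $\eta\equiv 1$ on $B_R$. For $\beta\ge1$ and $M>0$ set $u_M=\min\{u,M\}$ and test the weak formulation with $\phi=\eta^p u\,u_M^{p(\beta-1)}\ge0$. Because $\lambda<0$ and $u>0$, the term $\lambda\int u^{p-1}\phi\le0$, so it only helps and may be discarded. Expanding $\int|\nabla u|^{p-2}\nabla u\cdot\nabla\phi$ and using $p\ge2$ in the usual algebraic manipulations yields a lower bound controlling $\|\nabla(\eta u u_M^{\beta-1})\|_p^p$ modulo a cutoff error $C\beta^p\int|\nabla\eta|^p u^{p\beta}$; the Sobolev inequality then converts this into a gain of integrability from $p\beta$ to $p^*\beta$. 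The nonlocal operator contributes $\langle(-\Delta_p)^su,\phi\rangle$; testing with the monotone profile $G(u)=u\,u_M^{p(\beta-1)}$ makes its principal part nonnegative, while the error generated by the cutoff $\eta^p$ is controlled by a nonlocal tail estimate as in the fractional De Giorgi--Nash--Moser theory.

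The delicate point is the right-hand side. For the Sobolev-critical term I would write $u^{p^*}=u^{p^*-p}\,u^p$ and apply H\"older with exponents $N/p$ and $p^*/p$, using $(p^*-p)\tfrac{N}{p}=p^*$, to obtain
\begin{equation*}
	\int\eta^p u^{p^*}u_M^{p(\beta-1)}\le\|u\|_{L^{p^*}(B_{2R})}^{p^*-p}\,\big\|\eta\,u\,u_M^{\beta-1}\big\|_{L^{p^*}}^{p}.
\end{equation*}
By absolute continuity of $\int u^{p^*}$, the prefactor $\|u\|_{L^{p^*}(B_{2R})}^{p^*-p}$ is arbitrarily small once $R$ is small, so the whole term is absorbed into the Sobolev left-hand side. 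The Choquard term $\int(I_\alpha*u^q)\,\eta^p u^q u_M^{p(\beta-1)}$ is estimated by the Hardy--Littlewood--Sobolev inequality of \autoref{prop1.1}: the range $p\le q\le\frac p2\frac{N+\alpha}{N-p}$ together with $N-\alpha\le 2p$ keeps the conjugate exponents in the regime where $W^{1,p}$-integrability controls the $L^t$ factor, and a further H\"older split lets the remaining factor be absorbed exactly as the critical term (with a local-smallness prefactor at the upper endpoint). Iterating the resulting reverse inequality in $\beta$ and letting $M\to\infty$ by Fatou yields $u\in L^\infty_{\mathrm{loc}}(\mathbb{R}^N)$.

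For radial $u$, the one-dimensional identity $|u(r_0)|^p\le p\int_{r_0}^\infty u^{p-1}|u'|\,dr$ combined with the radial weight $r^{N-1}$ gives the decay estimate $|u(x)|\le C\|u\|\,|x|^{-(N-1)/p}$ for $|x|\ge1$, whence $u\to0$ at infinity; together with the interior bound on $B_2$ this gives $u\in L^\infty(\mathbb{R}^N)$. Knowing $u\in L^\infty$, the first two nonlinearities in \eqref{1.1} are bounded pointwise, while $I_\alpha*u^q\in L^\infty$ because $u^q\in L^1\cap L^\infty$ (from $u\in L^{p^*}\cap L^\infty$ and $q\in[p,p^*]$, splitting the convolution into near and far parts); hence $\mathcal{L}u=f$ with $f\in L^\infty_{\mathrm{loc}}$. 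Applying the interior H\"older estimates available for the mixed local--nonlocal $p$-Laplacian with bounded data then gives $u\in C^\delta_{\mathrm{loc}}(\mathbb{R}^N)$ for every $\delta<\Theta=\min\{sp/(p-1),1\}$, the threshold $sp/(p-1)$ being the ceiling imposed by $(-\Delta_p)^s$ and the value $1$ the ceiling contributed by the local operator.

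I expect the principal obstacle to be running a single iteration that simultaneously tames the Sobolev-critical term, the fractional operator, and the Choquard convolution: the critical terms demand genuinely \emph{local} smallness of $\|u\|_{L^{p^*}}$, whereas both nonlocalities are intrinsically global, so the two scales must be carefully reconciled at each step of the iteration, and the cutoff tail estimates for $(-\Delta_p)^s$ must be kept compatible with the H\"older absorption of the convolution term.
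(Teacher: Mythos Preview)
Your overall plan is sound, and the radial decay plus the final H\"older step match the paper exactly. The Moser iteration, however, is organised quite differently from the paper's, and the difference is worth spelling out.

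You work \emph{locally}: fix a ball, introduce a cutoff $\eta$, and absorb the two critical nonlinearities by making $\|u\|_{L^{p^*}(B_{2R})}$ small via shrinking $R$. As you yourself anticipate, this creates a genuine tension between a local argument and two \emph{global} objects---the fractional $p$-Laplacian tail and the Riesz convolution. In particular, your absorption of the Choquard term is incomplete as stated: the HLS factor $\|u^q\|_{L^{2N/(N+\alpha)}}$ coming from $I_\alpha*u^q$ is a fixed global quantity that does \emph{not} become small with $R$, so ``local smallness'' cannot absorb it directly; you would first have to split $I_\alpha*u^q$ into a near part over $B_{4R}$ (then small) and a far part (pointwise bounded on $B_{2R}$, hence subcritical). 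Likewise the cutoff produces a fractional tail of $u$ that must be carried through every step.

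The paper sidesteps all of this by running the iteration \emph{globally}, with no cutoff. The test function is simply $u_\gamma^\beta$ with $u_\gamma=\min\{|u|,\gamma\}$ and $\beta=kp-p+1$, and the hypothesis $\lambda<0$ is used not merely to discard a term but to recover the full $W^{1,p}(\mathbb{R}^N)$ norm of $u_\gamma^k$ on the left, so that the global Sobolev embedding applies. Absorption of the critical pieces is then achieved by a level-set split $\{|u|<\delta\}$ versus $\{|u|\ge\delta\}$: on the first set one uses $q\ge p$ and $p_\alpha^*\ge p$ (this is exactly where $N-\alpha\le 2p$ enters), and on the second the coefficient $\bigl(\int_{\{|u|\ge\delta\}}|u|^{p^*}\bigr)^{\theta}$ is made small by taking $\delta$ large. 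This yields the clean recursion $\|u\|_{kp^*}\le C^{1/(kp)}k^{(p-1)/(kp)}\|u\|_{kpp^*/p_\alpha^*}$, iterated along $k_n=(p_\alpha^*/p)^n k_0$, with no tail bookkeeping whatsoever. Your local scheme can in principle be completed with the extra splittings above, but the paper's global route is shorter and eliminates precisely the obstacle you flagged.
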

		\noindent Following the work of R. Biswas and S. Tiwari in \cite{Biswas2023Regularity}, an  iterative scheme was constructed to show that a  positive solution must lie in $L^{\infty}_{loc}(\mathbb{R}^N)$. Moreover, using the radial lemma \cite{Yuan2013radial}, it is evident that a radially symmetric solution would lie in $L^{\infty}(\mathbb{R}^N)$, and hence the results of  \cite{Garain2023Higher} can be used to figure out the H$\ddot{\text{o}}$lder regularity.\\ 
	After obtaining the regularity results, we investigated the existence of the normalized solution.
	Inspired by the work of X. Shang and P. Ma in \cite{Shang2023Normalized}, existence results for the cases: $\frac{p}{2}\left(\frac{N+\alpha}{N}\right)< q < \frac{p}{2}\left(\frac{N+\alpha+sp}{N}\right)$ and $\frac{p}{2}\left(\frac{N+\alpha+sp}{N}\right)\leq q < \frac{p}{2}\left(\frac{N+\alpha}{N-p}\right)$ is discussed separately. Settting the set of functions satifying the constraint by:
	$$S(\tau):=\left\{u\in W^{1,p}({\mathbb{R}^N}):\left\| u\right\|_p^p=\tau\right\}.$$
	and using the fiber map defined as$$t\star u (x):=e^{\frac{Nt}{p}}u(e^tx),$$
	the idea is to work on the natural Pohozaev manifold, given by
	$$P(\tau):= S(\tau)\cap\bar{M} =S(\tau)\cap\left\{u\in W^{1,p}(\mathbb{R}^N): M(u)=0\right\},$$
	where $$M(u):= \left\| \nabla u \right\|_p^p+s[u]_{s,p}^p-\left\| u\right\|_{p^*}^{p^*}-\gamma_qA(u),$$
	with $\gamma_q=\frac{N}{p}-\frac{N+\alpha}{2q}$. Using the concentration compactness principles by Lions in \cite{Lions1984concentration} we deduce the following for $\frac{p}{2}\left(\frac{N+\alpha}{N}\right)<q<\bar{q}_s:=\frac{p}{2}\left(\frac{N+\alpha+sp}{N}\right)$:
	\begin{mytheorem}\label{Theorem 1.2}
		For $0<s<1<p<N $, and $\frac{p}{2}\left(\frac{N+\alpha}{N}\right)<q<\bar{q}_s$, there exists $\tau_0>0$ such that, for all $\tau<\tau_0$, the problem \eqref{1.1}-\eqref{1.2} has a radially symmetric ground state solution $u^*_{\tau}\in W^{1,p}(\mathbb{R}^N)$ corresponding to some $\lambda<0$ with $E(u^*_{\tau})<0$.
	\end{mytheorem}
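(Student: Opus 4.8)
The plan is to realize $u^*_\tau$ as a constrained minimizer of the reduced functional $E(u)=\frac{1}{p}\|\nabla u\|_p^p+\frac{1}{p}[u]_{s,p}^p-\frac{1}{p^*}\|u\|_{p^*}^{p^*}-\frac{1}{2q}A(u)$ over the radial Pohozaev set $P(\tau)\cap W^{1,p}_r(\mathbb{R}^N)$; here $E$ is $F$ with the $L^p$-term dropped, which is legitimate since that term equals the constant $\frac{\lambda}{p}\tau$ on $S(\tau)$. The first step is to record the scaling law of each ingredient under the fiber map. A change of variables gives $\|\nabla(t\star u)\|_p^p=e^{pt}\|\nabla u\|_p^p$, $[t\star u]_{s,p}^p=e^{spt}[u]_{s,p}^p$, $\|t\star u\|_{p^*}^{p^*}=e^{p^*t}\|u\|_{p^*}^{p^*}$ and $A(t\star u)=e^{2q\gamma_q t}A(u)$, while $\|t\star u\|_p^p=\|u\|_p^p$, so the fiber stays on $S(\tau)$. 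Consequently $g_u(t):=E(t\star u)$ is a sum of four exponentials whose exponents satisfy the strict ordering $0<2q\gamma_q<sp<p<p^*$, where $2q\gamma_q<sp$ is exactly the hypothesis $q<\bar{q}_s$ and $2q\gamma_q>0$ is the lower bound $q>\frac{p}{2}\frac{N+\alpha}{N}$; moreover $g_u'(t)=M(t\star u)$, so $t\star u\in P(\tau)$ precisely when $t$ is a critical point of $g_u$.

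Next I would analyze the geometry of $g_u$. Since the term of smallest exponent dominates as $t\to-\infty$ and carries a negative sign, $g_u(t)\to 0^-$, while the critical term forces $g_u(t)\to-\infty$ as $t\to+\infty$, and $g_u'<0$ near both ends. The positive kinetic terms produce an intermediate interval on which $g_u'>0$, so $g_u$ has a local minimum $t_u^-$ with $g_u(t_u^-)<g_u(-\infty)=0$ (and a local maximum $t_u^+$, which plays no role for the ground state). This shows at once that $P(\tau)\neq\emptyset$ and that $m(\tau):=\inf_{P(\tau)}E<0$. Boundedness from below and coercivity of $E$ on $P(\tau)$ follow by substituting $M(u)=0$ to get $E(u)=\frac1N\|\nabla u\|_p^p+(\frac1p-\frac{s}{p^*})[u]_{s,p}^p+(\frac{\gamma_q}{p^*}-\frac{1}{2q})A(u)$, the first two coefficients being positive, and then controlling $A(u)$ via the Hardy--Littlewood--Sobolev inequality (\autoref{prop1.1}) followed by the Gagliardo--Nirenberg inequality with interpolation exponent $\gamma_q$, giving $A(u)\le C\mu\,\tau^{\frac{2q(1-\gamma_q)}{p}}\|\nabla u\|_p^{2q\gamma_q}$ with $2q\gamma_q<p$; this renders $E$ coercive on $P(\tau)$ for each fixed $\tau$.

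The heart of the argument, and the main obstacle, is the compactness of a minimizing sequence $\{u_n\}\subset P(\tau)\cap W^{1,p}_r(\mathbb{R}^N)$. Coercivity yields boundedness, hence a weak limit $u^*$ in $W^{1,p}_r$; passing to radial decreasing rearrangements lets me assume each $u_n\ge 0$ is radial (rearrangement lowers $\|\nabla u\|_p$ and $[u]_{s,p}$, preserves $\|u\|_p$ and $\|u\|_{p^*}$, and does not decrease the Choquard energy), which also justifies restricting to $W^{1,p}_r$. The compact embedding of $W^{1,p}_r(\mathbb{R}^N)$ into $L^r(\mathbb{R}^N)$ for $p<r<p^*$ handles all subcritical ingredients; in particular $A(u_n)\to A(u^*)$ because the relevant exponent $r=\frac{2Nq}{N+\alpha}$ is strictly subcritical under $q<\bar{q}_s<\frac{p}{2}\frac{N+\alpha}{N-p}$. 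The only non-compact term is $\|u_n\|_{p^*}^{p^*}$, and to control it I would use the second concentration--compactness principle of Lions \cite{Lions1984concentration} together with a Brezis--Lieb splitting, showing that for $\tau<\tau_0$ small the level $m(\tau)$ lies strictly below the Sobolev-bubble threshold $\frac1N S^{N/p}$, with $S$ the best constant in $W^{1,p}\hookrightarrow L^{p^*}$. This strict gap excludes both vanishing and concentration of critical mass, forces $\|u_n\|_{p^*}\to\|u^*\|_{p^*}$ and hence strong convergence, so $u^*\in P(\tau)$ attains $m(\tau)$ with $E(u^*)=m(\tau)<0$.

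Finally, since $u^*$ minimizes $E$ on $S(\tau)$ with the Pohozaev set a natural constraint (the fiber critical point being nondegenerate, $g_{u^*}''\neq 0$), the Lagrange multiplier rule produces $\lambda\in\mathbb{R}$ with $E'(u^*)=\frac{\lambda}{p}\big(\|\cdot\|_p^p\big)'(u^*)$, so $u^*=u^*_\tau$ solves \eqref{1.1}--\eqref{1.2} weakly. Testing the equation with $u^*$ gives the identity $\|\nabla u^*\|_p^p+[u^*]_{s,p}^p-\|u^*\|_{p^*}^{p^*}-A(u^*)=\lambda\tau$; subtracting $M(u^*)=0$ leaves $\lambda\tau=(1-s)[u^*]_{s,p}^p-(1-\gamma_q)A(u^*)$. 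The sign $\lambda<0$ then follows by combining this with the constraint form of $E(u^*)<0$ (which forces $A(u^*)$ to outweigh $[u^*]_{s,p}^p$, using $\frac{\gamma_q}{p^*}-\frac{1}{2q}<0$) and the elementary inequality $(1-s)\big(\tfrac{1}{2q}-\tfrac{\gamma_q}{p^*}\big)<(1-\gamma_q)\big(\tfrac1p-\tfrac{s}{p^*}\big)$, which holds throughout $\frac{p}{2}\frac{N+\alpha}{N}<q<\bar{q}_s$. The delicate point, as indicated, is precisely the energy estimate $m(\tau)<\frac1N S^{N/p}$ for small $\tau$, since the failure of compactness of the critical embedding is the sole obstruction; everything else reduces to the subcritical, radial, coercive framework.
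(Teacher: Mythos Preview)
Your route differs substantially from the paper's. The paper does \emph{not} minimize on the radial Pohozaev set; it minimizes $E$ over the potential well $\Lambda(\tau)=\{u\in S(\tau):T(u)<r_0\}$ (with $r_0$ coming from Lemma~4.1, which is where $\tau_0$ actually enters), and runs Lions' concentration--compactness with translations. Dichotomy is excluded via the strict subadditivity $m(\tau)<m(\tau_1)+m(\tau_2)$ (Lemma~4.4) and continuity of $\tau\mapsto m(\tau)$ (Lemma~4.5); the critical term is controlled not by a threshold estimate but by the well structure $T(v_n)<r_0$, which gives $\tfrac{1}{p}T(v_n)-\tfrac{1}{p^*}\|v_n\|_{p^*}^{p^*}\ge\beta_0 T(v_n)$ with $\beta_0>0$. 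Only after obtaining a (non-radial) minimizer is the symmetric-decreasing rearrangement taken.

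Your outline has a genuine gap. The sentence ``the only non-compact term is $\|u_n\|_{p^*}^{p^*}$'' is false: the embedding $W^{1,p}_r(\mathbb{R}^N)\hookrightarrow L^p(\mathbb{R}^N)$ is \emph{not} compact, so nothing you wrote prevents $\|u^*\|_p^p<\tau$, and hence $u^*\in P(\tau)$ is unproven. This is exactly the dichotomy scenario that the paper's subadditivity lemma is designed to kill, and your scheme has no substitute for it. Relatedly, the threshold argument you sketch presupposes $M(u^*)=0$ in order to split $M(u_n)=0$ via Brezis--Lieb into $M(u^*)+(\|\nabla v_n\|_p^p+s[v_n]_{s,p}^p-\|v_n\|_{p^*}^{p^*})=o(1)$; but for a bare minimizing sequence on $P(\tau)$ (as opposed to a Palais--Smale sequence) the weak limit need not satisfy the Pohozaev identity, so the dichotomy $\ell=0$ or $\ell\ge S^{N/p}$ for the remainder does not follow as written. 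In the paper's Case~2 this works because the sequence is Palais--Smale and $u^*$ solves the equation, whence $M(u^*)=0$ by Lemma~3.1; that mechanism is absent here.

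A minor point: your derivation of $\lambda<0$ through the auxiliary inequality $(1-s)\big(\tfrac{1}{2q}-\tfrac{\gamma_q}{p^*}\big)<(1-\gamma_q)\big(\tfrac1p-\tfrac{s}{p^*}\big)$ is unnecessarily delicate. The paper simply writes $\lambda\tau=pE(u^*)+\big(\tfrac{p}{2q}-1\big)A(u^*)+\big(\tfrac{p}{p^*}-1\big)\|u^*\|_{p^*}^{p^*}$, and each summand is negative since $E(u^*)=m(\tau)<0$, $2q>p$ and $p^*>p$.
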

	\noindent Further, for the case of $\bar{q}_s\leq q  <\frac{p}{2}\left(\frac{N+\alpha}{N-p}\right)$,
	$\epsilon$ - blow up analysis was used to conclude the following :
	\begin{mytheorem}\label{Theorem 1.3}
		Let $0<s<1<p<N$ and $\mu>0$ is sufficiently large, then we have the following:
		\begin{enumerate}
			\item if $q=\bar{q}_s$, then there exists, $\bar{\tau}_s>0$ such that for all $\tau\in (0,\bar{\tau}_s)$, \eqref{1.1}-\eqref{1.2} has a solution $(u_{\tau},\lambda_{\tau})\in W^{1,p}_r(\mathbb{R}^N)\times \mathbb{R}$ for some $\lambda<0$,
			\item  if $\bar{q}_s<q\leq\bar{q}:=\frac{p}{2}\left(\frac{N+\alpha+p}{N}\right)$, then there exists, $\bar{\tau}_q>0$ such that for all $\tau\in (0,\bar{\tau}_q)$, \eqref{1.1}-\eqref{1.2} has a solution $(u_{\tau},\lambda_{\tau})\in W^{1,p}_r(\mathbb{R}^N)\times \mathbb{R}$  for some $\lambda<0$,
			\item  for all $\bar{q}<q<\frac{p}{2}\left(\frac{N+\alpha}{N-p}\right)$, \eqref{1.1}-\eqref{1.2} has a solution $(u_\tau,\lambda_{\tau})\in W_r^{1,p}(\mathbb{R}^N)\times \mathbb{R}$ for some $\lambda<0$.
		\end{enumerate}
	\end{mytheorem}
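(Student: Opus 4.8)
The plan is to realize a solution of \eqref{1.1}--\eqref{1.2} as a constrained critical point of the reduced energy
$$E(u)=\frac1p\|\nabla u\|_p^p+\frac1p[u]_{s,p}^p-\frac1{p^*}\|u\|_{p^*}^{p^*}-\frac1{2q}A(u)$$
on $S(\tau)$, the multiplier $\lambda$ appearing through the resulting Euler--Lagrange equation. The entire argument is governed by the fiber map $g_u(t):=E(t\star u)$. A direct computation with the $L^p$-preserving dilation $t\star u(x)=e^{Nt/p}u(e^tx)$ gives
$$g_u(t)=\frac1p e^{pt}\|\nabla u\|_p^p+\frac1p e^{spt}[u]_{s,p}^p-\frac1{p^*}e^{p^*t}\|u\|_{p^*}^{p^*}-\frac1{2q}e^{2q\gamma_q t}A(u),$$
so that $M(u)=g_u'(0)$ and $P(\tau)$ is exactly the set on which each fiber is critical. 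The four competing exponents are $sp<p<p^*$ together with $2q\gamma_q=\tfrac{2Nq}{p}-(N+\alpha)$, and the three cases of the theorem are precisely the three positions of $2q\gamma_q$: one checks $2q\gamma_q=sp$ when $q=\bar q_s$, $sp<2q\gamma_q\le p$ when $\bar q_s<q\le\bar q$, and $p<2q\gamma_q<\tfrac{(N+\alpha)p}{N-p}$ when $\bar q<q<\tfrac p2\tfrac{N+\alpha}{N-p}$. I would work throughout in the radial space $W^{1,p}_r(\mathbb{R}^N)$, both to recover compactness of the subcritical embeddings (Strauss) and to deliver a radially symmetric solution.

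First I would study $g_u$ for $u\in S(\tau)$. Since $p^*$ is the largest exponent, $g_u(t)\to-\infty$ as $t\to+\infty$, while as $t\to-\infty$ the slowest mode $e^{spt}$ dominates and $g_u(t)\to0^+$ provided its coefficient is positive. In cases (1) and (2) this positivity and the existence of a unique nondegenerate interior maximum $t_u$ is secured only for $\tau$ small (using \autoref{prop1.1} to make $A(u)$ small relative to $[u]_{s,p}^p$), which is the origin of the thresholds $\bar\tau_s,\bar\tau_q$; in case (3) the strictly supercritical scaling $2q\gamma_q>p$ makes the barrier structure automatic for every $\tau$. The map $u\mapsto t_u\star u$ then projects $S(\tau)$ onto $P(\tau)$, on which $E$ is bounded below, and the candidate level is $m(\tau):=\inf_{P(\tau)}E$, equivalently a mountain-pass level on $S(\tau)$. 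Using Jeanjean's device of adjoining the scaling variable, i.e. applying Ekeland's principle to the augmented functional $(u,t)\mapsto E(t\star u)$ on $S(\tau)\times\mathbb{R}$, I would produce a Palais--Smale sequence $(u_n)$ for $E|_{S(\tau)}$ at level $m(\tau)$ that additionally satisfies $M(u_n)\to0$, hence is asymptotically on the Pohozaev manifold.

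The sequence $(u_n)$ is bounded in $W^{1,p}_r(\mathbb{R}^N)$ by combining $M(u_n)\to0$ with the energy bound; passing to a weak limit $u_\tau$, the only obstruction to strong convergence is the critical term $\|u_n\|_{p^*}^{p^*}$, along which a concentrating Talenti bubble may split off (the Choquard term, being strictly HLS-subcritical for $q$ below the upper endpoint, is compact on radial functions). A Brezis--Lieb decomposition then shows compactness holds as soon as $m(\tau)<c^*:=\tfrac1N S^{N/p}$, with $S$ the best constant in $\|\nabla u\|_p^p\ge S\|u\|_{p^*}^p$. The decisive and hardest step is this strict inequality, and it is where the $\epsilon$-blow-up analysis enters: I would insert the cut-off Aubin--Talenti extremals $U_\epsilon=\eta\,\epsilon^{-(N-p)/p}U(\cdot/\epsilon)$ as test functions, compute $\|\nabla U_\epsilon\|_p^p=S^{N/p}+O(\epsilon^{(N-p)/(p-1)})$ and $\|U_\epsilon\|_{p^*}^{p^*}=S^{N/p}+o(1)$, together with the far less standard asymptotics for the concentrating bubble's fractional seminorm $[U_\epsilon]_{s,p}^p$ and its Choquard energy $A(U_\epsilon)$, and then maximise $g_{U_\epsilon}$ in $t$. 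Because the Choquard and fractional corrections enter the optimised energy with a negative sign, and $\mu$ is large while (in cases (1)--(2)) $\tau$ is small, the leading critical value $c^*$ is strictly lowered, giving $m(\tau)<c^*$. Controlling the interaction of the \emph{local} critical bubble with both the \emph{nonlocal} fractional seminorm and the Choquard energy, which scale differently in $\epsilon$, is the principal technical obstacle.

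With $m(\tau)<c^*$ the Palais--Smale sequence converges strongly to a nontrivial $u_\tau\in S(\tau)\cap P(\tau)$ realizing $m(\tau)$; it is a weak solution of \eqref{1.1} for a multiplier $\lambda_\tau$ obtained by testing the equation against $u_\tau$. To fix the sign, I would combine the Nehari identity $\|\nabla u_\tau\|_p^p+[u_\tau]_{s,p}^p=\lambda_\tau\tau+\|u_\tau\|_{p^*}^{p^*}+A(u_\tau)$ with the Pohozaev relation $M(u_\tau)=0$, eliminating the critical term to obtain
$$\lambda_\tau\,\tau=(1-s)\,[u_\tau]_{s,p}^p-(1-\gamma_q)\,A(u_\tau),$$
where $0<\gamma_q<1$ for $q$ below the upper critical exponent; since $\mu$ is large, $A(u_\tau)=\mu\mathcal{A}_q(u_\tau)$ dominates, forcing $\lambda_\tau<0$ as claimed. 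Finally, positivity and radial symmetry follow by replacing $u_\tau$ with its Schwarz symmetrisation, which does not increase $E$ while preserving the constraint, so the minimiser may be taken radially symmetric. This produces $(u_\tau,\lambda_\tau)\in W^{1,p}_r(\mathbb{R}^N)\times\mathbb{R}$ in all three cases, the thresholds $\bar\tau_s$ and $\bar\tau_q$ arising exactly from the level estimate of the previous paragraph for cases (1) and (2), with no restriction on $\tau$ needed in case (3).
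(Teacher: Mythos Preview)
Your proposal follows the paper's route: fiber-map geometry on $S_r(\tau)$, Jeanjean's augmented functional to produce a Palais--Smale sequence with $M(u_n)\to0$, a Brezis--Lieb splitting governed by the threshold $\tfrac1N S^{N/p}$, and an $\epsilon$-blow-up with cut-off Talenti profiles to push the mountain-pass level below that threshold; the sign of $\lambda$ is obtained from the identity $\lambda_\tau\tau=(1-s)[u_\tau]_{s,p}^p-(1-\gamma_q)A(u_\tau)$ exactly as you write.

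Two imprecisions deserve correction. First, you misplace the origin of the smallness thresholds. For $q=\bar q_s$ the restriction $\tau<\bar\tau_s$ is indeed needed at the fiber-map stage, since $2q\gamma_q=sp$ and the coefficient of $e^{spt}$ in $g_u$ must stay positive. But for $\bar q_s<q\le\bar q$ the fiber geometry, the uniqueness of $t_u$, and the level estimate all hold for \emph{every} $\tau$, because the slowest mode is $e^{spt}$ with strictly positive coefficient. The paper's threshold $\bar\tau_q$ enters only in the \emph{boundedness} of the Palais--Smale sequence: with $2q\gamma_q\le p$ one controls $E(u_n)-\tfrac1{p^*}M(u_n)$ from below by a multiple of $T(u_n)$ via \eqref{G_N}, and the resulting coefficient is positive precisely when $\tau<\bar\tau_q$. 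Your one-line ``bounded by combining $M(u_n)\to0$ with the energy bound'' hides exactly the step where the smallness is consumed, and your final sentence attributing the thresholds to the level estimate is wrong in both cases. Second, in the blow-up analysis the fractional correction $[v_\epsilon]_{s,p}^p$ enters the energy with a \emph{positive} sign; the actual task (the paper's Lemma~\ref{Lemma 5.7}) is to show that the negative Choquard contribution dominates it as $\epsilon\to0$, not that both corrections are negative. Since you already work in $W^{1,p}_r$, the closing Schwarz-symmetrisation step is also superfluous.
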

	\subsection*{Notations}
	We have used the following notations throughout the paper:
	\begin{itemize}
		\item $T(u)=\left\| \nabla u \right\|_p^p+[u]_{s,p}^p$
		\item 
		$S$ is the best Sobolev constant, see \cite{Talenti1976}, given by:
		\begin{equation}\label{S}
			S=\inf_{u\in W^{1,p}(\mathbb{R}^N)\setminus \{0\}} \frac{\left\| \nabla u \right\|_p^p}{\left\| u\right\|_{p^*}^p}
		\end{equation}
		and is achieved by
		the family of functions of the form:
		\begin{equation}\label{U_epsilon}
			U_{\epsilon,x_0}(x)=\frac{K_{N,p}\epsilon^{\frac{N-p}{p(p-1)}}}{(\epsilon^{\frac{p}{p-1}}+|x-x_0|^{\frac{p}{p-1}})^{\frac{N-p}{p}}},\text{ for } x_0\in \mathbb{R}^N \text{ and } \epsilon>0.
		\end{equation}
		\item $p_{\alpha}^*:=\frac{p}{2}\left(\frac{N+\alpha}{N-p}\right)$.
	\end{itemize}
			\section{Regularity Result}
	\noindent Let us start with dicussing the regularity properties of a positive radial solution to \eqref{1.1}-\eqref{1.2}. 
	\begin{myproof}{Theorem}{\ref{Theorem 1.1}}
		For $0<\epsilon<1$, define $h_{\epsilon}(t):= \sqrt{\epsilon^2+t^2}$, clearly, $g_{\epsilon}:=h'_{\epsilon}$ is a continuously differentiable convex function, with $g_{\epsilon}(0)=0$ and $|g'_{\epsilon}(t)|\leq M_{\epsilon}$ for some $M_{\epsilon}>0$, thus by Theorem 2.2.3 of  \cite{Kesavan2019Topics}, $g_{\epsilon}(u)\in W^{1,p}(\mathbb{R}^N)$. Let $0<\phi\in C_c^{\infty}(\mathbb{R}^N)$ be arbitrary, now define 
		$\zeta := \phi |g_{\epsilon}(u)|^{p-2}g_{\epsilon}(u)\in W^{1,p}(\mathbb{R}^N)$, then we have: 
		\begin{equation}\label{2.1}
			\int_{\mathbb{R}^N}|\nabla u|^{p-2}\nabla u \nabla \zeta =	(p-1)\int_{\mathbb{R}^N}|\nabla u|^p\phi|g_{\epsilon}(u)|^{p-2}g'_{\epsilon}+\int_{\mathbb{R}^N}|\nabla u|^{p-2}|g_{\epsilon}(u)|^{p-2}g_{\epsilon}(u)\nabla u\nabla \phi:= I_{1}^{\epsilon},
		\end{equation}
		\begin{eqnarray}\label{2.2}
			\ll u, \zeta \gg_{s,p} & = & \int_{\mathbb{R}^N}\int_{\mathbb{R}^N}\frac{|u(x)-u(y)|^{p-2}(u(x)-u(y))(\zeta(x)-\zeta(y))}{|x-y|^{N+sp}}dxdy\nonumber\\
			& \geq & \int_{\mathbb{R}^N} \int_{\mathbb{R}^N}\frac{|h_{\epsilon}(u(x))-h_{\epsilon}(u(y))|^{p-2}(h_{\epsilon}(u(x))-h_{\epsilon}(u(y)))(\phi(x)-\phi(y))}{|x-y|^{N+sp}}\nonumber\\
			& := & I_2^{\epsilon}
		\end{eqnarray}
		by Lemma 3.2 of \cite{Biswas2023Regularity}. Taking $\zeta$ as test function, by \eqref{2.1} and \eqref{2.2} we get:
		\begin{eqnarray}\label{2.3}
			I_1^{\epsilon}+I_2^{\epsilon}+(-\lambda)\int_{\mathbb{R}^N}|u|^{p-2}u \zeta & \leq & \int_{\mathbb{R}^N}|\nabla u|^{p-2} \nabla u \nabla \zeta + \ll u,\zeta \gg_{s,p}-\lambda\int_{\mathbb{R}^N}|u|^{p-2}u \zeta\nonumber\\
			& = & \int_{\mathbb{R}^N}|u|^{p^*-2}u\zeta +\mu\int_{\mathbb{R}^N}(I_{\alpha}*|u|^q)|u|^{q-2}u\zeta\nonumber\\
			& \leq & \int_{\mathbb{R}^N}|u|^{p^*-2}u\phi +\mu\int_{\mathbb{R}^N}(I_{\alpha}*|u|^q)|u|^{q-2}u \phi,
		\end{eqnarray}
		since $|g_{\epsilon}(t)|<1$. Now, taking $\epsilon\rightarrow 0$ in \eqref{2.3}, by dominated convergence theorem, we get :
		\begin{equation}\label{2.4}
			\int_{\mathbb{R}^N}|\nabla u|^{p-2}\nabla u \nabla \phi +\ll |u|, \phi \gg_{s,p} \leq \lambda \int_{\mathbb{R}^N}|u|^{p-2}u \phi+\int_{\mathbb{R}^N}|u|^{p^*-2}u\phi +\mu\int_{\mathbb{R}^N}(I_{\alpha}*|u|^q)|u|^{q-2}u \phi
		\end{equation}
		for all $0<\phi \in C_c^{\infty}(\mathbb{R}^N)$ and hence by density, for all $0<\phi \in W^{1,p}(\mathbb{R}^N)$. Now, for a fixed $\gamma>0$, define $u_{\gamma}:=\min\{ \gamma, |u|\}$ and for some $k > 1$, set $\beta=kp-p+1$. Replacing $\phi$ by $u_{\gamma}^{\beta}$ in \eqref{2.4}, then by Lemma 3.1 of \cite{Biswas2023Regularity} we get:
		\begin{eqnarray*}
			\frac{\beta p^p}{(\beta+p-1)^p}\left(\left\| \nabla(u_{\gamma}^{k})\right\|_p^p+[u_{\gamma}^k]_{s,p}^p\right) & \leq & \beta \int_{\{x: |u(x)|<\gamma\}}u_{\gamma}^{\beta-1}|\nabla u_{\gamma}|^p+ \ll |u|, u_{\gamma}^{\beta} \gg_{s,p}\\
			& = & \int_{\mathbb{R}^N}|\nabla u|^{p-2}\nabla u \nabla u_{\gamma}^{\beta} +\ll |u|, u_{\gamma}^{\beta} \gg_{s,p}\\
			& \leq & \lambda \int_{\mathbb{R}^N}|u|^{p-2}u u_{\gamma}^{\beta}+\int_{\mathbb{R}^N}|u|^{p^*-2}u u_{\gamma}^{\beta}\\
			&& +\mu\int_{\mathbb{R}^N}(I_{\alpha}*|u|^q)|u|^{q-2}u u_{\gamma}^{\beta}.
		\end{eqnarray*}
		Now, since $u>0$ and $\lambda<0$ we get
		$$\frac{\beta}{k^p}\left(\left\| \nabla u_{\gamma}^k\right\|_p^p+[u_{\gamma}^k]_{s,p}^p\right)+(-\lambda)\left\| u_{\gamma}^k\right\|_p^p\leq \int_{\mathbb{R}^N}|u|^{p^*-2}u u_{\gamma}^{\beta}+\mu\int_{\mathbb{R}^N}(I_{\alpha}*|u|^q)|u|^{q-2}u u_{\gamma}^{\beta},$$
		and hence 
		$$\frac{C\beta}{k^p}\left\| u_{\gamma}^k\right\|^p\leq \int_{\mathbb{R}^N}|u|^{p^*-2}u u_{\gamma}^{\beta}+\mu\int_{\mathbb{R}^N}(I_{\alpha}*|u|^q)|u|^{q-2}u u_{\gamma}^{\beta},$$
		here $C=\min\{1, \frac{(-\lambda)k^p}{\beta}\}$. Thus, by the imbedding $W^{1,p}(\mathbb{R}^N)\hookrightarrow L^{p^*}(\mathbb{R}^N)$, there exists a constant $C_1$ such that
		\begin{equation}\label{2.5}
			\left\| u_{\gamma}^k\right\|_{p^*}^{p}\leq C_1 k^{p-1}\left(\int_{\mathbb{R}^N}|u|^{p^*-2}uu_{\gamma}^{\beta}+\mu\int_{\mathbb{R}^N}(I_{\alpha}*|u|^q)|u|^{q-2}u u_{\gamma}^{\beta}\right),
		\end{equation}
		because, $\beta>k$ for $k>1$. Now, for some $\delta>1$, by \autoref{prop1.1}, H$\ddot{\text{o}}$lder's inequality, and the fact that $(a+b)^c\leq a^c+b^c$ whenever $a$ , $b>0$ and $c<1$, we have:
		\begin{eqnarray}\label{2.6}
			&&\mu\int_{\mathbb{R}^N}(I_{\alpha}*|u|^q)|u|^{q-2}u u_{\gamma}^{\beta} \nonumber\\
			&&\leq  \mu C_N\left(\int_{\mathbb{R}^N}|u|^{\frac{2qN}{N+\alpha}}\right)^{\frac{N+\alpha}{2N}}\left(\int_{\mathbb{R}^N}(|u|^{q-2}|uu_{\gamma}^{\beta}|)^{\frac{2N}{N+\alpha}}\right)^{\frac{N+\alpha}{2N}} \nonumber\\
			&& \leq  \bar{C}_N\left(\int_{\{x: |u(x)|<\delta\}}(|u(x)|^{q-2}|uu_{\gamma}^{\beta}|)^{\frac{2N}{N+\alpha}}+\int_{\{x |u(x)|\geq\delta\}}(|u|^{q-2}|uu_{\gamma}^{\beta}|)^{\frac{2N}{N+\alpha}}\right)^{\frac{N+\alpha}{2N}}\nonumber\\
			&& \leq  \bar{C}_N\left(\int_{\{x: |u(x)|<\delta\}}(|u|^{q-2}|uu_{\gamma}^{\beta}|)^{\frac{2N}{N+\alpha}}\right)^{\frac{N+\alpha}{2N}}+\bar{C}_N\left(\int_{\{x: |u(x)|\geq\delta\}}(|u|^{q-2}|uu_{\gamma}^{\beta}|)^{\frac{2N}{N+\alpha}}\right)^{\frac{N+\alpha}{2N}}\nonumber\\
			&& \leq \bar{C}_N\left(\int_{\{x: |u(x)|<\delta\}}(|u|^{q+kp-p})^{\frac{2N}{N+\alpha}}\right)^{\frac{N+\alpha}{2N}}+\bar{C}_N\left(\int_{\{x: |u(x)|\geq\delta\}}(|u|^{q+kp-p}|)^{\frac{2N}{N+\alpha}}\right)^{\frac{N+\alpha}{2N}}\nonumber\\
			&& \leq \bar{C}_N\delta^{q-p}\left(\int_{\{x: |u(x)|<\delta\}}|u|^{\frac{kpp^*}{p^*_{\alpha}}}\right)^{\frac{p^*_{\alpha}}{p^*}}+\bar{C}_N\left(\int_{\{x |u(x)|\geq\delta\}}(|u|^{p^*_{\alpha}-p}|u|^{kp})^{\frac{p^*}{p^*_{\alpha}}}\right)^{\frac{p^*_{\alpha}}{p^*}}\nonumber\\
			&& \leq \bar{C}_N \delta^{q-p}\left\| u \right\|_{\frac{kpp^*}{p^*_{\alpha}}}^{kp} +\bar{C}_N\left(\int_{\{x |u(x)|\geq\delta\}}|u|^{kp^*}\right)^{\frac{p}{p^*}}\left(\int_{\{x |u(x)|\geq\delta\}}|u|^{p^*}\right)^{\frac{p^*_{\alpha}-p}{p^*}}\nonumber\\
			&& \leq \bar{C}_N\delta^{q-p}\left\| u \right\|_{\frac{kpp^*}{p^*_{\alpha}}}^{kp}+\bar{C}_NC(\delta)^{\frac{p^*_{\alpha}-p}{p^*}}\left\| u \right\|_{kp^*}^{kp},
		\end{eqnarray} 
		and
		\begin{eqnarray}\label{2.7}
			\int_{\mathbb{R}^N}|u|^{p^*-2}uu_{\gamma}^{\beta} & \leq & \int_{\mathbb{R}^N}|u|^{p^*+\beta-1}= \int_{\mathbb{R}^N}|u|^{p^*+kp-p}\nonumber\\
			& = & \int_{\{x: |u(x)|<\delta\}}|u|^{p^*+kp-p}+\int_{\{x: |u(x)|\geq\delta\}}|u|^{p^*+kp-p}\nonumber\\
			& \leq &  \left(\int_{\{x: |u(x)|<\delta\}}|u|^{\frac{kpp^*}{p^*_{\alpha}}}\right)^{\frac{p^*_{\alpha}}{p^*}}\left(\int_{\{x: |u(x)|<\delta\}}|u|^{\frac{(p^*-p)p^*}{p^*-p^*_{\alpha}}}\right)^{\frac{p^*-p^*_{\alpha}}{p^*}}\nonumber\\
			& & + \left(\int_{\{x |u(x)|\geq\delta\}}|u|^{kp^*}\right)^{\frac{p}{p^*}}\left(\int_{\{x |u(x)|\geq\delta\}}|u|^{p^*}\right)^{\frac{p^*-p}{p^*}}\nonumber\\
			& \leq &  \left\| u \right\|_{\frac{kpp^*}{p^*_{\alpha}}}^{kp}\left(\int_{\{x: |u(x)|<\delta\}}|u|^{\frac{(p^*-p^*_{\alpha}+p^*_{\alpha}-p)p^*}{p^*-p^*_{\alpha}}}\right)^{\frac{p^*-p^*_{\alpha}}{p^*}}+C(\delta)^{\frac{p^*-p}{p^*}}\left\| u \right\|_{kp^*}^{kp}\nonumber\\
			& \leq & C'\delta^{p^*_{\alpha}-p}\left\| u \right\|_{\frac{kpp^*}{p^*_{\alpha}}}^{kp}+C(\delta)^{\frac{p^*-p}{p^*}}\left\| u \right\|_{kp^*}^{kp}.
		\end{eqnarray}
		Using \eqref{2.6} and \eqref{2.7} in \eqref{2.5} we get :
		\begin{eqnarray*}
			\left\| u_{\gamma}^k \right\|_{p^*}^{p} & \leq & C_1k^{p-1}\left(\bar{C}_N\delta^{q-p}+C'\delta^{p^*_{\alpha}-p}\right)\left\| u \right\|_{\frac{kpp^*}{p^*_{\alpha}}}^{kp}\\
			&& + C_1 k^{p-1}\left(\bar{C}_NC(\delta)^{\frac{p^*_{\alpha}-p}{p^*}}+C(\delta)^{\frac{p^*-p}{p^*}}\right)\left\| u \right\|_{kp^*}^{kp},
		\end{eqnarray*}
		taking $\delta>1$ large enough so that $ C'':=C_1 k^{p-1}\left(\bar{C}_NC(\delta)^{\frac{p^*_{\alpha}-p}{p^*}}+C(\delta)^{\frac{p^*-p}{p^*}}\right)<1$. Thus, Fatou's lemma gives us the following :
		\begin{eqnarray*}
			\left\| u \right\|_{kp^*}^{kp} & = & \left\| u^k \right\|_{p^*}^p \leq \liminf_{\gamma \rightarrow \infty} \left\| u_{\gamma}^k \right\|_{p^*}^{p}\\
			& \leq &C_1k^{p-1}\left(\bar{C}_N\delta^{q-p}+C'\delta^{p^*_{\alpha}-p}\right)\left\| u \right\|_{\frac{kpp^*}{p^*_{\alpha}}}^{kp}+ C''\left\| u \right\|_{kp^*}^{kp},
		\end{eqnarray*} 
		therefore,
		\begin{equation}\label{2.8}
			\left\| u \right\|_{kp^*} \leq \hat{C}^{\frac{1}{kp}}k^{\frac{p-1}{kp}}\left\| u \right\|_{\frac{kpp^*}{p^*_{\alpha}}} \leq k^{\frac{p-1}{kp}}\hat{C}^{\frac{1}{kp}}\left(1+\left\| u \right\|_{\frac{kpp^*}{p^*_{\alpha}}}^{kp}\right)^{\frac{1}{kp}},
		\end{equation}
		where $\hat{C}=\frac{C_1k^{p-1}\left(\bar{C}_N\delta^{q-p}+C'\delta^{p^*_{\alpha}-p}\right)}{1-C''}>1$. If there exists a sequence $\{k_n\}\rightarrow \infty$ as $n\rightarrow \infty$ such that $\left\| u \right\|_{\frac{k_npp^*}{p^*_{\alpha}}}\leq 1$ for all $n\in \mathbb{N}$. Then, for any bounded domain $\Omega\subset\mathbb{R}^N$, we have
		$$\left\| u \right\|_{L^{\frac{k_npp^*}{p^*_{\alpha}}}(\Omega)}\leq \left\| u \right\|_{\frac{k_npp^*}{p^*_{\alpha}}}\leq 1 \text{ for all } n\in \mathbb{N},$$
		taking $n\rightarrow \infty$, we get $u\in L^{\infty}(\Omega)$ and hence $u\in L^{\infty}_{loc}(\mathbb{R}^N)$. Now, suppose there does not exists any such sequence, then there exists $k_0>0$ such that
		$$\left\| u \right\|_{\frac{kpp^*}{p^*_{\alpha}}}^{\frac{kpp^*}{p^*_{\alpha}}}=\int_{\mathbb{R}^N}|u|^{\frac{kpp^*}{p^*_{\alpha}}}>1 \text{ for all } k\geq k_0,$$
		and hence by \eqref{2.8} we get 
		\begin{equation}\label{2.9}
			\left\|u \right\|_{kp^*}\leq (2\hat{C})^{\frac{1}{kp}}(k^{\frac{1}{k}})^{\frac{p-1}{p}}\left\| u \right\|_{\frac{kpp^*}{p^*_{\alpha}}} = (C_*^{\frac{1}{k}})^{\frac{1}{p}}(k^{\frac{1}{k}})^{\frac{p-1}{p}}\left\| u \right\|_{\frac{kpp^*}{p^*_{\alpha}}}\text{ for all } k\geq k_0,
		\end{equation}
		where $C_*=2\hat{C}>1$. For $k_1=\frac{k_0p^*_{\alpha}}{p}>k_0$, \eqref{2.9} gives us
		\begin{equation*}
			\left\| u \right\|_{k_1p^*} \leq (C_*^{\frac{1}{k_1}})^{\frac{1}{p}}(k_1^{\frac{1}{k_1}})^{\frac{p-1}{p}}\left\| u \right\|_{k_0p^*},
		\end{equation*}
		similarly for $k_2=\frac{k_1p^*_{\alpha}}{p}$ we have:
		$$\left\| u \right\|_{k_2p^*}\leq (c_*^{\frac{1}{k_1}+\frac{1}{k_2}})^{\frac{1}{p}}(k_1^{\frac{1}{k_1}}k_2^{\frac{1}{k_2}})^{\frac{p-1}{p}}\left\| u \right\|_{k_0p^*},$$
		proceeding in this manner, for $k_n=\frac{k_{n-1}p^*_{\alpha}}{p}=(\frac{p^*_{\alpha}}{p})^nk_0$ we have:
		\begin{equation}\label{2.10}
			\left\| u \right\|_{k_np^*}\leq \left(C_*^{\sum_{j=1}^{n}\frac{1}{k_j}}\right)^{\frac{1}{p}}\left(\prod_{j=1}^nk_j^{\frac{1}{k_j}}\right)^{\frac{p-1}{p}}\left\| u \right\|_{k_0p^*}.
		\end{equation}
		Now, since $k_j>1$ for all $j\in \mathbb{N}$, and $\displaystyle \lim_{j \rightarrow \infty}k_j^{\sqrt{\frac{1}{k_j}}}=1$, there must exist some $C^*>1$ such that $k_j^{\sqrt{\frac{1}{k_j}}}<c^*$ for all $j\in \mathbb{N}$, thus \eqref{2.10}, we get:
		\begin{eqnarray*}
			\left\| u \right\|_{k_np^*} & \leq &  \left(C_*^{\sum_{j=1}^{n}\frac{1}{k_j}}\right)^{\frac{1}{p}}\left((C^*)^{\sum_{j=1}^{n}\frac{1}{\sqrt{k_j}}}\right)^{\frac{p-1}{p}}\left\| u \right\|_{k_op^*}\\
			& \leq & \left(C_*^{\frac{k_0p}{p^*_{\alpha}-p}}\right)^{\frac{1}{p}}\left((C^*)^{\frac{\sqrt{k_0p}}{\sqrt{p^*_{\alpha}}-\sqrt{p}}}\right)^{\frac{p-1}{p}}\left\| u \right\|_{k_0p^*}=\tilde{C} \left\| u \right\|_{k_0p^*},
		\end{eqnarray*}		 		
		thus, for any bounded domain $\Omega$ in $\mathbb{R}^N$, we have:
		\begin{equation}\label{2.11}
			\left\| u \right\|_{L^{k_np^*}(\Omega)}\leq \left\| u \right\|_{k_np^*}\leq \tilde{C} \left\| u \right\|_{k_0p^*} \text{ for all } n\in \mathbb{N}.
		\end{equation}
		Now, since $k_n\rightarrow \infty$ as $n\rightarrow \infty$, then we must have $u\in L^{\infty}(\Omega)$, because, if not so, then there must exists a set $S\subset \Omega$ with positive measure and $\omega>0$, such that
		$$u(x)>\tilde{C}\left\| u \right\|_{k_0p^*}+\omega \text{ for all } x\in S,$$
		and hence,
		\begin{eqnarray*}
			\liminf_{n \rightarrow \infty} \left\| u \right\|_{L^{k_np^*}(\Omega)} & = & \liminf_{n \rightarrow \infty}\left(\int_{\Omega}|u|^{k_np^*}\right)^{\frac{1}{k_np^*}} \geq \liminf_{n \rightarrow \infty}\left(\int_{S}|u|^{k_np^*}\right)^{\frac{1}{k_np^*}}\\
			& > & \liminf_{n \rightarrow \infty}\left(\int_{S}|\tilde{C}\left\|u\right\|_{k_0p}+\omega|^{k_np^*}\right)^{\frac{1}{k_np^*}} = (\tilde{C} \left\| u \right\|_{k_0p^*}+\omega)|S|^{\frac{1}{k_np^*}}\\
			& = &\tilde{C} \left\| u \right\|_{k_0p^*}+\omega,
		\end{eqnarray*}
		but this is a contradiction to \eqref{2.11}. Thus, $u\in L^{\infty}_{loc}(\mathbb{R}^N)$. Now, if $u$ is radial, then by Strauss lemma \cite{Yuan2013radial} we have
		\begin{equation}\label{radial_estimate}
			|u(x)|\leq \frac{K \left\| u\right\|_{W^{1,p}(\mathbb{R}^N)}}{|x|^{\frac{N-1}{p}}} \text{ almost everywhere in }\mathbb{R}^N.
		\end{equation}
		The estimate in \eqref{radial_estimate} tells us that $|u(x)|\rightarrow 0$ as $|x|\rightarrow \infty$ and hence $u\in L^{\infty}(\mathbb{R}^N)$. Moreover, using \eqref{radial_estimate} we can see that, $u\in L^t_{loc}(\mathbb{R}^N)$ for all $t\geq 1$ and $u\in L^t(\mathbb{R}^N)$ for all $t\geq p$. Now, as done in \cite{Nidhi2023}, one can see that $(I_{\alpha}*|u|^q)\in L^{\infty}(\mathbb{R}^N)$ and hence $$f:= \lambda |u|^{p-2}u+|u|^{p^*-2}u+(I_{\alpha}*|u|^q)|u|^{q-2}u\in L^t(\mathbb{R}^N) \text{ for all } t\geq \max\{\frac{p}{p-1},\frac{p}{q-1}\}.$$
		Thus, by theorem 1.4 of \cite{Garain2023Higher} $u\in C^{\delta}_{loc}(\mathbb{R}^N)$ for every $0<\delta<\Theta=\min\{\frac{sp}{p-1},1\}$.
	\end{myproof}\\
	Following the establishment of the regularity of a radial solution, we shall examine its existence. To proceed, we require the following preliminary results:
	\section{Preliminaries for existence results}
	\begin{proposition}\label{prop 3.1}
		For all $u\in W^{1,p}(\mathbb{R}^N)$, we have:
		\begin{equation}\label{G_N}
			A(u)\leq C_N\left\| \nabla u \right\|_{p}^{2q\gamma_q}\left\| u\right\|_p^{2q(1-\gamma_q)},
		\end{equation}
		and
		\begin{equation}\label{fractionalGN}
			A(u) \leq C_N [u]_{s,p}^{\frac{2q\gamma_q}{s}}\left\| u\right\|_p^{\frac{2q(s-\gamma_q)}{s}}.
		\end{equation}
	\end{proposition}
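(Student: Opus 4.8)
The plan is to decouple the nonlocal double integral defining $A(u)$ from the differential quantities in two stages: first reduce $A(u)$ to a single Lebesgue norm of $u$ via the Hardy--Littlewood--Sobolev inequality, and then interpolate that norm against $\|u\|_p$ using a Gagliardo--Nirenberg-type inequality (the classical one for \eqref{G_N} and its fractional analogue for \eqref{fractionalGN}).

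First I would apply \autoref{prop1.1} with $f=h=|u|^q$ and the admissible pair $t=r=\frac{2N}{N+\alpha}$ (for which $\frac1t+\frac1r=1+\frac{\alpha}{N}$). This yields
\[
A(u)=\mu\,\mathcal{A}_q(u)\le \mu\,C(N,\alpha)\,\big\||u|^q\big\|_{\frac{2N}{N+\alpha}}^{2}=\mu\,C(N,\alpha)\,\|u\|_m^{2q},\qquad m:=\frac{2qN}{N+\alpha},
\]
so that estimating $A(u)$ is reduced to estimating $\|u\|_m$. The key piece of bookkeeping is the identity $\frac{N}{m}=\frac{N+\alpha}{2q}$, which, by the definition of $\gamma_q$, gives $\frac{N}{p}-\frac{N}{m}=\gamma_q$.

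For \eqref{G_N} I would invoke the classical Gagliardo--Nirenberg--Sobolev inequality $\|u\|_m\le C\|\nabla u\|_p^{\theta}\|u\|_p^{1-\theta}$, whose interpolation exponent is pinned down by testing on the dilations $u(\lambda\cdot)$: matching powers of $\lambda$ forces $-\frac{N}{m}=\theta\big(1-\frac{N}{p}\big)-(1-\theta)\frac{N}{p}$, i.e. $\theta=\frac{N}{p}-\frac{N}{m}=\gamma_q$. Raising to the power $2q$ and combining with the HLS bound gives \eqref{G_N}. For \eqref{fractionalGN} I would run the identical argument with the fractional Gagliardo--Nirenberg inequality $\|u\|_m\le C[u]_{s,p}^{\beta}\|u\|_p^{1-\beta}$; since $[u(\lambda\cdot)]_{s,p}=\lambda^{s-N/p}[u]_{s,p}$, the scaling now forces $\beta s=\frac{N}{p}-\frac{N}{m}=\gamma_q$, i.e. $\beta=\gamma_q/s$, whence $2q(1-\beta)=\frac{2q(s-\gamma_q)}{s}$, which is exactly \eqref{fractionalGN}.

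The only genuine point requiring care --- and the main obstacle --- is verifying that $m$ lies in the range where each interpolation inequality is valid (equivalently that $u\in L^m$ via the relevant Sobolev embedding) and that the interpolation weights are legitimate. For \eqref{G_N} one needs $p\le m\le p^*$ and $\gamma_q\in[0,1]$: the hypothesis $q\ge\frac{p}{2}\frac{N+\alpha}{N}$ gives $m\ge p$ and $\gamma_q\ge 0$, while $q\le p_\alpha^*$ gives $m\le p^*$ and $\gamma_q\le 1$ (with $\gamma_q=1$ exactly at $m=p^*$). For \eqref{fractionalGN} the governing embedding is $W^{s,p}\hookrightarrow L^{p_s^*}$ with $p_s^*=\frac{Np}{N-sp}$, so one instead needs $p\le m\le p_s^*$ and $\gamma_q/s\in[0,1]$; because $p_s^*<p^*$, this is the more restrictive requirement and holds precisely when $\gamma_q\le s$, i.e. $q\le\frac{p}{2}\frac{N+\alpha}{N-sp}$. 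I would check each of these endpoint conditions against the standing hypotheses on $q$ before citing the interpolation inequalities, since this is where the admissible range of $q$ is genuinely used.
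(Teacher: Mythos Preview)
Your proposal is correct and follows essentially the same two-step route as the paper: apply Hardy--Littlewood--Sobolev to reduce $A(u)$ to a Lebesgue norm of $u$, then interpolate via the classical (respectively fractional) Gagliardo--Nirenberg inequality. The only cosmetic difference is that the paper applies HLS with a general admissible pair $(r,t)$ and interpolates each factor $\|u^q\|_r$, $\|u^q\|_t$ separately before combining, whereas you take the symmetric choice $r=t=\tfrac{2N}{N+\alpha}$ from the start; since the final exponents depend only on $\tfrac1r+\tfrac1t$, the two computations coincide, and your explicit check of the admissible range for $m$ (in particular the sharper constraint $\gamma_q\le s$ needed for \eqref{fractionalGN}) is a useful addition that the paper leaves implicit.
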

	\begin{proof}
		For $r,t>1$ such that $\frac{1}{r}+\frac{1}{t}=1+\frac{\alpha}{N}$, by \autoref{prop1.1} and the interpolation inequality \cite{Fiorenza2021Gagliardo}, we get:
		\begin{eqnarray*}
			A(u) & \leq & \mu C(N,r,t)\left\| u^q\right\|_r\left\| u^q\right\|_t\\
			& \leq & \mu\bar{C}(N,r,t)\left(\left\| \nabla u \right\|_p^{\frac{N(qr-p)}{p}}\left\| u \right\|_p^{qr-\frac{N(qr-p)}{p}}\right)^{\frac{1}{r}}\left(\left\| \nabla u \right\|_p^{\frac{N(qt-p)}{p}}\left\| u \right\|_p^{qt-\frac{N(qt-p)}{p}}\right)^{\frac{1}{t}}\\
			& = & C_N\left\|\nabla u \right\|_p^{2q\gamma_q}\left\| u\right\|_p^{2q(1-\gamma_q)},
		\end{eqnarray*}
		where $C_N:=\mu \bar{C}(N,r,t)$.
		Similarly, by the fractional Gagliardo-Nirenberg inequality \cite{Park2011FractionalGN}, 
		\begin{eqnarray*}
			A(u) & \leq & \mu C(N,r,t)\left\| u^q\right\|_r\left\| u^q\right\|_t\\
			& \leq & \mu\bar{C}(N,r,t)\left([u]_{s,p}^{\frac{N(qr-p)}{ps}}\left\| u \right\|_p^{qr-\frac{N(qr-p)}{ps}}\right)^{\frac{1}{r}}\left([u]_{s,p}^{\frac{N(qt-p)}{ps}}\left\| u \right\|_p^{qr-\frac{N(qt-p)}{ps}}\right)^{\frac{1}{t}}\\
			& = & C_N [u]_{s,p}^{\frac{2q\gamma_q}{s}}\left\| u\right\|_p^{\frac{2q(s-\gamma_q)}{s}},
		\end{eqnarray*}
		where $C_N:=\mu \bar{C}(N,r,t)$.
	\end{proof}
	\begin{lemma}\label{Lemma 3.1}
		Let $N\geq 3, (u,\lambda)\in S(\tau)\times \mathbb{R}$ be a weak solution of \eqref{1.1}-\eqref{1.2}, then $u\in P(\tau)$.
	\end{lemma}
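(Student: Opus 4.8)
The plan is to read off $M(u)$ as the derivative at $t=0$ of the energy along the fiber $t\star u(x)=e^{Nt/p}u(e^tx)$, and then to argue that this derivative must vanish because $u$, being a weak solution, is a critical point of the energy constrained to $S(\tau)$, along a curve that stays on $S(\tau)$. First I would record the mass invariance: a change of variables $y=e^tx$ gives $\|t\star u\|_p^p=e^{Nt}\!\int|u(e^tx)|^p\,dx=\|u\|_p^p=\tau$ for every $t\in\mathbb{R}$, so the whole curve $t\mapsto t\star u$ lies on $S(\tau)$. This is exactly why the exponent $\tfrac Np$ is built into the fiber map, and it is what will kill the Lagrange-multiplier term at the end.

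Next I would compute how each homogeneous piece of the energy rescales. Working with
\[
J(u):=\tfrac1p\|\nabla u\|_p^p+\tfrac1p[u]_{s,p}^p-\tfrac1{p^*}\|u\|_{p^*}^{p^*}-\tfrac1{2q}A(u),
\]
which is $F$ with the $\lambda$-mass term (constant along the fiber) removed, the same change of variables yields $\|\nabla(t\star u)\|_p^p=e^{pt}\|\nabla u\|_p^p$, $[t\star u]_{s,p}^p=e^{spt}[u]_{s,p}^p$, $\|t\star u\|_{p^*}^{p^*}=e^{p^*t}\|u\|_{p^*}^{p^*}$, and $A(t\star u)=e^{2q\gamma_q t}A(u)$, the last exponent being precisely $2q\gamma_q=\tfrac{2qN}{p}-(N+\alpha)$. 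Thus $t\mapsto J(t\star u)$ is an explicit smooth function of $t$, and differentiating at $t=0$ reads off the coefficients $p,\ sp,\ p^*,\ 2q\gamma_q$ against the prefactors $\tfrac1p,\tfrac1p,\tfrac1{p^*},\tfrac1{2q}$, giving
\[
\frac{d}{dt}\Big|_{t=0}J(t\star u)=\|\nabla u\|_p^p+s[u]_{s,p}^p-\|u\|_{p^*}^{p^*}-\gamma_q A(u)=M(u).
\]

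Then I would bring in the hypothesis. Rearranging the weak formulation shows that $J'(u)[v]=\lambda\int_{\mathbb{R}^N}|u|^{p-2}uv$ for all $v\in W^{1,p}(\mathbb{R}^N)$ (the critical and Choquard terms cancel against $J'$). Writing $w:=\tfrac{d}{dt}(t\star u)|_{t=0}=\tfrac Np u+x\cdot\nabla u$ for the generator of the fiber, the chain rule would identify the scaling derivative above with $J'(u)[w]$, so that $M(u)=J'(u)[w]=\lambda\int_{\mathbb{R}^N}|u|^{p-2}u\,w$. But differentiating the constant $\|t\star u\|_p^p\equiv\tau$ gives $p\int_{\mathbb{R}^N}|u|^{p-2}u\,w=0$, whence $M(u)=\lambda\cdot 0=0$, i.e. $u\in S(\tau)\cap\bar M=P(\tau)$.

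The step that genuinely needs care, and which I expect to be the main obstacle, is the chain-rule identification $\tfrac{d}{dt}|_{t=0}J(t\star u)=J'(u)[w]$ — that is, the Pohozaev identity itself. The generator $w=\tfrac Np u+x\cdot\nabla u$ need not lie in $W^{1,p}(\mathbb{R}^N)$ for an arbitrary weak solution (the weight $x\cdot\nabla u$ can spoil $L^p$-integrability), so it is not immediately admissible as a test function, and $J$ is nonlinear, so one cannot simply equate $J(t\star u)-J(u)$ with $J'(u)[t\star u-u]$. I would resolve this by a truncation argument: test the weak formulation against $\eta_R\,(x\cdot\nabla u)+\tfrac Np\,\eta_R u$ with a cutoff $\eta_R(x)=\eta(x/R)$ (an admissible test function), integrate by parts in the local and nonlocal terms, and let $R\to\infty$, using the decay afforded by \autoref{Theorem 1.1} (in particular the radial/Strauss estimate) to show the boundary and tail contributions vanish and the remaining terms reassemble into $M(u)=0$. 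The only places requiring estimates are the critical term $\|u\|_{p^*}^{p^*}$ and the nonlocal $A(u)$ under the cutoff; the rest is the bookkeeping of the scaling exponents already carried out above.
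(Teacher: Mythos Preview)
Your approach is correct but is packaged differently from the paper's. The paper simply \emph{cites} the Pohozaev identity from the literature (d'Avenia--Siciliano--Squassina and Moroz--Van Schaftingen) as
\[
\tfrac{N-p}{p}\|\nabla u\|_p^p+\tfrac{N-sp}{p}[u]_{s,p}^p=\tfrac{N\lambda}{p}\|u\|_p^p+\tfrac{N}{p^*}\|u\|_{p^*}^{p^*}+\tfrac{N+\alpha}{2q}A(u),
\]
pairs it with the Nehari-type identity $\|\nabla u\|_p^p+[u]_{s,p}^p-\lambda\|u\|_p^p-\|u\|_{p^*}^{p^*}-A(u)=0$ obtained by testing the weak formulation with $v=u$, and reads off $M(u)=0$ from the linear combination $\tfrac{N}{p}\times(\text{Nehari})-(\text{Pohozaev})$. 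Your route instead identifies $M(u)$ as $\tfrac{d}{dt}\big|_{t=0}J(t\star u)$ and then proposes to \emph{re-prove} the Pohozaev identity via a cutoff/truncation argument. These are the same mathematics underneath---the Pohozaev identity is precisely the statement that the dilation derivative of the energy vanishes at a critical point---but the paper offloads the hard analytic step to the references and finishes in two lines of algebra, whereas you unpack and redo that step.

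One caveat on your sketch: you invoke \autoref{Theorem 1.1} to supply the decay needed in the truncation, but Lemma~\ref{Lemma 3.1} carries none of the extra hypotheses of that theorem ($p\geq 2$, $N-\alpha\leq 2p$, $p\leq q$, $u>0$ radial, $\lambda<0$). The Pohozaev identities in the cited references are established directly for weak solutions in $W^{1,p}(\mathbb{R}^N)$ without such restrictions, so if you do carry out the cutoff argument you should not lean on \autoref{Theorem 1.1}; the finiteness of $\|\nabla u\|_p^p$, $[u]_{s,p}^p$, $\|u\|_{p^*}^{p^*}$ and $A(u)$ is already enough to make the boundary and tail terms vanish as $R\to\infty$.
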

	\begin{proof}
		By, \cite{Avenia2015} and \cite{Moroz2015} we can see that $u$ satisfies the Pohozaev identity :
		\begin{equation}\label{3.3}
			\left(\frac{N-p}{p}\right)\left\| \nabla u\right\|_p^p+\left(\frac{N-sp}{p}\right)[u]_{s,p}^p=\frac{N\lambda}{p}\left\|u \right\|_p^p+\frac{N}{p^*}\left\| u\right\|_{p^*}^{p^*}+\left(\frac{N+\alpha}{2q}\right)A(u),
		\end{equation}
		and since $u$ is a solution of \eqref{1.1}, we have:
		\begin{equation}\label{3.4}
			\left\| u\right\|_p^p+[u]_{s,p}^p-\lambda\left\| u\right\|_{p}^p-\left\| u\right\|_{p^*}^{p^*}-A(u)=0.
		\end{equation}
		By \eqref{3.3} and \eqref{3.4} we get
		\begin{equation*}
			M(u)= \left\| \nabla u \right\|_p^p+s[u]_{s,p}^p-\left\| u\right\|_{p^*}^{p^*}-\gamma_qA(u)=0.
		\end{equation*}
	\end{proof}
	\noindent For $u\in W^{1,p}(\mathbb{R}^N)$ define $$I_u(t):=E(t\star u)=\frac{e^{pt}}{p}\left\| \nabla u \right\|_p^p+\frac{e^{spt}}{p}[u]_{s,p}^p-\frac{e^{p^*t}}{p^*}\left\| u\right\|_{p^*}^{p^*}-\frac{e^{2q\gamma_q}}{2q}A(u).$$
	Let us examine the critical points of $I_u$.
	\begin{lemma}\label{Lemma 3.2}
		For $u\in S(\tau)$, $t\in \mathbb{R}$ is a critical point of $I_u$ if and only if $t\star u \in P(\tau)$.
	\end{lemma}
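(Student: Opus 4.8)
The plan is to prove the equivalence by showing directly that the derivative of the fiber-map energy equals the Pohozaev functional evaluated along the fiber, namely $I_u'(t) = M(t\star u)$ for every $t\in\mathbb{R}$. Once this identity is in hand, the stated equivalence is immediate, because the dilation $t\star u$ preserves the $L^p$ norm and hence keeps the whole fiber inside $S(\tau)$.

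First I would record the scaling behaviour of each term under $t\star u(x)=e^{Nt/p}u(e^tx)$. A single change of variables $y=e^tx$ gives $\|t\star u\|_p^p=\|u\|_p^p$, so in particular $t\star u\in S(\tau)$ for \emph{all} $t$ whenever $u\in S(\tau)$. Applying the same substitution to the gradient term (using $\nabla(t\star u)(x)=e^{(N/p+1)t}\nabla u(e^tx)$), to the Gagliardo seminorm, to the critical term, and to the Choquard term yields
\begin{align*}
\|\nabla(t\star u)\|_p^p &= e^{pt}\|\nabla u\|_p^p, & [t\star u]_{s,p}^p &= e^{spt}[u]_{s,p}^p,\\
\|t\star u\|_{p^*}^{p^*} &= e^{p^*t}\|u\|_{p^*}^{p^*}, & A(t\star u) &= e^{2q\gamma_q t}A(u).
\end{align*}
The only computation requiring care is the last one: the two factors $|u|^q$ contribute $e^{2Ntq/p}$, the two variable changes contribute $e^{-2Nt}$, and the kernel $|x-y|^{-(N-\alpha)}$ contributes $e^{(N-\alpha)t}$, for a total factor $e^{(2Nq/p-N-\alpha)t}$. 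By the definition $\gamma_q=\frac{N}{p}-\frac{N+\alpha}{2q}$ one checks $2q\gamma_q=\frac{2Nq}{p}-(N+\alpha)$, which matches the exponent exactly. These identities also reproduce the given formula for $I_u(t)=E(t\star u)$ (the $\lambda$-term is absent precisely because $\|t\star u\|_p^p\equiv\tau$ is constant along the fiber).

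Next I would differentiate $I_u$ term by term; since each coefficient is a constant times an exponential, this gives
\begin{align*}
I_u'(t) = e^{pt}\|\nabla u\|_p^p + s\,e^{spt}[u]_{s,p}^p - e^{p^*t}\|u\|_{p^*}^{p^*} - \gamma_q\,e^{2q\gamma_q t}A(u).
\end{align*}
Comparing with $M(t\star u)=\|\nabla(t\star u)\|_p^p+s[t\star u]_{s,p}^p-\|t\star u\|_{p^*}^{p^*}-\gamma_q A(t\star u)$ and inserting the scaling identities above, the two right-hand sides coincide; thus $I_u'(t)=M(t\star u)$ for all $t$. This is in fact the design principle behind the definition of $M$.

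Finally I would conclude: if $t$ is a critical point of $I_u$ then $I_u'(t)=0$, so $M(t\star u)=0$, and since $t\star u\in S(\tau)$ this means $t\star u\in P(\tau)$; conversely, $t\star u\in P(\tau)$ forces $M(t\star u)=0$, hence $I_u'(t)=0$ and $t$ is critical. The only genuine bookkeeping obstacle is the verification of the Choquard scaling exponent $2q\gamma_q$ described above; every other step is a direct consequence of the change of variables together with the construction of $M$.
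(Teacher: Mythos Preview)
Your proof is correct and follows exactly the same route as the paper: compute $I_u'(t)$ termwise, recognise it as $M(t\star u)$ via the scaling identities, and note that $\|t\star u\|_p^p=\tau$ so the fiber stays in $S(\tau)$. The paper's version is a one-line statement of the identity $I_u'(t)=M(t\star u)$ together with $\|t\star u\|_p^p=\tau$; you have simply supplied the underlying scaling computations (in particular the Choquard exponent check $2q\gamma_q=2Nq/p-(N+\alpha)$) that the paper leaves implicit.
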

	\begin{proof}
		For any $u\in S(\tau)$ we have:
		\begin{equation*}
			I_u'(t)= e^{pt}\left\| \nabla u \right\|_p^p+se^{spt}[u]_{s,p}^p-e^{p^*t}\left\| u\right\|_{p^*}^{p^*}-\gamma_qe^{2q\gamma_qt}A(u)=M(t\star u),
		\end{equation*}
		and $\left\| t\star u\right\|_p^p=\left\| u\right\|_p^p=\tau$. Therefore, $t\in \mathbb{R}$ is a critical point of $I_u$ if and only if $t\star u\in P(\tau)$.
	\end{proof}
	\section{Case  \texorpdfstring{$1$}{TEXT} : \texorpdfstring{$\frac{p}{2}\left(\frac{N+\alpha}{N}\right)<q<\bar{q_s}:=\frac{p}{2}\left(\frac{N+\alpha+sp}{N}\right)$}
		{TEXT} }
	By \eqref{S} and \autoref{prop 3.1}, for $u\in S(\tau)$, we have:
	\begin{eqnarray}\label{E(u)}
		E(u) & \geq & \frac{1}{p}T(u)-\frac{1}{p^*S^{\frac{p^*}{p}}}T(u)^{\frac{p^*}{p}}-\frac{C_N}{2q}T(u)^{\frac{2q\gamma_q}{p}}\tau^{\frac{2q(1-\gamma_q)}{p}}\nonumber\\
		& = & T(u)\left(\frac{1}{p}-\frac{1}{p^*S^{\frac{p^*}{p}}}T(u)^{\frac{p^*}{N}}-\frac{C_N}{2q}T(u)^{\frac{2q\gamma_q-p}{p}}\tau^{\frac{2q(1-\gamma_q)}{p}}\right) .
	\end{eqnarray}
	Let us define
	\begin{equation}\label{h}
		h_{\tau}(r):= \frac{1}{p}-\frac{1}{p^*S^{\frac{p^*}{p}}}r^{\frac{p^*}{N}}-\frac{C_N}{2q}r^{\frac{2q\gamma_q-p}{p}}\tau^{\frac{2q(1-\gamma_q)}{p}} \text{ for all } r>0.
	\end{equation}
	and look for a subset $\Lambda(\tau)$ of $S(\tau)$, such that $\displaystyle \min_{u\in P(\tau)}E(u)=\min_{u\in \Lambda(\tau)}E(u)$, using the following property of $h_{\tau}$.
	\begin{lemma}\label{Lemma 4.1}
		There exists  $\tau_0>0$ such that, for every $\tau\in (0,\tau_0)$, $h_{\tau}(r)$ has unique global maximum at some $r_{\tau}>0$, and  there exist $r_0>0$ such that $h_{\tau}(r)\geq 0$ for all $r\in [\frac{\tau}{\tau_0}r_0,r_0]$.
	\end{lemma}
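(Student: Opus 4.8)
The plan is to treat $h_\tau$ purely as a one–variable function of $r$ and exploit its profile. First I would normalise notation: writing $\sigma:=p^*/N>0$, $\theta:=\frac{p-2q\gamma_q}{p}$ and $\kappa:=\frac{2q(1-\gamma_q)}{p}$, the function \eqref{h} becomes $h_\tau(r)=\frac1p-a\,r^{\sigma}-b(\tau)\,r^{-\theta}$ with $a=\frac{1}{p^*S^{p^*/p}}$ and $b(\tau)=\frac{C_N}{2q}\tau^{\kappa}=:b_0\tau^{\kappa}$. The hypotheses of Case $1$ enter precisely here. From $q<\bar{q}_s$ one gets $\frac{2qN}{p}<N+\alpha+sp$, i.e. $2q\gamma_q<sp<p$, so that $\theta\ge 1-s>0$; and $\gamma_q<1$ in this range gives $\kappa>0$. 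Hence both of the last two exponents have the sign needed to force $h_\tau(r)\to-\infty$ as $r\to0^+$ and as $r\to\infty$, so $h_\tau$ is bounded above and attains its supremum.

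Next I would settle the unique–maximum claim. Setting $f(r):=a\,r^{\sigma}+b\,r^{-\theta}=\frac1p-h_\tau(r)$, the equation $f'(r)=0$ is equivalent to $r^{\sigma+\theta}=\frac{b\theta}{a\sigma}$, which has the single positive root $r_\tau=\big(\frac{b\theta}{a\sigma}\big)^{1/(\sigma+\theta)}$; comparing $a\sigma r^{\sigma-1}$ with $b\theta r^{-\theta-1}$ shows $f'<0$ on $(0,r_\tau)$ and $f'>0$ on $(r_\tau,\infty)$, so $r_\tau$ is the unique global minimiser of $f$ and $h_\tau$ is quasiconcave with a unique global maximum at $r_\tau$. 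A direct substitution yields $h_\tau(r_\tau)=\frac1p-K\,b(\tau)^{\sigma/(\sigma+\theta)}$ for a constant $K>0$ independent of $\tau$, a quantity decreasing continuously and strictly from $\frac1p$ (as $\tau\to0^+$) to $-\infty$. I would then define $\tau_0$ by $h_{\tau_0}(r_{\tau_0})=0$ and fix the radius $r_0:=r_{\tau_0}$; for every $\tau\in(0,\tau_0)$ the maximum $h_\tau(r_\tau)$ is then strictly positive.

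For the interval statement I would evaluate $h_\tau$ at the two endpoints and invoke quasiconcavity. From $h_{\tau_0}(r_0)=0$, writing $P:=a\,r_0^{\sigma}$ and $Q:=b(\tau_0)\,r_0^{-\theta}$, one has $P+Q=\frac1p$. Since $b(\tau)<b(\tau_0)$ for $\tau<\tau_0$, the right endpoint gives $h_\tau(r_0)=(b(\tau_0)-b(\tau))\,r_0^{-\theta}>0$. For the left endpoint, substituting $r=\frac{\tau}{\tau_0}r_0$ and using $b(\tau)=b_0\tau^{\kappa}$ produces the exact identity $h_\tau(\tfrac{\tau}{\tau_0}r_0)=\frac1p-P(\tau/\tau_0)^{\sigma}-Q(\tau/\tau_0)^{\kappa-\theta}$. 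Here the second Case $1$ condition matters: because $q>\frac{p}{2}\frac{N+\alpha}{N}>\frac p2$ we have $2q>p$, equivalently $\kappa>\theta$, so the exponent $\kappa-\theta$ is positive; since $\tau/\tau_0\in(0,1)$ both powers are $<1$, whence $h_\tau(\tfrac{\tau}{\tau_0}r_0)>\frac1p-(P+Q)=0$. As $h_\tau$ is quasiconcave, its minimum over any interval is attained at an endpoint, so both endpoint values being positive yields $h_\tau\ge0$ on all of $[\frac{\tau}{\tau_0}r_0,\,r_0]$.

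The main obstacle is not the calculus but getting the two exponent inequalities right and recognising that they are exactly the Case $1$ hypotheses. The sign $\theta>0$ (requiring $q<\bar{q}_s$) is what gives $h_\tau$ its cap shape with a blow–down at the origin, and $\kappa>\theta$ (requiring $q>p/2$) is what keeps the linearly scaled left endpoint $\frac{\tau}{\tau_0}r_0$ inside the positivity set, whose genuine lower zero scales only like $\tau^{\kappa/\theta}=o(\tau)$. The one delicate point is that the left–endpoint cancellation depends on making the scaling $b(\tau)=b_0\tau^{\kappa}$ explicit before substituting; once that is done, the conclusion is immediate from the identity $P+Q=\frac1p$.
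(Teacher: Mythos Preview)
Your proof is correct and follows essentially the same approach as the paper: you locate the unique critical point of $h_\tau$, define $\tau_0$ via $h_{\tau_0}(r_{\tau_0})=0$ with $r_0:=r_{\tau_0}$, and verify nonnegativity at both endpoints using the same exponent identities (the paper writes out the left-endpoint computation directly rather than through your $P,Q$ decomposition, but the two are algebraically identical, and both hinge on $2q-p>0$). The one place where your write-up is slightly more complete than the paper's is the explicit appeal to quasiconcavity to pass from the two endpoints to the whole interval; the paper asserts the interval conclusion immediately after checking endpoints, leaving the single-hump structure implicit.
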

	\begin{proof}
		Since, $2q\gamma_q-p<0$, we can see that $h_{\tau}(r)\rightarrow -\infty$ as $r\rightarrow +\infty$, $h_{\tau}(r)\rightarrow -\infty$ as $r\rightarrow 0$, and
		\begin{equation*}
			h_{\tau}'(r) = -\frac{1}{NS^{\frac{p^*}{p}}}r^{\frac{p^*}{N}-1}-\frac{C_N(2q\gamma_q-p)\tau^{\frac{2q(1-\gamma_q)}{p}}}{2qp}r^{\frac{2q\gamma_q}{p}-2},
		\end{equation*}
		then clearly, \begin{eqnarray}\label{r_tau}
			r_{\tau}= \left(\frac{NC_N(p-2q\gamma_q)S^{\frac{p^*}{p}}}{2pq}\right)^{\frac{p}{p^*-2q\gamma_q}}\tau^{\frac{2q(1-\gamma_q)}{p^*-2q\gamma_q}}>0
		\end{eqnarray}
		is the global maxima for  $h_{\tau}$ with
		\begin{equation*}
			\max_{r\in \mathbb{R}}h_{\tau}(r) = h_{\tau}(r_{\tau})=\frac{1}{p}-K\tau^{\frac{2q(1-\gamma_q)(p^*-p)}{p(p^*-2q\gamma_q)}}
		\end{equation*}
		where $K=\frac{C_N}{2q}(\frac{NC_N(p-2q\gamma_q)S^{\frac{p^*}{p}}}{2pq})(1+\frac{N(p-2q\gamma_q)S^{\frac{p^*}{p}}}{pp^*})>0$. Let $\tau_0=(pK)^{\frac{N(2q\gamma_q-p^*)}{2q(1-\gamma_q)p^*}}$ and $r_0=r_{\tau_0}$, then 
		$$\max_{r\in \mathbb{R}}h_{\tau_0}(r) = h_{\tau_0}(r_{0})=\frac{1}{p}-K\tau_0^{\frac{2q(1-\gamma_q)(p^*-p)}{p(p^*-2q\gamma_q)}}=0,$$ 
		and hence, for every $\tau<\tau_0$
		\begin{equation}\label{h_tau(r_tau)}
			\max_{r\in \mathbb{R}}h_{\tau}(r)=h_{\tau}(r_{\tau})=\frac{1}{p}-K\tau^{\frac{2q(1-\gamma_q)(p^*-p)}{p(p^*-2q\gamma_q)}}>\frac{1}{p}-K\tau_0^{\frac{2q(1-\gamma_q)(p^*-p)}{p(p^*-2q\gamma_q)}}=h_{\tau_0}(r_0)=0.
		\end{equation}
		Now, for every $r$, define $\phi_r:(0,\tau_0)\rightarrow \mathbb{R}$ as
		$\phi_r(\tau):=h_{\tau}(r)$. Clearly, $\phi_r$ is  non-increasing for every $r$, and hence $h_{\tau}(r_0)> h_{\tau_0}(r_0)=0$ for each $\tau\in (0,\tau_0)$. 
		Also, since $2q-p>0$, then for every $\tau<\tau_0$ we have:
		\begin{eqnarray*}
			h_{\tau}(\frac{\tau}{\tau_0}r_0) & = & \frac{1}{p}-\frac{(\tau r_0)^{\frac{p^*}{N}}}{p^*\tau_0^{\frac{p^*}{N}}S^{\frac{p^*}{p}}}-\frac{C_N\tau^{\frac{2q-p}{p}}r_0^{\frac{2q\gamma_q-p}{p}}}{2q\tau_0^{\frac{2q\gamma_q-p}{p}}}\\
			& \geq & \frac{1}{p}-\frac{r_0^{\frac{p^*}{p}}}{p^*S^{\frac{p^*}{p}}}-\frac{C_N\tau_0^{\frac{2q(1-\gamma_q)}{p}}r_0^{\frac{2q\gamma_q-p}{p}}}{2q} = h_{\tau_0}(r_0)=0.
		\end{eqnarray*}
		Therefore, $h_{\tau}(r)\geq 0$ for all $r\in [\frac{\tau}{\tau_0}r_0,r_0]$.
	\end{proof}
	\noindent The above lemma motivates us to define: 
	$$\Lambda (\tau):=\{ u\in S(\tau): T(u)<r_0\},$$
	$$\partial \Lambda (\tau):=\{ u\in S(\tau): T(u)=r_0\},$$
	so, that we can study $m(\tau):=\displaystyle \min_{u\in \Lambda(\tau)}E(u).$
	Considering the notations:
	$$P(\tau)^+:=\{ u\in P(\tau) : E(u)<0\},$$
	$$P(\tau)^-:=\{u \in P(\tau): E(u)>0\}, $$
	we move forward to discuss some properties of $m(\tau)$ that will help us deduce the existence result. 
	\begin{lemma}\label{Lemma 4.2}
		For every $\tau \in (0,\tau_0)$,
		\begin{equation}\label{4.5}
			m(\tau)<0<\min_{u\in \partial\Lambda(\tau)}E(u).
		\end{equation}
	\end{lemma}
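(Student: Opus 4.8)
The plan is to prove the two inequalities of \eqref{4.5} separately, both resting on the pointwise lower bound $E(u)\ge T(u)\,h_\tau(T(u))$ recorded in \eqref{E(u)} together with the information about $h_\tau$ collected in \autoref{Lemma 4.1}, and on the scaling behaviour of the fiber map $t\star u$.

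For the right-hand inequality, I would fix an arbitrary $u\in\partial\Lambda(\tau)$, so that $T(u)=r_0$. Substituting $r=T(u)=r_0$ into \eqref{E(u)} gives $E(u)\ge r_0\,h_\tau(r_0)$. From the proof of \autoref{Lemma 4.1} we already know that $\tau\mapsto h_\tau(r_0)$ is non-increasing and that $h_{\tau_0}(r_0)=0$, whence $h_\tau(r_0)>0$ for every $\tau\in(0,\tau_0)$. Since the bound $r_0\,h_\tau(r_0)$ is a strictly positive constant independent of the particular $u\in\partial\Lambda(\tau)$, we conclude $\min_{u\in\partial\Lambda(\tau)}E(u)\ge r_0\,h_\tau(r_0)>0$.

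For the left-hand inequality it suffices to exhibit a single element of $\Lambda(\tau)$ at which $E$ is negative. Fix any $u_0\in S(\tau)$ and consider $t\star u_0$; since $\|t\star u_0\|_p^p=\|u_0\|_p^p=\tau$, the fiber stays in $S(\tau)$. Because $T(t\star u_0)=e^{pt}\|\nabla u_0\|_p^p+e^{spt}[u_0]_{s,p}^p\to 0$ as $t\to-\infty$, we have $t\star u_0\in\Lambda(\tau)$ for all sufficiently negative $t$. Writing $E(t\star u_0)=I_{u_0}(t)$ and factoring out $e^{2q\gamma_q t}$, I would use that in the present regime the four exponents satisfy $0<2q\gamma_q<sp<p<p^*$ (the inequality $2q\gamma_q<sp$ is equivalent to $q<\bar q_s$, the defining restriction of this section, and $2q\gamma_q>0$ follows from $q>\tfrac{p}{2}\tfrac{N+\alpha}{N}$). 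Consequently every term of the bracket except $-A(u_0)/(2q)$ carries a strictly positive power of $e^{t}$ and vanishes as $t\to-\infty$, so that $E(t\star u_0)\to 0^-$ and in particular $E(t\star u_0)<0$ for $t$ sufficiently negative; therefore $m(\tau)\le E(t\star u_0)<0$.

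The main point to get right is the ordering of these four exponents: the conclusion $m(\tau)<0$ hinges precisely on the Choquard term decaying the slowest as $t\to-\infty$, i.e. on $2q\gamma_q$ being the smallest of the exponents, which is exactly the subcritical window $q<\bar q_s$ isolated here; for $q$ above this threshold the analysis would change and a different (blow-up) argument is needed, as in \autoref{Theorem 1.3}. The required strict positivity $A(u_0)>0$ for $u_0\not\equiv 0$ is immediate from $A=\mu\mathcal A_q$ with $\mu>0$ and the fact that $\mathcal A_q(u_0)$ is the integral of a nonnegative, not-identically-zero kernel.
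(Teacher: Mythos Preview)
Your proposal is correct and follows essentially the same approach as the paper: the right-hand inequality via $E(u)\ge r_0\,h_\tau(r_0)>r_0\,h_{\tau_0}(r_0)=0$, and the left-hand inequality by scaling $t\star u$ with $t\to-\infty$ and using the exponent ordering $2q\gamma_q<sp<p<p^*$ to force $E(t\star u)\to 0^-$. Your added remarks justifying $2q\gamma_q<sp\Leftrightarrow q<\bar q_s$ and $A(u_0)>0$ are useful details the paper leaves implicit.
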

	\begin{proof}
		For $u\in \partial \Lambda(\tau)$, by \eqref{E(u)} and \eqref{h_tau(r_tau)}, we have:
		$$E(u)\geq r_0h_{\tau}(r_0)>r_0h_{\tau_0}(r_0)=0.$$
		Now, since $2q\gamma_q<sp<p<p^*$, we get  
		$$E(t\star u )= \frac{e^{pt}}{p}\left\| \nabla u \right\|_p^p+\frac{e^{spt}}{p}[u]_{s,p}^p-\frac{e^{p^*t}}{p^*}\left\| u\right\|_{p^*}^{p^*}-\frac{e^{2q\gamma_qt}}{2q}A(u)\rightarrow 0^- \text{ as } t \rightarrow -\infty,$$
		and hence, there exists $t_0<-1$ such that $E(t_0\star u )<0$ and $T(t_0\star u)<r_0$.
		Therefore, we get \eqref{4.5}.
	\end{proof}
	\begin{lemma}\label{Lemma 4.3}
		$m(\tau)=\displaystyle \min_{u\in P(\tau)}E(u)$.
	\end{lemma}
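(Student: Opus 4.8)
The statement asserts the equality of two minimisation values, so the plan is to prove the two inequalities $\inf_{P(\tau)}E\le m(\tau)$ and $\inf_{P(\tau)}E\ge m(\tau)$ separately, using the fibre map as the bridge between the sublevel set $\Lambda(\tau)$ and the Pohozaev manifold $P(\tau)$. Throughout I would exploit the ordering $0<2q\gamma_q<sp<p<p^*$ valid in Case~$1$ (already used in the proof of \autoref{Lemma 4.2}), which fixes the shape of $I_u(t)=E(t\star u)$: as $t\to-\infty$ the smallest-exponent term $-\frac{e^{2q\gamma_qt}}{2q}A(u)$ dominates, so $I_u(t)\to 0^-$, while as $t\to+\infty$ the term $-\frac{e^{p^*t}}{p^*}\|u\|_{p^*}^{p^*}$ forces $I_u(t)\to-\infty$. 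I would also recall from \autoref{Lemma 3.2} that $I_u'(t)=M(t\star u)$, so that the critical points of $I_u$ are exactly those $t$ for which $t\star u\in P(\tau)$, and that $t\mapsto T(t\star u)=e^{pt}\|\nabla u\|_p^p+e^{spt}[u]_{s,p}^p$ is continuous and strictly increasing from $0$ to $+\infty$.

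For the inequality $\inf_{P(\tau)}E\le m(\tau)$ I would project $\Lambda(\tau)$ onto $P(\tau)$ along fibres. Fix $u\in\Lambda(\tau)$; since $T(u)<r_0$ there is a unique $\bar t>0$ with $T(\bar t\star u)=r_0$, and $t\star u\in\Lambda(\tau)$ precisely for $t<\bar t$. On $(-\infty,\bar t]$ the map $I_u$ is continuous with $I_u(t)\to 0^-$ as $t\to-\infty$ and $I_u(\bar t)=E(\bar t\star u)>0$ since $\bar t\star u\in\partial\Lambda(\tau)$ (\autoref{Lemma 4.2}); as $I_u$ is strictly negative somewhere on this interval, its infimum there is negative and is attained at an interior point $t_u<\bar t$. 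Then $I_u'(t_u)=0$, so $t_u\star u\in P(\tau)$ by \autoref{Lemma 3.2}, and because $0\in(-\infty,\bar t)$ we get $E(t_u\star u)=I_u(t_u)\le I_u(0)=E(u)$. Hence $\inf_{P(\tau)}E\le E(u)$ for every $u\in\Lambda(\tau)$, and taking the infimum over $\Lambda(\tau)$ gives $\inf_{P(\tau)}E\le m(\tau)$.

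For the reverse inequality it suffices to show $E(u)\ge m(\tau)$ for every $u\in P(\tau)$. If $E(u)\ge 0$ this is immediate, as $m(\tau)<0$ by \autoref{Lemma 4.2}. If $E(u)<0$ I claim $u\in\Lambda(\tau)$, i.e. $T(u)<r_0$; granting this, $E(u)\ge\min_{\Lambda(\tau)}E=m(\tau)$. Note that the naive bound $E(u)\ge T(u)\,h_\tau(T(u))$ from \eqref{E(u)} only yields $h_\tau(T(u))<0$ and thus localises $T(u)$ to the two branches $(0,r_-)\cup(r_+,\infty)$ determined by the zeros of $h_\tau$, which does not by itself exclude the far branch $T(u)>r_+>r_0$; this is exactly why a finer fibre argument is needed. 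So suppose instead $T(u)\ge r_0$, whence the unique $\bar t$ with $T(\bar t\star u)=r_0$ satisfies $\bar t\le 0$ and $I_u(\bar t)=E(\bar t\star u)>0$ (\autoref{Lemma 4.2}), while $I_u(0)=E(u)<0$ and $u\in P(\tau)$ makes $0$ a critical point of $I_u$. Granting the structural fact below that $I_u$ has at most two critical points, the behaviour $I_u\to 0^-$ at $-\infty$, $I_u\to-\infty$ at $+\infty$, together with $I_u(\bar t)>0$ (so $I_u$ is not monotone), forces $I_u$ to decrease, then increase, then decrease, with a single local minimum $t_1$ and a single local maximum $t_2>t_1$. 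Since $0\in\{t_1,t_2\}$, a short case check shows that either choice yields $I_u(t)<0$ for all $t\in(-\infty,0]$, contradicting $I_u(\bar t)>0$ with $\bar t\le 0$. This contradiction gives $T(u)<r_0$, completing the reverse inequality.

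The main obstacle is precisely the structural claim that $I_u$ admits at most two critical points. I would prove it by writing $I_u'(t)=e^{2q\gamma_q t}G(t)$ with $G(t)=\|\nabla u\|_p^p\,e^{(p-2q\gamma_q)t}+s[u]_{s,p}^p\,e^{(sp-2q\gamma_q)t}-\|u\|_{p^*}^{p^*}\,e^{(p^*-2q\gamma_q)t}-\gamma_q A(u)$ and showing that $G$ is unimodal: differentiating and using $0<sp-2q\gamma_q<p-2q\gamma_q<p^*-2q\gamma_q$ one checks that $G'$ vanishes exactly once, so $G$ increases then decreases from $G(-\infty)=-\gamma_q A(u)<0$ to $G(+\infty)=-\infty$ and therefore has at most two zeros. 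Equivalently, this is the Descartes-type rule for exponential sums, the coefficients of $I_u'$ ordered by increasing exponent having sign pattern $(-,+,+,-)$ with two sign changes. Finally, the asserted equality is between infima; attainment of the minimum, hence the use of $\min$, is secured alongside the construction of the minimiser of $E$ on $\Lambda(\tau)$, whose existence (via concentration–compactness) underlies \autoref{Theorem 1.2}.
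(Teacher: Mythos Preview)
Your proof is correct and follows the paper's strategy: both hinge on the same structural fact that $I_u$ has exactly two critical points (proved via the identical exponential-sum analysis of $I_u'(t)=e^{2q\gamma_q t}G(t)$), and both obtain $m(\tau)\le\inf_{P(\tau)}E$ by showing $\{u\in P(\tau):E(u)<0\}\subset\Lambda(\tau)$. The one genuine difference is in the direction $\inf_{P(\tau)}E\le m(\tau)$: the paper assumes a minimiser $u$ of $E$ on $\Lambda(\tau)$ exists and then argues that $\underline{t}_u=0$, so $u\in P(\tau)^-$; your fibre-projection argument instead produces, for \emph{every} $u\in\Lambda(\tau)$, a point $t_u\star u\in P(\tau)$ with $E(t_u\star u)\le E(u)$, and then takes the infimum. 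Your route is more self-contained, since the paper's version tacitly relies on the minimiser built only later (via \autoref{prop 4.1} and the proof of \autoref{Theorem 1.2}); this is not a logical gap in the paper, as Lemma~\ref{Lemma 4.3} is not invoked in that construction, but your ordering is cleaner.
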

	\begin{proof}
		For $u\in S(\tau)$, as done in previous lemma we can see that, $I_u(t)\rightarrow 0^-$, $T(t\star u)\rightarrow 0$ as $t\rightarrow -\infty$ and $I_u(t)\rightarrow -\infty$ as $t\rightarrow +\infty$. Also, $I_u(t)>0$ for all $t$ such that $t\star u \in \partial\Lambda(\tau)$. Thus, $I_u$ must have atleast two crtical points, $\underline{t}_u\leq 0\leq \bar{t}_u$ where $\underline{t}_u$ corresponds to a local minima with $I_u(\underline{t}_u)<0$ and $\bar{t}_u$ to a local maxima with $I_u(\bar{t}_u)>0$. Thus by \autoref{Lemma 3.2}, $\underline{t}_u\star u \in P(\tau)^-$ and $\bar{t}_u\star u \in P(\tau)^+$.\\
		Claim 1:  $I_u$ has exactly two roots. \\
		Let if possible, $I_u$ has more than two roots, then $g$ must attain $C_q$ at more than two points where
		$$g(t):=e^{(p-2q\gamma_q)t}\left\| \nabla u \right\|_p^p+se^{(sp-2q\gamma_q)t}[u]_{s,p}^p-e^{(p^*-2q\gamma_q)t}\left\| u\right\|_{p^*}^{p^*},$$
		and
		$C_q=\gamma_qA(u)$. That is, $g$ must have atleast two critical points. Moreover, since every critical point of $g$ is a root of :
		$$f(t):= (p^*-2q\gamma_q)e^{(p^*-sp)t}\left\| u \right\|_{p^*}^{p^*}- (p-2q\gamma_q)e^{(p-sp)t}\left\| \nabla u \right\|_p^p-s(sp-2q\gamma_q)[u]_{s,p}^p,$$
		thus, $f$ must have atleast two roots. Now, let us examine the function $f$.
		We have:
		$$f'(t)=(p^*-sp)(p^*-2q\gamma_q)e^{(p^*-sp)t}\left\| u \right\|_{p^*}^{p^*}-(p-sp)(p-2q\gamma_q)e^{(p-sp)t}\left\| \nabla u \right\|_p^p,$$
		and $t_0=\frac{1}{p^*-p}\ln|\frac{(p-sp)(p-2q\gamma_q)\left\| \nabla u \right\|_p^p}{(p^*-sp)(p^*-2q\gamma_q)\left\| u \right\|_{p^*}^{p^*}}|$ is the unique critical point and global minima of $f$. Since $p^*>p$, we can see that $f(t)\rightarrow 0^-$ as $t\rightarrow -\infty$, thus $f(t_0)<0$ and $f$ can have atmost one root at some $t>t_0$. This gives us a contradiction, hence $I_u$ has exactly two roots $\underline{t}_u$ and $\bar{t}_u$.\\
		Claim 2 : $\underline{t}_u\star u \in \Lambda (\tau)$.\\
		Let if possible $T(\underline{t}_u\star u)\geq r_0$. Now, since $T(t\star u)\rightarrow 0$ as $t\rightarrow -\infty$ and $t\mapsto T(t\star u)$ is non-decreasing, therefore, there exists some $t_0<\underline{t}_u$ such that $T(t_0\star u)=r_0$, that is $t_0\star u\in \partial\Lambda (\tau)$. Thus, we get $I_u(t_0)=E(t_0\star u)>0$. But, since $t_0<\underline{t}_u$ and $I_u$ is non increasing on the interval $(-\infty, \underline{t}_u]$ with $\displaystyle \sup_{t\in (-\infty, \underline{t}_u]}I_u(t)=\lim_{t\rightarrow -\infty}I_u(t)=0^-$, thus $I_u(t_0)<0$. Hence, by contradiction, $\underline{t}_u\star u \in \Lambda (\tau)$.\\
		Now, for $u\in P(\tau)^-$ and $t<0$, define $v(x):=e^{-\frac{Nt}{p}}u(e^{-t}x)$. Then $t\star v =u \in P(\tau)^-\subset P(\tau)$, hence $I_v'(t)=0$. Thus, proceeding as in claim 2, we can see that $u=t\star v \in \Lambda(\tau)$. Therefore, $P(\tau)^-\subset \Lambda(\tau)$, which gives us:
		\begin{equation*}
			m(\tau)=\min_{u\in \Lambda(\tau)}E(u)\leq \min_{u\in P(\tau)^-}E(u)=\min_{u\in P(\tau)}E(u).
		\end{equation*}
		Also, if $u$ is a minimizer of $E$ on $\Lambda(\tau)$, then $I_u(t)\geq I_u(0)$ for all $t\leq \underline{t}_u$, since $t\star u\in \Lambda(\tau)$ for all $t\leq\underline{t}_u$. Moreover, since $I_u$ is increasing in $[\underline{t}_u,\bar{t}_u]$, we get $I_u(\underline{t}_u)=I_u(0)$ and hence $\underline{t}_u=0$. Therefore, $u=0\star u=\underline{t}_u\star u \in P(\tau)^-$. Thus,
		\begin{equation*}
			m(\tau)=E(u)\geq \min_{u\in P(\tau)^-}E(u)=\min_{u\in P(\tau)}E(u).
		\end{equation*}
	\end{proof}
	\begin{lemma}\label{Lemma 4.4}
		For every, $\tau_1$, $\tau_2>0$ such that $\tau_1+\tau_2=\tau\in (0,\tau_0)$, we have:
		$$m(\tau)\leq m(\tau_1)+m(\tau_2).$$
		Further, if either $m(\tau_1)$ or $m(\tau_2)$ is achieved, then
		$m(\tau)< m(\tau_1)+m(\tau_2).$
	\end{lemma}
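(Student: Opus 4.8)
The plan is to reduce the whole statement to a single scaling inequality at a generic mass, and then recover subadditivity by a convex-combination argument exploiting that all the $m$-values are negative (\autoref{Lemma 4.2}). Concretely, I would first establish the following auxiliary claim: \emph{for every $\sigma\in(0,\tau_0)$ and every $\theta>1$ with $\theta\sigma<\tau_0$ one has $m(\theta\sigma)\le\theta\,m(\sigma)$, with strict inequality whenever $m(\sigma)$ is attained.} The point is that $\tau_1,\tau_2<\tau<\tau_0$, so applying this claim with $\sigma=\tau_i$ and $\theta=\tau/\tau_i>1$ keeps every mass inside the admissible window $(0,\tau_0)$ where Lemmas~4.1--4.3 are available.

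To prove the scaling claim I would use the $L^p$-preserving dilation by a constant. Fix $u\in\Lambda(\sigma)$ and set $v:=\theta^{1/p}u$, so that $\|v\|_p^p=\theta\sigma$, i.e. $v\in S(\theta\sigma)$. By homogeneity of the functionals, $T(v)=\theta\,T(u)$, $\|v\|_{p^*}^{p^*}=\theta^{p^*/p}\|v\|$-type scaling gives $\theta^{p^*/p}\|u\|_{p^*}^{p^*}$, and $A(v)=\theta^{2q/p}A(u)$ (since $A=\mu\mathcal A_q$ scales like $\theta^{2q/p}$). Subtracting, I would record
\begin{equation*}
E(v)-\theta E(u)=-\frac{\theta^{p^*/p}-\theta}{p^*}\|u\|_{p^*}^{p^*}-\frac{\theta^{2q/p}-\theta}{2q}A(u).
\end{equation*}
Because $q>\tfrac p2\frac{N+\alpha}{N}>\tfrac p2$ forces $2q>p$, both exponents $p^*/p$ and $2q/p$ exceed $1$, so for $\theta>1$ the two bracketed coefficients are strictly positive; as $u\neq0$ gives $\|u\|_{p^*}^{p^*}>0$ and $A(u)>0$, I obtain $E(v)<\theta E(u)$. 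Next I would put $v$ back on the constraint set: since possibly $T(v)=\theta T(u)\ge r_0$, I invoke the fibering analysis of \autoref{Lemma 4.3} at mass $\theta\sigma$, which produces a local minimizer $\underline t_v\le0$ of $I_v$ with $\underline t_v\star v\in\Lambda(\theta\sigma)$ and $I_v(\underline t_v)\le I_v(0)=E(v)$. Hence $m(\theta\sigma)\le E(\underline t_v\star v)\le E(v)<\theta E(u)$. Letting $E(u)\downarrow m(\sigma)$ along a minimizing sequence yields $m(\theta\sigma)\le\theta m(\sigma)$, and choosing $u$ to be an actual minimizer (when $m(\sigma)$ is attained) upgrades this to the strict inequality.

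With the claim in hand the conclusion is short. Applying it with $\sigma=\tau_i$, $\theta=\tau/\tau_i$ gives $m(\tau)\le\frac{\tau}{\tau_i}m(\tau_i)$ for $i=1,2$. Since $m(\tau)<0$, multiplying by $\tau_i/\tau\in(0,1)$ preserves the sense of the inequality, so $\frac{\tau_i}{\tau}m(\tau)\le m(\tau_i)$; adding the two relations and using $\tau_1+\tau_2=\tau$ I get
\begin{equation*}
m(\tau)=\frac{\tau_1}{\tau}m(\tau)+\frac{\tau_2}{\tau}m(\tau)\le m(\tau_1)+m(\tau_2).
\end{equation*}
If, say, $m(\tau_1)$ is attained, the strict form of the claim gives $\frac{\tau_1}{\tau}m(\tau)<m(\tau_1)$, and adding $\frac{\tau_2}{\tau}m(\tau)\le m(\tau_2)$ yields the strict subadditivity $m(\tau)<m(\tau_1)+m(\tau_2)$.

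The hard part will be Step~2, namely guaranteeing that after the dilation the competitor can be returned into $\Lambda(\theta\sigma)$ \emph{without increasing the energy}; this is exactly what the single-well shape of $I_v$ (two critical points, with $I_v(t)\to0^-$ as $t\to-\infty$) from \autoref{Lemma 4.3} provides. The other point requiring care is the sign bookkeeping: it is precisely the negativity of $m$ that converts the superhomogeneous estimate $E(v)<\theta E(u)$ into a \emph{sub}additive—rather than superadditive—conclusion, and that makes the convex-combination step legitimate.
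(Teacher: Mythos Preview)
Your overall strategy is the same as the paper's: establish the scaling inequality $m(\theta\sigma)\le\theta\,m(\sigma)$ (strict when $m(\sigma)$ is attained) and then combine. The paper even uses the identical algebraic step you call the ``convex combination'': from $m(\tau)\le\frac{\tau}{\tau_2}m(\tau_2)$ and $m(\tau_2)\le\frac{\tau_2}{\tau_1}m(\tau_1)$ it reads off $m(\tau)\le m(\tau_1)+m(\tau_2)$. The only real difference is the scaling map: the paper dilates in the domain, $\bar u(x)=u(\delta^{-1/N}x)$, whereas you multiply the values, $v=\theta^{1/p}u$.

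There is, however, a genuine gap in your fibering step. You assert that \autoref{Lemma 4.3} yields $\underline t_v\le 0$ and $I_v(\underline t_v)\le I_v(0)$. The lemma does give a local minimum $\underline t_v$ with $\underline t_v\star v\in\Lambda(\theta\sigma)$, but it does \emph{not} place $0$ to the left of the local maximum $\bar t_v$. If $T(v)$ is large, $0$ may lie on the final decreasing branch of $I_v$ (beyond $\bar t_v$), and then $I_v(0)$ can be strictly smaller than $I_v(\underline t_v)$; in that regime $m(\theta\sigma)\le E(v)$ simply fails. So the fibering map, as invoked, does not return $v$ into $\Lambda(\theta\sigma)$ without increasing the energy.

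The fix is exactly the observation the paper makes and you omit: restrict to near-minimizers with $E(u)<0$ (legitimate since $m(\sigma)<0$ by \autoref{Lemma 4.2}). Then \eqref{E(u)} forces $h_\sigma(T(u))<0$, and \autoref{Lemma 4.1} together with $T(u)<r_0$ gives $T(u)<\frac{\sigma}{\tau_0}r_0$. Hence $T(v)=\theta\,T(u)<\frac{\theta\sigma}{\tau_0}r_0<r_0$, so $v\in\Lambda(\theta\sigma)$ outright and $m(\theta\sigma)\le E(v)<\theta E(u)$; no fibering is needed. (Incidentally, your ``sign bookkeeping'' remark is unnecessary: multiplying $m(\tau)\le\frac{\tau}{\tau_i}m(\tau_i)$ by the positive number $\tau_i/\tau$ preserves the inequality regardless of the sign of $m$.) With this repair your argument goes through and is equivalent to the paper's; the domain dilation the paper uses has the minor advantage that the bound $T(\bar u)<\delta\,T(u)$ is automatic, whereas your value scaling gives $T(v)=\theta\,T(u)$ and so genuinely needs the extra input from \autoref{Lemma 4.1}.
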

	\begin{proof}
		Without loss of generality, let $\tau_1\leq\tau_2$. By definition of $m(\tau)$ and \autoref{Lemma 4.2}, we can find $u\in \Lambda(\tau_1)$ such that
		\begin{equation}\label{4.6}
			E(u) < m(\tau_1)+\epsilon \text{ and } E(u)<0.
		\end{equation}
		Then, by \eqref{E(u)}, we get $0>E(u)>T(u)h_{\tau_1}(T(u))$, hence by \autoref{Lemma 4.1} and the fact that $u\in \Lambda(\tau_1)$, we get 
		$$T(u)< \frac{\tau_1}{\tau}r_0.$$
		For $\delta=\frac{\tau_2}{\tau_1}\geq1$, let us define $\bar{u}(x):=u(\delta^{-\frac{1}{N}}x)$, then 
		$$T(\bar{u})=\delta(\delta^{-\frac{p}{N}}\left\| \nabla \right\|_p^p+\delta^{-\frac{sp}{N}}[u]_{s,p}^p)<\delta T(u)<\frac{\tau_2}{\tau}r_0<r_0.$$
		Thus, by \eqref{4.6}:
		$m(\tau_2)\leq E(\bar{u})\leq \delta E(u) <\delta (m(\tau_1)+\epsilon),$
		since $\epsilon>0$ is arbitrary , we get, 
		\begin{equation}\label{4.7}
			m(\tau_2)\leq \frac{\tau_2}{\tau_1}m(\tau_1).
		\end{equation}
		Also, since $\tau_2\leq \tau_1+\tau_2$, we can follow the same procedure to deduce
		\begin{equation}\label{4.8}
			m(\tau)=m(\tau_1+\tau_2)\leq \frac{(\tau_1+\tau_2)}{\tau_2}m(\tau_2)
		\end{equation}
		and hence by \eqref{4.7}, $m(\tau)\leq m(\tau_1)+m(\tau_2)$. Further, if $m(\tau_1)$ is achieved, then we can take $\epsilon=0$ in \eqref{4.6} will give us the required result. Similarly if $m(\tau_2)$ is achieved then we will get a strict inequality while proving \eqref{4.8} and hence the required result.
	\end{proof}
	\begin{lemma}\label{Lemma 4.5}
		The function $\phi : (0,\tau_0)\rightarrow \mathbb{R}$, defined as:
		$$\phi(\tau)= m(\tau)$$
		is continuous.
	\end{lemma}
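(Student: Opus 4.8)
The plan is to prove the two one-sided bounds $\limsup_{\sigma\to\tau}m(\sigma)\le m(\tau)$ and $\liminf_{\sigma\to\tau}m(\sigma)\ge m(\tau)$ by transplanting a near-minimizer at one mass to a nearby mass through the dilation $u\mapsto u(\delta^{-1/N}\,\cdot)$ already used in \autoref{Lemma 4.4}. The engine of the argument is the exact scaling behaviour of $E$ and $T$: writing $w(x):=u(\delta^{-1/N}x)$, one has $\left\| w\right\|_p^p=\delta\left\| u\right\|_p^p$,
\begin{equation*}
	T(w)=\delta^{1-\frac{p}{N}}\left\| \nabla u\right\|_p^p+\delta^{1-\frac{sp}{N}}[u]_{s,p}^p,
\end{equation*}
and
\begin{equation*}
	E(w)=\delta\left(\frac{\delta^{-\frac{p}{N}}}{p}\left\| \nabla u\right\|_p^p+\frac{\delta^{-\frac{sp}{N}}}{p}[u]_{s,p}^p-\frac{1}{p^*}\left\| u\right\|_{p^*}^{p^*}-\frac{\delta^{\frac{\alpha}{N}}}{2q}A(u)\right).
\end{equation*}
Both expressions depend continuously on $\delta$ and reduce to $T(u)$ and $E(u)$ at $\delta=1$.

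For upper semicontinuity, fix $\tau$ and let $\sigma\to\tau$. Choose $v\in\Lambda(\tau)$ with $E(v)<m(\tau)+\varepsilon$ and $E(v)<0$, which is possible by \autoref{Lemma 4.2}. Since $E(v)<0$, estimate \eqref{E(u)} forces $h_\tau(T(v))<0$, so \autoref{Lemma 4.1} gives the strict gap $T(v)<\frac{\tau}{\tau_0}r_0<r_0$. Setting $\delta=\sigma/\tau\to1$ and $v_\sigma(x)=v(\delta^{-1/N}x)\in S(\sigma)$, the displayed formulas give $T(v_\sigma)\to T(v)<r_0$ and $E(v_\sigma)\to E(v)$; hence $v_\sigma\in\Lambda(\sigma)$ for $\sigma$ near $\tau$, so $m(\sigma)\le E(v_\sigma)$. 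Letting $\sigma\to\tau$ and then $\varepsilon\to0$ yields $\limsup_{\sigma\to\tau}m(\sigma)\le m(\tau)$.

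For lower semicontinuity I reverse the roles along a sequence $\sigma_n\to\tau$: choose $u_n\in\Lambda(\sigma_n)$ with $E(u_n)<m(\sigma_n)+\varepsilon$ and $E(u_n)<0$, so again $T(u_n)<\frac{\sigma_n}{\tau_0}r_0$. Dilating with $\rho_n=\tau/\sigma_n\to1$ produces $w_n\in S(\tau)$, and the uniform bound $T(u_n)<r_0$ controls every energy component, namely $\left\| \nabla u_n\right\|_p^p$, $[u_n]_{s,p}^p$, $\left\| u_n\right\|_{p^*}^{p^*}$ via \eqref{S}, and $A(u_n)$ via \autoref{prop 3.1}. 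Consequently $T(w_n)<r_0$ for large $n$ and $E(w_n)=E(u_n)+o(1)$, so $m(\tau)\le E(w_n)\le m(\sigma_n)+\varepsilon+o(1)$; sending $n\to\infty$ and $\varepsilon\to0$ gives $m(\tau)\le\liminf_{n}m(\sigma_n)$. Combining the two bounds proves that $\phi(\tau)=m(\tau)$ is continuous.

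The step demanding the most care is checking that the dilated competitors remain inside $\Lambda$, i.e. that the transplanted function still satisfies $T<r_0$. This rests entirely on the strict inequality $T<\frac{\tau}{\tau_0}r_0<r_0$ for negative-energy elements coming from \autoref{Lemma 4.1}, which leaves room to dilate, together with the uniform boundedness of all energy components on $\Lambda$, which is what lets the factors $\delta^{\pm p/N},\delta^{\pm sp/N},\delta^{\alpha/N}\to1$ move the energy continuously.
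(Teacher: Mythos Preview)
Your proof is correct and follows the same two-sided scheme as the paper: transplant a negative-energy near-minimizer from one mass to a nearby mass, use the strict gap $T<\frac{\tau}{\tau_0}r_0$ from \autoref{Lemma 4.1} to keep the transplant inside $\Lambda$, and exploit the uniform bound $T<r_0$ to control all energy components so that $E$ moves continuously. The only difference is the transplant map: the paper uses the multiplicative scaling $u\mapsto(\tau/\tau_n)^{1/p}u$, while you use the spatial dilation $u\mapsto u(\delta^{-1/N}\,\cdot)$ already employed in \autoref{Lemma 4.4}. Both maps send $S(\sigma)$ to $S(\tau)$ and depend continuously on the mass ratio; your choice has the mild advantage that the scaling exponents of $T$ match those appearing in \autoref{Lemma 4.4}, so the verification that the transplant stays in $\Lambda$ reuses that computation verbatim, whereas the paper's multiplicative scaling gives the simpler formula $T(v_n)=\frac{\tau}{\tau_n}T(u_n)$ directly. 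Either way the argument goes through without change.
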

	\begin{proof}
		Let $\{\tau_n\}\rightarrow \tau$ in $(0,\tau_0)$. For every $n\in \mathbb{N}$, let $u_n\in \Lambda(\tau_n)$ be such that:
		\begin{equation}\label{4.9}
			E(u_n)<m(\tau_n)+\frac{1}{n} \text{ and } E(u_n)<0.
		\end{equation}
		Define $v_n:=(\frac{\tau}{\tau_n})^{\frac{1}{p}}u_n$, clearly, $\{v_n\}$ is a sequence in $S(\tau)$, and $T(v_n)=\frac{\tau}{\tau_n}T(u_n)$ for every $n\in \mathbb{N}$. Now, since $E(u_n)<0$, then by \eqref{E(u)} and \autoref{Lemma 4.1}, we get $T(u_n)<\frac{\tau_n}{\tau_0}r_0$, and hence
		$$T(v_n)=\frac{\tau}{\tau_n}T(u_n)<\frac{\tau}{\tau_0}<r.$$
		Therefore, $\{v_n\}$ is a sequence in $\Lambda(\tau)$. Then, by \eqref{4.9} we get
		\begin{eqnarray}\label{4.10}
			m(\tau) & \leq & E(v_n) =\frac{\tau}{p\tau_n}\left\| \nabla u_n \right\|_p^p+\frac{\tau}{p\tau_n}[u_n]_{s,p}^p-\left(\frac{\tau}{\tau_n}\right)^{\frac{p^*}{p}}\frac{\left\| u_n\right\|_{p^*}^{p^*}}{p^*}-\left(\frac{\tau}{\tau_n}\right)^{\frac{2q}{p}}\frac{A(u_n)}{2q}\nonumber\\
			& = & E(u_n)+\left(\frac{\tau}{\tau_n}-1\right)\frac{\left\| \nabla u_n\right\|}{p}+\left(\frac{\tau}{\tau_n}-1\right)\frac{[u_n]_{s,p}^p}{p}-\left(\left(\frac{\tau}{\tau_n}\right)^{\frac{p^*}{p}}-1\right)\frac{\left\| u_n \right\|_{p^*}^{p^*}}{p^*}\nonumber\\
			& & -\left(\left(\frac{\tau}{\tau_n}\right)^{\frac{2q}{p}}-1\right)\frac{A(u_n)}{2q} = E(u_n)+o_n(1)\nonumber\\
			&  < & m(\tau_n)+\frac{1}{n}+o_n(1),
		\end{eqnarray}
		hence $m(\tau)\leq \displaystyle \liminf m(\tau_n)$. Now, let $\{w_n\}\subset \Lambda (\tau)$ be a minimizing sequence for $m(\tau)$ with $E(w_n)<0$. Setting $\bar{w}_n=(\frac{\tau_n}{\tau})^{\frac{1}{p}}w_n$, by \eqref{E(u)} and \autoref{Lemma 4.1} we get $\bar{w}_n\in \Lambda(\tau_n)$. Thus, as done in  \eqref{4.10} we get:
		\begin{eqnarray*}
			m(\tau_n)  \leq  E(\bar{w}_n)= E(w_n)+o_n(1) =m(\tau)+o_n(1),
		\end{eqnarray*}
		hence $m(\tau)\geq \limsup m(\tau_n)$. Therefore, $\phi(\tau)=m(\tau)=\displaystyle \lim_{n\rightarrow \infty} m(\tau_n)=\lim_{n\rightarrow \infty} \phi(\tau_n)$ and since, $\tau \in (0,\tau_0)$ is arbitrary, $\phi$ is continuous.
	\end{proof}
	\begin{proposition}\label{prop 4.1}
		If  $\{u_n\}\subset \Lambda(\tau)$ is a minimizing sequence for $E$ on $\Lambda(\tau)$, then either of the following holds: 
		\begin{enumerate}
			\myitem{(1)}\label{1} $\displaystyle \limsup_{n\rightarrow \infty} \left(\sup_{z\in \mathbb{R}^N}\int_{B_1(z)}|u_n|^2dx\right)=0.$
			\myitem{(2)}\label{2} There exists $u\in \Lambda (\tau)$ and a sequence $\{y_n\}\subset \mathbb{R}^N$ such that $u_n(.-y_n)\rightarrow u$ in $W^{1,p}(\mathbb{R}^N)$ as $n\rightarrow \infty$ and $E(u)=m(\tau)$.
		\end{enumerate}
	\end{proposition}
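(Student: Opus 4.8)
The plan is to run Lions' first concentration--compactness dichotomy on the mass $\{|w_n|^2\}$, ruling out the splitting alternative by the strict subadditivity of \autoref{Lemma 4.4} and ruling out critical concentration by the a priori size bound furnished by \eqref{E(u)} and \autoref{Lemma 4.1}. First I would record that $\{u_n\}$ is bounded in $W^{1,p}(\mathbb{R}^N)$, since $\|u_n\|_p^p=\tau$ and $T(u_n)<r_0$. Because $m(\tau)<0$ by \autoref{Lemma 4.2}, we have $E(u_n)<0$ for large $n$, and then \eqref{E(u)} together with \autoref{Lemma 4.1} yields the quantitative bound $T(u_n)<\tfrac{\tau}{\tau_0}r_0<r_0$. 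Assuming \ref{1} fails, I extract $\delta>0$, a subsequence, and points $z_n$ with $\int_{B_1(z_n)}|u_n|^2\,dx\geq\delta$; setting $y_n:=-z_n$ and $w_n:=u_n(\cdot-y_n)$, translation invariance of $T$, $\|\cdot\|_{p^*}$, $A$ and of the constraint shows $\{w_n\}$ is again a minimizing sequence in $\Lambda(\tau)$ with $\int_{B_1(0)}|w_n|^2\geq\delta$. Passing to a weak limit $w_n\rightharpoonup u$ in $W^{1,p}(\mathbb{R}^N)$ and using the compact embedding $W^{1,p}(B_1)\hookrightarrow L^2(B_1)$ gives $\int_{B_1}|u|^2\geq\delta$, so $u\neq0$.

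Next I would set $\tau_1:=\|u\|_p^p\in(0,\tau]$ and rule out the dichotomy $0<\tau_1<\tau$. Writing $v_n:=w_n-u\rightharpoonup0$, I would apply the Brezis--Lieb lemma to $\|\nabla\cdot\|_p^p$ and $\|\cdot\|_{p^*}^{p^*}$, its fractional analogue to $[\cdot]_{s,p}^p$, and the nonlocal Brezis--Lieb lemma to $A$, obtaining the splittings $E(w_n)=E(u)+E(v_n)+o(1)$ and $\|v_n\|_p^p\to\tau_2:=\tau-\tau_1>0$. Since $T(u)\leq\liminf T(w_n)\leq\tfrac{\tau}{\tau_0}r_0<r_0$, we have $u\in\Lambda(\tau_1)$, so $E(u)\geq m(\tau_1)$; rescaling $v_n$ onto $S(\tau_2)$ (the scaling factor tends to $1$) and invoking the same $T$-bound places the rescaled functions in $\Lambda(\tau_2)$, giving $\liminf E(v_n)\geq m(\tau_2)$. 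Hence $m(\tau)=\lim E(w_n)=E(u)+\lim E(v_n)\geq m(\tau_1)+m(\tau_2)$, and combined with the reverse inequality from \autoref{Lemma 4.4} this forces equality, whence $E(u)=m(\tau_1)$; thus $m(\tau_1)$ is attained, contradicting the strict inequality of \autoref{Lemma 4.4}. Therefore $\tau_1=\tau$, and $w_n\to u$ in $L^p$, hence in $L^r$ for $p\leq r<p^*$ by interpolation with the uniform $L^{p^*}$-bound; since $q<p_\alpha^*$ this also gives $A(w_n)\to A(u)$.

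The hard part will be upgrading this to strong convergence in $W^{1,p}$, where the critical Sobolev term is the genuine obstacle: a concentrating bubble carries vanishing $L^p$-mass and is therefore invisible to the mass dichotomy above. Writing $d:=\lim T(v_n)$ and $L:=\lim\|v_n\|_{p^*}^{p^*}$, the Brezis--Lieb splitting together with $E(u)\geq m(\tau)$ yields the bubble relation $\tfrac1p d-\tfrac1{p^*}L=m(\tau)-E(u)\leq0$, while the Sobolev inequality \eqref{S} gives $d\geq S\,L^{p/p^*}$. If $L>0$, these two inequalities combine to force $d\geq S^{N/p}\big(\tfrac{N}{N-p}\big)^{(N-p)/p}$, whereas the a priori bound gives $d\leq\tfrac{\tau}{\tau_0}r_0<r_0$; since the threshold $r_0$ produced by \autoref{Lemma 4.1} lies below this critical bubble level, we reach a contradiction, so $L=0$ and consequently $d=0$. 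Thus $w_n\to u$ strongly in $W^{1,p}(\mathbb{R}^N)$, and the continuity of $E$ under strong convergence gives $E(u)=\lim E(w_n)=m(\tau)$, so $u\in\Lambda(\tau)$ is a minimizer and alternative \ref{2} holds.

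I expect the two delicate points to be the following. First, verifying the nonlocal Brezis--Lieb identity for $A$ in the range $\tfrac p2\big(\tfrac{N+\alpha}{N}\big)<q<\bar q_s$, which is exactly where the exponent $\tfrac{2qN}{N+\alpha}$ stays strictly between $p$ and $p^*$ so that both the $L^r$-convergence of $w_n$ and the splitting are available. Second, checking quantitatively that the fixed radius $r_0$ of \autoref{Lemma 4.1} indeed sits below the Sobolev bubble threshold $S^{N/p}(N/(N-p))^{(N-p)/p}$; this is what ultimately excludes critical concentration and is secured by taking $\tau_0$, and hence $r_0=r_{\tau_0}$, small.
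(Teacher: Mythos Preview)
Your argument is correct and follows the same overall concentration--compactness scheme as the paper's proof: extract a nontrivial weak limit after translation, use Brezis--Lieb splittings together with the strict subadditivity of \autoref{Lemma 4.4} to exclude mass dichotomy, and then exclude critical bubbling by exploiting the a priori bound $T(v_n)<r_0$ coming from \eqref{E(u)} and \autoref{Lemma 4.1}. The one structural difference is that the paper inserts an additional iterated concentration step (its Claim~2): it runs a second Lions dichotomy on the remainder $v_n=w_n-u$, extracts a further profile $v$, and derives a three-way splitting $m(\tau)\ge m(b)+m(d)+m(l)$ in order to conclude via Lions' vanishing lemma that $v_n\to 0$ in $L^r$ and hence $A(v_n)\to 0$. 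You bypass this entirely by first showing $\|v_n\|_p\to 0$ directly from subadditivity (the paper's Claim~3, which in fact does not rely on Claim~2) and then obtaining $A(v_n)\to 0$ by interpolation between $L^p$ and $L^{p^*}$; this is shorter and avoids the second profile extraction. Your exclusion of the bubble is the same as the paper's Claim~4, just phrased in terms of the limits $d$ and $L$ rather than the pointwise inequality $\frac{T(v_n)}{p}-\frac{\|v_n\|_{p^*}^{p^*}}{p^*}>\beta_0 T(v_n)$; note that the inequality $r_0<S^{N/p}\bigl(\tfrac{N}{N-p}\bigr)^{(N-p)/p}$ you need is an immediate consequence of $h_{\tau_0}(r_0)=0$ (the positive Choquard term in $h_{\tau_0}$ forces $\tfrac{1}{p}>\tfrac{r_0^{(p^*-p)/p}}{p^*S^{p^*/p}}$), so it holds automatically for the fixed $r_0$ and does not require an additional smallness argument.
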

	\begin{proof}
		Let $\{u_n\}\subset \Lambda(\tau)$ be such that $E(u_n)\rightarrow m(\tau)$ as $n\rightarrow \infty$. Suppose that \ref{1} does not hold, then proving \ref{2} will suffice. Now since $\left\| u_n \right\|_p^p=\tau>0$ for all $n\in \mathbb{N}$, then we can find a sequence $\{y_n\}$ in $\mathbb{R}^N$ such that
		\begin{equation}\label{4.11}
			0<\lim_{n\rightarrow \infty}\int_{B_1(0)}|u_n(x-y_n)|^pdx \leq \tau.
		\end{equation}
		Claim 1 : $\{u_n\}$ is bounded in $W^{1,p}(\mathbb{R}^N)$.\\
		Since $\{u_n\}\subset \Lambda(\tau)$, we have $\left\| u_n \right\|^p=T(u_n)+\left\| u_n \right\|_p^p < r_0+\tau$ for all $n\in \mathbb{N}$. Hence, $\{u_n\}$ is bounded in $W^{1,p}(\mathbb{R}^N)$.\\
		Now, let $u\in W^{1,p}(\mathbb{R}^N)$ be such that $\{u_n(.-y_n)\}\rightharpoonup u$, weakly in $W^{1,p}(\mathbb{R}^N)$, upto a subsequence. Then, we have the following:
		\begin{equation}\label{4.12}
			\left\{
			\begin{array}{cc}
				\{u_n(.-y_n)\}  \rightarrow u & \text{ in } L^r_{loc}(\mathbb{R}^N) \text{ for all } r\in [p,p^*)\\
				\{u_n(.-y_n)\}  \rightarrow u  & \text{ almost everywhere in } \mathbb{R}^N.
			\end{array}
			\right.
		\end{equation}
		By \eqref{4.11} and \eqref{4.12}, it is clear that $u\neq 0$ and hence $\left\| u \right\|_p^p>0$. Setting $v_n:= u_n(.-y_n)-u$, by Brezis Lieb lemma, we get
		\begin{equation}\label{4.13}
			\left\{
			\begin{array}{ccc}
				\left\| v_n+u \right\|_p^p & = & \left\| v_n \right\|_p^p+\left\| u \right\|_p^p+o_n(1)\\
				\left\| v_n+u \right\|_{p^*}^{p^*} & = & \left\| v_n \right\|_{p^*}^{p^*}+\left\| u \right\|_{p^*}^{p^*}+o_n(1).\\
			\end{array}
			\right.
		\end{equation}
		Definig $B:W^{1,p}(\mathbb{R}^N)\rightarrow \mathbb{R}$ such that $B(u):=\frac{\left\| u \right\|^p}{p}$, then by \eqref{4.13}, we get:
		\begin{equation}\label{4.14}
			T(v_n+u) = T(v_n)+T(u)+o_n(1),
		\end{equation}
		also, by Brezis-lieb lemma for Reisz potential \cite{Moroz2017Guide}, we have:
		\begin{equation}\label{4.15}
			A(v_n+u) = A(v_n)+A(u)+o_n(1).
		\end{equation}
		Hence, by \eqref{4.13}-\eqref{4.15} we get
		\begin{eqnarray}\label{4.16}
			E(u_n) & = & E(u_n(.-y_n))=\frac{T(v_n+u)}{p}-\frac{\left\| v_n+u \right\|_{p^*}^{p^*}}{p^*}-\frac{A(v_n+u)}{2q}\nonumber\\
			& = & E(v_n)+E(u)+o_n(1).
		\end{eqnarray}
		Claim 2 : $\displaystyle \limsup_{n\rightarrow \infty} \sup_{z\in \mathbb{R}^N} \int_{B_1(z)}|v_n|^pdx=0$.\\
		Let if possible
		$$\limsup_{n\rightarrow \infty} \sup_{z\in \mathbb{R}^N} \int_{B_1(z)}|v_n|^pdx\neq 0,$$
		then by boundedness of $\{v_n\}$, as done above, we can find a sequence $\{z_n\}\subset \mathbb{R}^N$ and $v\in W^{1,p}(\mathbb{R}^N)$ such that upto a subsequence
		\begin{equation*}
			\begin{array}{cc}
				\{v_n(.-z_n)\} \rightharpoonup v & \text{ weakly in } W^{1,p}(\mathbb{R}^N),\\
				\{v_n(.-z_n)\} \rightarrow v & \text{ in } L^r_{loc}(\mathbb{R}^N) \text{ for all } r \in [p,p^*),\\
				\{v_n(.-z_n)\} \rightarrow v & \text{ almost everywhere in } \mathbb{R}^N.
			\end{array}				
		\end{equation*}
		Moreover, since $v\neq 0$, defining $w_n(x):= v_n(x-z_n)-v(x)$ and following the same procedure as done in \eqref{4.13}-\eqref{4.16}:
		\begin{equation}\label{4.17}
			\left\{
			\begin{array}{ccc}
				\left\| v_n \right\|_p^p & = & 	\left\| v \right\|_p^p+ \left\| w_n \right\|_p^p+o_n(1)\\
				T(v_n) & = & T(v)+T(w_n)+o_n(1)\\
				E(v_n) & = & E(v)+E(w_n)+o_n(1).
			\end{array}
			\right.
		\end{equation}
		Let $d:= \left\| u \right\|_p^p$, $b:= \left\| v \right\|_p^p$ and $l:=\tau-d-b$. Since $\{u_n\}\subset \Lambda(\tau)$, then by  \eqref{4.14}, we have $T(u)\leq T(v_n+u)=T(u_n)<r_0$, thus $u \in \Lambda(d)$, similarly, $v\in \Lambda(b)$. Now, using \eqref{4.17} in \eqref{4.13} and \eqref{4.16} we get:
		\begin{equation}\label{4.18}
			\left\{
			\begin{array}{ccc}
				\left\| u_n \right\|_p^p & = & \left\| v \right\|_p^p +\left\| w_n \right\|_p^p+\left\| u \right\|_p^p+o_n(1)\\
				E(u_n) & = & E(v)+E(w_n)+E(u)+o_n(1).
			\end{array}
			\right.
		\end{equation}
		Thus $0\leq \displaystyle \lim_{n\rightarrow \infty} \left\| w_n \right\|_p^p= \tau-b-d=l$.\\
		Case 1: $l>0$.
		Clearly, by \eqref{4.17} and \eqref{4.14}, one can see that $w_n \in \Lambda(\left\| w_n \right\|_p^p)$ for all $n\in \mathbb{N}$ and hence
		\begin{equation*}
			E(w_n) \geq m(\left\| w_n \right\|_p^p).
		\end{equation*}
		Thus by \eqref{4.18}, \autoref{Lemma 4.4} and \autoref{Lemma 4.5}
		\begin{eqnarray*}
			m(\tau) & = & E(u_n)+o_n(1)= E(v)+E(w_n)+E(u)+o_n(1)\\
			& \geq & E(v)+E(u)+m(\left\| w_n \right\|_p^p)+o_n(1) \geq m(b)+m(d) +m(\left\| w_n \right\|_p^p)+o_n(1)\\
			& = & m(b)+m(d)+m(l) \geq m(\tau),
		\end{eqnarray*}
		hence $m(b)$ and $m(d)$ are achieved by $v$ and $u$ respectively. Therefore, by \autoref{Lemma 4.4} we get:
		$$m(\tau) \geq m(b)+m(d)+m(l)> m(b+d)+m(l)\geq m(b+d+l)=m(\tau),$$
		now this is a contradiction.\\
		Case 2: $l=0$.\\
		Now, by \eqref{G_N} we have:
		$$|A(w_n)| \leq C_N \left\| \nabla w_n \right\|_p^{2q\gamma_q}\left\| w_n \right\|_p^{2q(1-\gamma_q)}\leq C' \left\| w_n \right\|_p^{2q(1-\gamma_q)}\rightarrow 0 \text{ as } n \rightarrow \infty,$$
		since $\{w_n\}$ is bounded in $W^{1,p}(\mathbb{R}^N)$ and $\displaystyle \lim_{n\rightarrow \infty}\left\| w_n \right\|_p^p=l=0.$ Thus by \eqref{S} we get
		\begin{eqnarray*}
			E(w_n) & = & \frac{T(w_n)}{p}-\frac{\left\| w_n \right\|_{p^*}^{p^*}}{p^*}-\frac{A(w_n)}{2q} \geq T(w_n)\left(\frac{1}{p}-\frac{T(u)^{\frac{p^*-p}{p}}}{p^*S^{\frac{p^*}{p}}}\right)+o_n(1)\\
			& > & T(w_n)\left(\frac{1}{p}-\frac{r_0^{\frac{p^*-p}{p}}}{p^*S^{\frac{p^*}{p}}}\right)+o_n(1)> T(w_n)h_{\tau_0}(r_0)+o_n(1) \text{ for every } n\in \mathbb{N},
		\end{eqnarray*}
		and hence
		\begin{equation}\label{4.19}
			\lim_{n\rightarrow \infty}E(w_n) \geq 0.
		\end{equation}
		Proceeding as in case 1, we have:
		\begin{equation}\label{4.20}
			m(\tau)  =  E(u_n)+o_n(1) \geq E(v)+E(u)+E(w_n)+o_n(1) \geq m(b)+m(d)\geq m(\tau),
		\end{equation}
		since $l=0$, \eqref{4.20} tells us that $m(b)$ and $m(d)$ are achieved and hence by \autoref{Lemma 4.4} we have:
		$$m(\tau)\geq m(b)+m(d)>m(b+d)=m(\tau).$$
		This leads us to a contradiction. Thus, we have
		\begin{equation*}
			\limsup_{n\rightarrow \infty} \sup_{z\in \mathbb{R}^N} \int_{B_1(z)}|v_n|^pdx = 0.
		\end{equation*}
		Therefore, by Lemma 1.1 of \cite{Lions1984concentration} we get $\{v_n\}\rightarrow 0$ in $L^r(\mathbb{R}^N)$ for all $r\in (p,p^*)$ and hence by \autoref{prop1.1} we have
		\begin{equation}\label{4.21}
			A(v_n) \leq \mu A_{\alpha}C_N \left(\int_{\mathbb{R}^N}|v_n|^{\frac{2Nq}{N+\alpha}}\right)^{\frac{N+\alpha}{N}}\rightarrow 0 \text{ as } n\rightarrow \infty, \text{ for all } q\in \left(\frac{p}{2}\left(\frac{N+\alpha}{N}\right), \bar{q_s}\right).
		\end{equation}
		Claim 3: $\displaystyle \lim_{n\rightarrow \infty} \left\| v_n \right\|_p^p=0$.\\
		Clearly, if $d=\tau$, then by \eqref{4.13}, $\left\| v_n\right\|_p^p=\left\| u_n \right\|_p^p-\left\| u \right\|_p^p+o_n(1)=\tau-d+o_n(1)=o_n(1)$ and we are done. Now, let  if possible, $d \neq \tau$, without loss of generality, we can assume $d<\tau$.  By, \eqref{4.14}, one can see that $v_n\in \Lambda(\left\| v_n \right\|_p^p)$ and hence $E(v_n)\geq m(\left\| v_n \right\|_p^p)$ for all $n\in \mathbb{N}$. Thus, by \eqref{4.16},  \autoref{Lemma 4.5} and \autoref{Lemma 4.4} we get:
		\begin{eqnarray*}
			m(\tau) & = & E(u_n)+o_n(1)= E(v_n)+E(u)+o_n(1) \geq m(\left\| v_n \right\|_p^p)+E(u) +o_n(1)\nonumber\\
			& = & m(\tau-d)+E(u) +o_n(1) \text{ for all } n\in \mathbb{N},
		\end{eqnarray*}
		and hence
		\begin{equation}\label{4.22}
			m(\tau) \geq m(\tau-d)+E(u).
		\end{equation}
		Since, $u\in \Lambda(d)$, we have $E(u)\geq m(d)$. Now, if $E(u)>m(d)$, then by \eqref{4.22} and \autoref{Lemma 4.4} we get
		$$m(\tau)>m(\tau-d)+m(d)\geq m(\tau).$$
		Therefore, by contradiction, we must have $E(u)=m(d)$. Now, since $m(d)$ is achieved, then again by \eqref{4.22} and \autoref{Lemma 4.4} we get :
		$$m(\tau)\geq m(\tau-d)+E(u)=m(\tau-d)+m(d)>m(\tau).$$
		Thus, our assumption that $d<\tau$ was wrong, similarly we can see that $d>\tau$ cannot hold and hence $\displaystyle \lim_{n\rightarrow \infty} \left\| v_n \right\|_p^p=0$.\\
		Claim 4 : $\displaystyle \lim_{n\rightarrow \infty} T(v_n)=0$.\\
		Clearly, if claim 4 holds, then by claim 3 and 4, we can see that $\{v_n\}\rightarrow 0$ in $W^{1,p}(\mathbb{R}^N)$ and hence $u_n(.-y_n)\rightarrow u$. Thus by continuity of $E$, we will get $E(u)=m(\tau)$. Therefore, proving claim 4 will suffice.\\
		Now, since $d=\tau$, we have $E(u)\geq m(\tau)$, also,
		$$m(\tau)=E(u_n)+o_n(1)=E(u)+E(v_n)+o_n(1),$$
		thus,
		$$E(v_n)=m(\tau)-E(u)+o_n(1)\leq o_n(1),$$
		hence \begin{equation}\label{4.23}
			\limsup_{n\rightarrow \infty}E(v_n)\leq 0.
		\end{equation}
		By \autoref{Lemma 4.1} we know that $h_{\tau_0}(r_0)=0$, also, $T(v_n)<r_0$, this implies
		$$\frac{T(v_n)}{p}-\frac{\left\| v_n \right\|_{p^*}^{p^*}}{p^*}\geq \frac{T(v_n)}{p}-\frac{\left\| \nabla v_n \right\|_p^p}{p^*S^{\frac{p^*}{p}}}\geq T(v_n)\left(\frac{1}{p}-\frac{T(v_n)^{\frac{p^*-p}{p}}}{p^*S^{\frac{p^*}{p}}}\right)>\beta_0T(v_n),$$
		where $$\beta_0=\left(\frac{1}{p}-\frac{r_0^{\frac{p^*-p}{p}}}{p^*S^{\frac{p^*}{p}}}\right)=h_{\tau_0(r_0)}+ \frac{C_N\tau_0^{\frac{2q(1-\gamma_q)}{p}}r_0^{\frac{2q\gamma_q-p}{p}}}{2q}>0.$$ Thus,
		$$\limsup_{n\rightarrow \infty} \beta_0T(v_n)\leq \limsup_{n\rightarrow \infty}\left(\frac{T(v_n)}{p}-\frac{\left\| v_n \right\|_{p^*}^{p^*}}{p^*}\right)=\limsup_{n\rightarrow \infty}\left(E(v_n)+\frac{A(v_n)}{2q}\right)\leq 0,$$
		by \eqref{4.21} and \eqref{4.23}. Therefore, $\displaystyle \lim_{n\rightarrow \infty}T(v_n)=0$ and we are done.
	\end{proof}
	\noindent Using the above proposition and properties of $m(\tau)$, let us prove the main Existence result for $\frac{p}{2}\left(\frac{N+\alpha}{N}\right)<q<\bar{q_s}$.
	\begin{myproof}{Theorem}{\ref{Theorem 1.2}}
		Let $\{u_n\}\subset \Lambda(\tau)$ be a minimizing sequence for $m(\tau)$. By \autoref{Lemma 4.2} we have $m(\tau)<0$, now if $\displaystyle \limsup_{n\rightarrow \infty} \left(\sup_{z\in \mathbb{R}^N}\int_{B_1(z)}|u_n|^2dx\right)=0$, then as done in \autoref{prop 4.1}, we will get $\{u_n\}\rightarrow 0$ in $L^r(\mathbb{R}^N)$ for all $r\in (p,p^*)$ and hence $\{A(u_n)\}\rightarrow 0$ as $n\rightarrow \infty$. Further, proceeding as in \eqref{4.19}, we will end up getting $m(\tau)\geq 0$, which is a contradiction. Thus
		$$\limsup_{n\rightarrow \infty} \left(\sup_{z\in \mathbb{R}^N}\int_{B_1(z)}|u_n|^2dx\right) \neq 0.$$
		Now, \autoref{prop 4.1} gives us the existence of a sequence $\{y_n\}$ in $\mathbb{R}^N$ and $u_{\tau}\in \Lambda(\tau)$ such that
		$$\{u_n(.-y_n)\}\rightarrow u_{\tau} \text{ in } W^{1,p}(\mathbb{R}^N) \text{ as } n\rightarrow \infty,$$
		and $E(u)=m(\tau)$. Now, let $u_{\tau}^*$ be the symmetric decreasing rearrangement of $u_{\tau}$. Using classical rearrangement inequalities \cite{Giacomoni2024Normalized}, we have :
		\begin{eqnarray*}
			[u^*_{\tau}]_{s,p}^p & = & \int_{\mathbb{R}^N} \int_{\mathbb{R}^N}\frac{|u_{\tau}^*(x)-u_{\tau}^*(y)|^{p}}{|x-y|^{N+sp}}dxdy = \int_{\mathbb{R}^N}\frac{1}{|z|^{N+sp}}\left(\int_{\mathbb{R}^N}|u_{\tau}^*(x+z)-u_{\tau}^*(x)|^pdx\right)dz\\
			& \leq & \int_{\mathbb{R}^N}\frac{1}{|z|^{N+sp}}\left(\int_{\mathbb{R}^N}\frac{|u_{\tau}(x+z)-u_{\tau}(x)|^p}{|x-y|^{N+sp}}dx\right)dz=[u_{\tau}]_{s,p}^p,
		\end{eqnarray*}
		$\left\| u_{\tau}^*\right\|_{p^*}^{p^*}=\left\| u_{\tau}\right\|_{p^*}^{p^*}\;;\; \left\| \nabla u_{\tau}^*\right\|_p^p\leq \left\| \nabla u_{\tau} \right\|_p^p \text{ and } A(u_{\tau}^*)\geq A(u_{\tau}).$ Therefore, $T(u_{\tau}^*)\leq T(u_{\tau})<r_0$, hence $E(u_{\tau}^*)\geq m(\tau)$, since $u_{\tau}^*\in \Lambda(\tau)$. Thus,
		$$m(\tau)\leq E(u_{\tau}^*)=\frac{T(u_{\tau}^*)}{p}-\frac{\left\| u_{\tau}^*\right\|_{p^*}^{p^*}}{p^*}-\frac{A(u_{\tau}^*)}{2q}\leq \frac{T(u_{\tau})}{p}-\frac{\left\| u_{\tau}\right\|_{p^*}^{p^*}}{p^*}-\frac{A(u_{\tau})}{2q}=m(\tau).$$
		Above calculations tell us that $E(u_{\tau}^*)=m(\tau)$, and hence $m(\tau)$ is achieved by a radially symmetric function in $W^{1,p}(\mathbb{R}^N)$. Moreover, since $E(u_{\tau}^*)=m(\tau)<0$ and $u_{\tau}^*$ solves \eqref{1.1} for  $\lambda=\lambda_\tau$, we get
		\begin{eqnarray*}
			\lambda_\tau \left\| u_{\tau}^*\right\|_p^p & = &T(u_{\tau}^*)-A(u_{\tau}^*)-\left\| u_{\tau}^*\right\|_{p^*}^{p^*} = pm(\tau)+\frac{pA(u_{\tau}^*)}{2q}+\frac{p\left\| u_{\tau}^*\right\|_{p^*}^{p^*}}{p^*}-A(u_{\tau}^*)-\left\| u_{\tau}^*\right\|_{p^*}^{p^*}\\
			& = & pm(\tau) +\left(\frac{p}{2q}-1\right)A(u_{\tau}^*)+\left(\frac{p}{p^*}-1\right)\left\| u_{\tau}^* \right\|_{p^*}^{p^*}<0,
		\end{eqnarray*}
		therefore, $\lambda_\tau<0$.
	\end{myproof}
	
	\section{Case  \texorpdfstring{$2$}{TEXT} 
		: \texorpdfstring{$\bar{q}_s \leq q < \frac{p}{2}\left(\frac{N+\alpha}{N-p}\right)$}
		{TEXT} }
	Let $S_r(\tau)=S(\tau)\cap W_r^{1,p}(\mathbb{R}^N)$, $P_r(\tau)=P(\tau)\cap W_r^{1,p}(\mathbb{R}^N)$ and $m_r(\tau)=\displaystyle \inf_{u\in P_r(\tau)}E(u)$. Define $\Phi:W^{1,p}(\mathbb{R}^N)\times \mathbb{R}\rightarrow \mathbb{R}$ as
	$$\Phi (u,t):= E(t\star u )=\frac{e^{pt}}{p}\left\| \nabla u \right\|_p^p+\frac{e^{spt}}{p}[u]_{s,p}^p-\frac{e^{p^*t}}{p^*}\left\| u \right\|_{p^*}^{p^*}-\frac{e^{2q\gamma_qt}}{2q}A(u),$$
	clearly, $\Phi \in C^1$.
	\begin{lemma}\label{Lemma 5.1}
		Let $\tau>0$, then for any fixed $u\in S_r(\tau)$, we have:
		\begin{enumerate}
			\item $T(t\star u )\rightarrow 0$ and $\Phi (u,t)\rightarrow 0^+$ as $t\rightarrow -\infty$,
			\item $T(t\star u)\rightarrow +\infty$ and $\Phi(u,t)\rightarrow -\infty$ as $t\rightarrow +\infty$.
		\end{enumerate}
		In particular, for $q=\bar{q}_s$, the above result holds for all $\tau\in (0,\bar{\tau_s})$ where $\bar{\tau_s}=(\frac{2\bar{q}_s}{pC_N})^{\frac{N}{\alpha+sp}}$.
	\end{lemma}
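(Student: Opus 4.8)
The plan is to read off everything from the explicit scaling of the fiber map. First I would record, straight from the definition $t\star u(x)=e^{Nt/p}u(e^tx)$, the identities
\begin{align*}
T(t\star u) &= e^{pt}\|\nabla u\|_p^p+e^{spt}[u]_{s,p}^p,\\
\Phi(u,t) &= \frac{e^{pt}}{p}\|\nabla u\|_p^p+\frac{e^{spt}}{p}[u]_{s,p}^p-\frac{e^{p^*t}}{p^*}\|u\|_{p^*}^{p^*}-\frac{e^{2q\gamma_qt}}{2q}A(u).
\end{align*}
The whole analysis then reduces to comparing the four exponents $p$, $sp$, $p^*$, $2q\gamma_q$ and tracking which term dominates in each limit. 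Since $u\in S_r(\tau)$ with $\tau>0$ is nonzero (hence nonconstant), all four quantities $\|\nabla u\|_p$, $[u]_{s,p}$, $\|u\|_{p^*}$ and $A(u)$ (the last because $\mu>0$) are strictly positive.

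For $t\to-\infty$ every exponential vanishes, so $T(t\star u)\to0$ is immediate. For the sign of $\Phi$ the point is that $0<sp<p<p^*$, so the fractional seminorm term, carrying the smallest of the first three exponents, decays slowest. I would compute $2q\gamma_q=\frac{2qN}{p}-(N+\alpha)$, note that $q\mapsto 2q\gamma_q$ is strictly increasing, and check that $2q\gamma_q=sp$ exactly at $q=\bar q_s$. Hence for $q>\bar q_s$ one has $2q\gamma_q>sp$, the $sp$-term is strictly dominant with positive coefficient, and $\Phi(u,t)\to0^+$ with no constraint on $\tau$.

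The delicate case, which is the main obstacle, is $q=\bar q_s$, where $sp=2q\gamma_q$ and the seminorm term competes with the Choquard term. Here the leading coefficient is $\frac{[u]_{s,p}^p}{p}-\frac{A(u)}{2\bar q_s}$, and I would control $A(u)$ by the fractional Gagliardo--Nirenberg bound \eqref{fractionalGN}. At $q=\bar q_s$ one has $\frac{2q\gamma_q}{s}=p$ and $\frac{2q(s-\gamma_q)}{s}=2\bar q_s-p$, so \eqref{fractionalGN} reads $A(u)\le C_N[u]_{s,p}^p\,\tau^{(2\bar q_s-p)/p}$. The leading coefficient is then at least $[u]_{s,p}^p\bigl(\tfrac1p-\tfrac{C_N}{2\bar q_s}\tau^{(2\bar q_s-p)/p}\bigr)$, which is strictly positive precisely when $\tau^{(2\bar q_s-p)/p}<\frac{2\bar q_s}{pC_N}$. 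Using $2\bar q_s-p=\frac{p(\alpha+sp)}{N}$, equivalently $\frac{p}{2\bar q_s-p}=\frac{N}{\alpha+sp}$, this is exactly $\tau<\bar\tau_s=\bigl(\frac{2\bar q_s}{pC_N}\bigr)^{N/(\alpha+sp)}$, which yields $\Phi(u,t)\to0^+$ on the stated range of $\tau$ and pins down the threshold in the lemma.

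Finally, for $t\to+\infty$ no restriction is needed: $T(t\star u)\ge e^{pt}\|\nabla u\|_p^p\to+\infty$, while in $\Phi$ the dominant term is the one with the largest exponent $\max\{p^*,2q\gamma_q\}$. Since $p^*>p>sp$, this maximum strictly exceeds the largest positive exponent $p$ and is carried by one of the two negative terms (both $\|u\|_{p^*}^{p^*}$ and $A(u)$ appear with a minus sign), so $\Phi(u,t)\to-\infty$ regardless of whether $2q\gamma_q$ sits above or below $p^*$.
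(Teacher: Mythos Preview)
Your proposal is correct and follows essentially the same route as the paper: both arguments read off the scaling of $T(t\star u)$ and $\Phi(u,t)$, compare the exponents $sp<p<p^*$ and $2q\gamma_q$, and in the borderline case $q=\bar q_s$ invoke the fractional Gagliardo--Nirenberg inequality \eqref{fractionalGN} to show that the leading coefficient $\tfrac{1}{p}-\tfrac{C_N}{2\bar q_s}\tau^{(\alpha+sp)/N}$ is positive exactly when $\tau<\bar\tau_s$. Your write-up is in fact more explicit than the paper's (you verify $2q\gamma_q=sp$ at $q=\bar q_s$, justify positivity of all four quantities, and handle the $t\to+\infty$ limit by noting that $\max\{p^*,2q\gamma_q\}>p$ regardless of the ordering), but the underlying idea is identical.
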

	\begin{proof}
		Since, $$T(t\star u )=e^{pt}\left\| \nabla u \right\|_p^p+e^{spt}[u]_{s,p}^p,$$ clearly, $T(t\star u )\rightarrow 0$ as $t\rightarrow -\infty$ and $T(t\star u)\rightarrow +\infty$ as $t\rightarrow +\infty$. For $q=\bar{q}_s$, by \eqref{fractionalGN} we have
		\begin{eqnarray*}
			\Phi (u,t) & = & \frac{e^{pt}}{p}\left\| \nabla u \right\|_p^p+\frac{e^{spt}}{p}[u]_{s,p}^p-\frac{e^{p^*t}}{p^*}\left\| u \right\|_{p^*}^{p^*}-\frac{e^{spt}}{2\bar{q}}A(u)\\
			& \geq & \frac{e^{spt}}{p}[u]_{s,p}^p-\frac{e^{spt}C_N}{2\bar{q}}[u]_{s,p}^p\tau^{\frac{\alpha+sp}{N}}-\frac{e^{p^*t}}{p^*}\left\| u\right\|_{p^*}^{p^*}\\
			& = & e^{spt}[u]_{s,p}\left(\frac{1}{p}-\frac{C_N \tau^{\frac{\alpha+sp}{N}}}{2\bar{q}}\right)-\frac{e^{p^*t}}{p^*}\left\| u\right\|_{p^*}^{p^*}.
		\end{eqnarray*}
		Therefore, 
		$$\Phi(u,t) \rightarrow \left\{ \begin{array}{cc}
			0^+ & \text{ as } t\rightarrow -\infty,\\
			-\infty & \text{ as } t\rightarrow +\infty,
		\end{array}
		\right.
		$$
		for all $\tau< \bar{\tau_s}$. Also, since for  $\bar{q}_s<q<\frac{p}{2}(\frac{N+\alpha}{N-p})$, we have $sp<2q\gamma_q $ and $p<p^*$ we get the required result.
	\end{proof}
	\noindent Define $$A_k:=\{ u\in S_r(\tau): T(u)<k\}$$
	and 
	$$\partial A_k:=\{ u\in S_r(\tau): T(u)=k\}.$$
	\begin{lemma}\label{Lemma 5.2}
		There exists $k_2>k_1>0$ such that 
		\begin{equation}\label{5.1}
			0< \sup_{u\in A_{k_1}}E(u)<\inf_{u\in \partial A_{k_2}}E(u),
		\end{equation}
		and $E(u)$, $M(u)>0$ for all $u\in A_{k_2}$, whenever $\bar{q}_s<q<\frac{p}{2}(\frac{N+\alpha}{N-p})$ with $\tau>0$ and $q=\bar{q}_s$ with $\tau\in (0,\bar{\tau_s})$.
	\end{lemma}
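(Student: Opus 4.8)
The plan is to reduce the whole statement to two one–variable estimates in the quantity $T(u)$, exploiting that in the present range the Choquard term is of \emph{higher} order than the leading term near $T(u)=0$. First I would bound $E$ from below on $S_r(\tau)$. Using \eqref{S} in the form $\left\|u\right\|_{p^*}^{p^*}\le S^{-p^*/p}\left\|\nabla u\right\|_p^{p^*}\le S^{-p^*/p}T(u)^{p^*/p}$ together with the fractional Gagliardo–Nirenberg estimate \eqref{fractionalGN}, which after $[u]_{s,p}^p\le T(u)$ reads $A(u)\le C_N\,T(u)^{\frac{2q\gamma_q}{sp}}\tau^{\frac{2q(s-\gamma_q)}{sp}}$, I obtain $E(u)\ge T(u)\,\tilde h_\tau(T(u))$ with
$$\tilde h_\tau(r)=\frac1p-\frac{1}{p^*S^{p^*/p}}r^{\frac{p^*-p}{p}}-\frac{C_N\tau^{\frac{2q(s-\gamma_q)}{sp}}}{2q}r^{\frac{2q\gamma_q-sp}{sp}}.$$
The decisive point is that $\bar{q}_s<q$ forces $2q\gamma_q>sp$, so the exponent $\tfrac{2q\gamma_q-sp}{sp}$ is strictly positive and $\tilde h_\tau(0^+)=1/p>0$ for every $\tau>0$; in the borderline case $q=\bar{q}_s$ that exponent vanishes and $\tilde h_\tau(0^+)=\tfrac1p-\tfrac{C_N}{2\bar{q}_s}\tau^{(\alpha+sp)/N}$, which is positive exactly when $\tau<\bar{\tau}_s$. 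This is precisely why the fractional form of \autoref{prop 3.1} (rather than \eqref{G_N}) is the right tool: it is the one producing a power $\ge1$ of $T(u)$ throughout the range. Since $\tilde h_\tau$ is continuous, positive near $0$ and tends to $-\infty$, it has a first zero $\rho>0$ with $\tilde h_\tau>0$ on $(0,\rho)$.

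Next I would treat $M$ identically. Because $s<1$ we have $\left\|\nabla u\right\|_p^p+s[u]_{s,p}^p\ge sT(u)$, and the same two estimates give $M(u)\ge T(u)\,\tilde g_\tau(T(u))$ where
$$\tilde g_\tau(r)=s-\frac{1}{S^{p^*/p}}r^{\frac{p^*-p}{p}}-\gamma_qC_N\tau^{\frac{2q(s-\gamma_q)}{sp}}r^{\frac{2q\gamma_q-sp}{sp}}.$$
Again $\tilde g_\tau(0^+)=s>0$ when $q>\bar{q}_s$, while for $q=\bar{q}_s$ one has $\tilde g_\tau(0^+)=s-\gamma_{\bar{q}_s}C_N\tau^{(\alpha+sp)/N}$, and a short computation using $\gamma_{\bar{q}_s}=\tfrac{Ns}{N+\alpha+sp}$ shows this is positive exactly for $\tau<\bar{\tau}_s$, the very same threshold as for $E$. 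Hence $\tilde g_\tau>0$ on some $(0,\rho')$.

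With these two scalar functions in hand the conclusion is mechanical. I would fix $k_2\in(0,\min\{\rho,\rho'\})$. For every $u\in A_{k_2}$ one has $0<T(u)<k_2$ (note $T(u)>0$ since $u\in S_r(\tau)$ is nonzero), so $E(u)\ge T(u)\tilde h_\tau(T(u))>0$ and $M(u)\ge T(u)\tilde g_\tau(T(u))>0$, which gives the required positivity on $A_{k_2}$ and in particular $\inf_{u\in\partial A_{k_2}}E(u)\ge k_2\tilde h_\tau(k_2)>0$. On the other hand, discarding the nonnegative terms $\tfrac{1}{p^*}\left\|u\right\|_{p^*}^{p^*}$ and $\tfrac{1}{2q}A(u)$ yields the crude bound $E(u)\le\tfrac1p T(u)\le\tfrac{k_1}{p}$ for all $u\in A_{k_1}$; and since along a fiber $t\star u_0$ the energy is strictly positive once $T$ is small (by the above, or directly by \autoref{Lemma 5.1}), $\sup_{u\in A_{k_1}}E(u)>0$. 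Finally I would pick $k_1\in(0,k_2)$ small enough that $\tfrac{k_1}{p}<k_2\tilde h_\tau(k_2)$, which delivers $0<\sup_{A_{k_1}}E\le\tfrac{k_1}{p}<k_2\tilde h_\tau(k_2)\le\inf_{\partial A_{k_2}}E$ and closes \eqref{5.1}.

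The only genuinely delicate step is the exponent bookkeeping of the first two paragraphs: one must recognize that the fractional Gagliardo–Nirenberg inequality converts the structural inequality $2q\gamma_q>sp$ into a super-linear power of $T(u)$, and then verify that the endpoint thresholds forced by $\tilde h_\tau(0^+)>0$ and $\tilde g_\tau(0^+)>0$ coincide and equal exactly $\bar{\tau}_s$. Everything afterwards—the coarse upper bound on $A_{k_1}$ and the ordered choice of the radii $k_1<k_2$—is routine.
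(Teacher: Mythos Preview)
Your proposal is correct and follows essentially the same route as the paper: both arguments use the Sobolev inequality \eqref{S} together with the fractional Gagliardo--Nirenberg estimate \eqref{fractionalGN} to bound $E(u)$ and $M(u)$ below by one-variable functions of $T(u)$, observe that $2q\gamma_q>sp$ (respectively $\tau<\bar\tau_s$ at the endpoint) makes the Choquard contribution higher order near $T(u)=0$, and then couple this with the crude upper bound $E(u)\le T(u)/p$ to pick the ordered radii $k_1<k_2$. Your packaging via the auxiliary functions $\tilde h_\tau,\tilde g_\tau$ and the explicit verification that the $M$-threshold at $q=\bar q_s$ coincides with $\bar\tau_s$ is slightly more detailed than the paper's, but the mathematical content is the same.
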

	\begin{proof}
		For $q=\bar{q}_s$, by \eqref{fractionalGN} and \eqref{S} we have:
		\begin{equation*}
			E(u)  \geq  \frac{T(u)}{p}-\frac{T(u)^{\frac{p^*}{p}}}{p^*S^{\frac{p^*}{p}}}-\frac{C_NT(u)\tau^{\frac{\alpha+sp}{N}}}{2\bar{q}}=\frac{T(u)}{p}\left(1-\frac{pC_N\tau^{\frac{\alpha+sp}{N}}}{2\bar{q}}\right)-\frac{T(u)^{\frac{p^*}{p}}}{p^*S^{\frac{p^*}{p}}},
		\end{equation*}
		and 
		\begin{equation*}
			M(u) \geq sT(u)-\frac{T(u)^{\frac{p^*}{p}}}{S^{\frac{p^*}{p}}}-\gamma_{\bar{q}}C_NT(u)\tau^{\frac{\alpha+sp}{N}}= s\left(1-\frac{pC_N\tau^{\frac{\alpha+sp}{N}}}{2\bar{q}}\right)T(u)-\frac{T(u)^{\frac{p^*}{p}}}{S^{\frac{p^*}{p}}}.
		\end{equation*}
		Now, since $p^*>p$, then for $\tau<\bar{\tau_s}$, we can find $k_2>0$ small enough so that $E(u)\geq \rho>0$ for all $u\in \partial A_{k_2}$, $M(u)$, $E(u)>0$ for all $u\in A_{k_2}$ and 
		\begin{equation*}
			\inf_{u\in \partial A_{k_2}}E(u)\geq\rho>0.
		\end{equation*}
		Also, since $$E(u)\leq \frac{T(u)}{p},$$
		we can find $0<k_1<k_2$ such that 
		$$0<\sup_{u\in A_{k_1}} E(u) <\rho \leq \inf_{u\in \partial A_{k_2}}E(u).$$
		Now, for $\bar{q}_s<q<\frac{p}{2}\left(\frac{N+\alpha}{N-p}\right)$, we have:
		\begin{equation*}
			E(u)  \geq  \frac{T(u)}{p}-\frac{T(u)^{\frac{p^*}{p}}}{p^*S^{\frac{p^*}{p}}}-\frac{C_N}{2q}T(u)^{\frac{2q\gamma_q}{ps}}\tau^{\frac{2q(1-\gamma_q)}{sp}},
		\end{equation*}
		$$M(u)\geq s T(u)-\frac{T(u)^{\frac{p^*}{p}}}{S^{\frac{p^*}{p}}}-C_N\gamma_qT(u)^{\frac{2q\gamma_q}{sp}}\tau^{\frac{2q(1-\gamma_q)}{sp}},$$
		and 
		$$E(u)\leq \frac{T(u)}{p}.$$
		Since $p^*>p$ and $2q\gamma_q>sp$, following the same procedure as done for the case $q=\bar{q}_s$, we can find $0<k_1<k_2$ satisfying \eqref{5.1} and $E(u)$, $M(u)>0$ for all $u\in A_{k_2}$.
	\end{proof}
	\noindent By \autoref{Lemma 5.1} and \autoref{Lemma 5.2}, we can find $u_1$, $u_2\in S_r(\tau)$ such that 
	\begin{equation}\label{u_1,u_2}
		T(u_1)\leq k_1< k_2<T(u_2) ;\;\; E(u_1)>0>E(u_2) \text{ and } M(u_2)<0.
	\end{equation}
	Define 
	$$r(\tau):= \inf_{\eta \in \Gamma(\tau)}\max_{z\in [0,1]}E(\eta(z)),$$
	where 
	$$\Gamma(\tau)=\{\eta\in C([0,1],S_r(\tau)): \eta(0)=u_1 \text{ and } \eta(1)=u_2\},$$
	and $$\tilde{r}(\tau):=\inf_{\tilde{\eta}\in \tilde{\Gamma}(\tau)}\max_{z\in [0,1]} \Phi(\tilde{\eta}(z)),$$
	where 
	$$\tilde{\Gamma}(\tau)=\{\tilde{\eta}\in C([0,1],S_r(\tau)\times\mathbb{R}): \tilde{\eta}(0)=(u_1,0) \text{ and } \tilde{\eta}(1)=(u_2,0)\}.$$
	Clearly, $r(\tau)\geq \max\{E(u_1),E(u_2)\}:=\sigma_{\tau}$.
	\begin{lemma}\label{Lemma 5.3}
		For any fixed $u\in S_r(\tau)$, the function $I_u(t)$ has a unique critical point $t_u\in \mathbb{R}$ such that $t_u\star u \in P(\tau)$ corresponding to the maxima of $I_u$. In particular, for $q=\bar{q}_s$, the above statement is true for all $\tau \in (0,\bar{\tau_s})$.
	\end{lemma}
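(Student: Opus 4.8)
The plan is to read off critical points of $I_u$ from the Pohozaev function. By \autoref{Lemma 3.2} we already know $I_u'(t)=M(t\star u)$, so a critical point of $I_u$ is exactly a $t$ with $t\star u\in P(\tau)$; hence it suffices to count zeros of $I_u'$. For existence I would invoke \autoref{Lemma 5.1}: since $I_u(t)\to 0^+$ as $t\to-\infty$ and $I_u(t)\to-\infty$ as $t\to+\infty$, and since $I_u'(t)\sim s\,e^{spt}[u]_{s,p}^p>0$ for $t\ll 0$ (the exponent $sp$ being the smallest present in Case $2$), $I_u$ is strictly increasing near $-\infty$ and therefore attains a positive global maximum at some finite $t_u$, which is a critical point. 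For $q=\bar q_s$ this uses $\tau<\bar{\tau_s}$ precisely to keep the effective $sp$-coefficient positive, as in \autoref{Lemma 5.1}.

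For uniqueness I would factor $I_u'(t)=e^{2q\gamma_q t}\bigl(g(t)-\gamma_q A(u)\bigr)$, where $g(t)=e^{(p-2q\gamma_q)t}\|\nabla u\|_p^p+s\,e^{(sp-2q\gamma_q)t}[u]_{s,p}^p-e^{(p^*-2q\gamma_q)t}\|u\|_{p^*}^{p^*}$, so that critical points are the solutions of $g(t)=\gamma_q A(u)$. The cleanest route is to show $I_u''(t_0)<0$ at every critical point, which rules out local minima and forces uniqueness. Using $I_u'(t_0)=0$ to eliminate one term one obtains $I_u''(t_0)=(p-2q\gamma_q)e^{pt_0}\|\nabla u\|_p^p+s(sp-2q\gamma_q)e^{spt_0}[u]_{s,p}^p+(2q\gamma_q-p^*)e^{p^*t_0}\|u\|_{p^*}^{p^*}$. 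When $q\ge\bar q$, i.e. $p\le 2q\gamma_q<p^*$, all three coefficients are $\le 0$ (strictly so for the critical term, since $\|u\|_{p^*}>0$), giving $I_u''(t_0)<0$; when $2q\gamma_q\ge p^*$ I would instead eliminate the critical term to land all coefficients negative. For the endpoint $q=\bar q_s$ ($2q\gamma_q=sp$) I would combine the two $e^{spt}$-terms: by \eqref{fractionalGN} the coefficient $\tfrac1p[u]_{s,p}^p-\tfrac1{2\bar q_s}A(u)$ is positive once $\tau<\bar{\tau_s}$, so $I_u(t)=C_1e^{spt}+C_2e^{pt}-C_3e^{p^*t}$ with all $C_i>0$, and $I_u'$ divided by $e^{spt}$ has a strictly decreasing derivative (one sign change), hence exactly one zero.

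The hard part is the intermediate regime $\bar q_s<q<\bar q$, where $sp<2q\gamma_q<p<p^*$. There the coefficient $(p-2q\gamma_q)$ of the gradient term in $I_u''(t_0)$ is positive, so the second-derivative test is inconclusive, and indeed $g$ need not be monotone: it descends from $+\infty$, passes through a local minimum, rises to a local maximum value $m_2$, and then decreases to $-\infty$, so a priori $g(t)=\gamma_q A(u)$ can have up to three solutions. The point to exploit is that $m_2$ depends only on $\|\nabla u\|_p$, $[u]_{s,p}$ and $\|u\|_{p^*}$ and is \emph{independent of} $\mu$, whereas the target level is $\gamma_q A(u)=\gamma_q\mu\,\mathcal{A}_q(u)$. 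Hence for the fixed nonzero $u$ and $\mu$ large (the standing hypothesis of \autoref{Theorem 1.3}), the level $\gamma_q A(u)$ lies strictly above the entire non-monotone window of $g$, so the horizontal line meets the graph exactly once, at a point where $g$ crosses downward; this recovers the single critical point and identifies it as the maximum. I expect this $\mu$-dependent level comparison in the regime $\bar q_s<q<\bar q$ to be the main obstacle, and the place where the largeness of $\mu$ is genuinely used.
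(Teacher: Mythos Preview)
Your overall framework (existence from \autoref{Lemma 5.1}, uniqueness by counting zeros of $I_u'$) matches the paper, but the uniqueness mechanism is different. The paper never uses a second-derivative test. For $q=\bar q_s$ and for $\bar q_s<q<p^*_\alpha$ it runs a descending Rolle-type count: if $I_u$ had more than one critical point, the asymptotics of \autoref{Lemma 5.1} force at least three, so writing $I_u'(t)=e^{2q\gamma_q t}\bigl(\tilde g(t)-\gamma_qA(u)\bigr)$ with your $\tilde g$, the level $\gamma_qA(u)$ is hit at least three times and $\tilde g$ has at least two critical points. Factoring again, $\tilde g'(t)=e^{(p-2q\gamma_q)t}f(t)$ with
\[
f(t)=(p-2q\gamma_q)\|\nabla u\|_p^p+s(sp-2q\gamma_q)e^{(sp-p)t}[u]_{s,p}^p-(p^*-2q\gamma_q)e^{(p^*-p)t}\|u\|_{p^*}^{p^*},
\]
so $f$ would have at least two zeros, hence (Rolle) a critical point; the paper then argues from $f'(t_0)=0$ and exponent arithmetic that this is impossible. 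For $q=\bar q_s$ the descent stops one level earlier: $g(t)=e^{(p-sp)t}\|\nabla u\|_p^p+s[u]_{s,p}^p-e^{(p^*-sp)t}\|u\|_{p^*}^{p^*}$ has a unique critical point, so it attains each value at most twice. Nowhere does the paper invoke $\mu$ large; the argument is meant to be uniform in $u\in S_r(\tau)$.

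Your second-derivative eliminations are clean and do dispose of $2q\gamma_q\ge p$ and $q=\bar q_s$. The genuine gap is your handling of $\bar q_s<q<\bar q$. The lemma is stated and used (in \autoref{Lemma 5.4} for arbitrary $u\in P_r(\tau)$, and in \autoref{Lemma 5.7} for the test profiles $v_\epsilon$) for \emph{every} $u\in S_r(\tau)$, with no hypothesis on $\mu$. In your level-comparison argument the threshold on $\mu$ depends on the particular $u$, through both $\mathcal A_q(u)$ and the local maximum $m_2=m_2(\|\nabla u\|_p,[u]_{s,p},\|u\|_{p^*})$: for any fixed $\mu$ one can pick $u\in S_r(\tau)$ with $\mathcal A_q(u)$ small (or $m_2$ large) so that $\gamma_q\mu\,\mathcal A_q(u)$ lands inside the three-solution window of $g$. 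Hence ``$\mu$ large'' cannot close the argument uniformly over $S_r(\tau)$. To cover this sub-range you need a device that does not separate $A(u)$ from the remaining three quantities, such as the paper's second factorisation down to $f$, rather than comparing an $A(u)$-dependent level against an $A(u)$-independent graph.
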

	\begin{proof}
		Let $u\in S_r(\tau)$ and $q=\bar{q}_s$. By \autoref{Lemma 3.2}, we know that $t_u\in \mathbb{R}$ is a critical point of $I_u$ if and only if $t_u\star u \in P_r(\tau)$. Now, since
		$$I_u'(t)= e^{pt}\left\| \nabla u \right\|_p^p+se^{spt}[u]_{s,p}^p-e^{p^*t}\left\| u \right\|_{p^*}^{p^*}-\gamma_{\bar{q}_s}e^{spt}A(u),$$
		if $t_u$ is a critical point of $I_u$, then $g(t_u)=\gamma_{\bar{q}_s}A(u)=C_u$ where
		$$g(t)= e^{(p-sp)t_u}\left\| \nabla u \right\|_p^p+s[u]_{s,p}^p-e^{(p^*-sp)t_u}\left\| u \right\|_{p^*}^{p^*}.$$
		Claim : $I_u$ has unique critical point.\\
		Let if possible, $I_u$ has more than one critical points. Now, by \autoref{Lemma 5.1}, we know that $I_u(t)\rightarrow 0^+$ as $t\rightarrow -\infty$ and $I_u(t)\rightarrow -\infty$ as $t\rightarrow +\infty$, one can easily see that in this case, $I_u$ must have atleast three critical. 
		Then $g$ must attain $C_u$ at more than two points, hence $g$ must have atleast two critcal points. But
		$$t_0=\frac{1}{p^*-p}\ln\left(\frac{p(1-s)\left\| \nabla u \right\|_p^p}{(p^*-sp)\left\| u \right\|_{p^*}^{p^*}}\right),$$
		is the only critical point of $g$. Therefore, by contradiction, $I_u$ has exactly one critical point $t_u$ corresponding to its global maxima.\\
		Similarly, for $\bar{q}_s<q<\frac{p}{2}(\frac{N+\alpha}{N-p})$, if $I_u$ has more than one critical points, then $\tilde{g}$ defined as:
		$$\tilde{g}(t)= e^{(p-2q\gamma_q)}\left\| \nabla u \right\|_p^p+se^{(sp-2q\gamma_q)t}[u]_{s,p}^p-e^{(p^*-2q\gamma_q)t}\left\| u \right\|_{p^*}^{p^*}$$
		must attain $\gamma_qA(u)=\tilde{C}_u$ at more than two points and hence have atleast two critical points. That is, 
		$$\tilde{g}'(t)= e^{(p-2q\gamma_q)t}\left((p-2q\gamma_q)\left\| \nabla u \right\| _p^p+s(sp-2q\gamma_q)e^{(sp-p)t}[u]_{s,p}^p-(p^*-2q\gamma_q)e^{(p^*-p)t}\left\| u \right\|_{p^*}^{p^*}\right),$$
		has atleast two roots,  hence 
		$$f(t)=(p-2q\gamma_q)\left\| \nabla u \right\| _p^p+s(sp-2q\gamma_q)e^{(sp-p)t}[u]_{s,p}^p-(p^*-2q\gamma_q)e^{(p^*-p)t}\left\| u \right\|_{p^*}^{p^*}$$
		has atleast one crtical point. Now, if $t_0$ is a critical point of $f$, then we must have
		$$(p^*-2q\gamma_q)=\frac{s(sp-2q\gamma_q)(sp-p)e^{(sp-p)t}[u]_{s,p}^p}{(p^*-p)e^{(p^*-p)t}\left\| u \right\|_{p^*}^{p^*}}>0 \text{ for all } \bar{q}<q<\frac{p}{2}\left(\frac{N+\alpha}{N-p}\right),$$
		that is, 
		$$2q\gamma_q<p^* \text{ for all } \bar{q}_s<q<\frac{p}{2}\left(\frac{N+\alpha}{N-p}\right),$$
		hence we must have $\frac{N+\alpha}{N-p}<\frac{N+\alpha+p^*}{N}$, but since  $p^*=\frac{N}{p}(p^*-p)$, we have:
		$$(N-p)(p^*+N+\alpha)-N(N+\alpha)=N(p^*-p)-\alpha p-p^*p=-\alpha p<0.$$
		Therefore, $f$ does not have any critical point and hence $I_u$ has unique critical point, which corresponds to its maxima.
	\end{proof}
	\begin{lemma}\label{Lemma 5.4}
		$r(\tau)=\tilde{r}(\tau)=m_r(\tau)$.
	\end{lemma}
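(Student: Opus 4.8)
The plan is to prove the two equalities $r(\tau)=\tilde r(\tau)$ and $\tilde r(\tau)=m_r(\tau)$ separately, exploiting that the fiber action $(\zeta,t)\mapsto t\star\zeta$ is continuous on $S_r(\tau)\times\mathbb{R}$, preserves both the constraint $\|\cdot\|_p^p=\tau$ and radial symmetry, and satisfies $\Phi(\zeta,t)=E(t\star\zeta)$ by definition.

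First I would establish $r(\tau)=\tilde r(\tau)$ by exhibiting a correspondence between the two path families. To each base path $\eta\in\Gamma(\tau)$ associate $\tilde\eta(z)=(\eta(z),0)\in\tilde\Gamma(\tau)$; since $\Phi(\eta(z),0)=E(\eta(z))$ and the endpoints match, this gives $\tilde r(\tau)\le r(\tau)$. Conversely, to each augmented path $\tilde\eta=(\zeta,t)\in\tilde\Gamma(\tau)$ associate $\eta(z)=t(z)\star\zeta(z)$; continuity of the action makes $\eta\in C([0,1],S_r(\tau))$, the endpoints reduce to $u_1,u_2$ because $t(0)=t(1)=0$, and $E(\eta(z))=\Phi(\tilde\eta(z))$, so $r(\tau)\le\tilde r(\tau)$. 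Equality follows.

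Second, I would prove $m_r(\tau)\le\tilde r(\tau)$ by a crossing argument. For any $\tilde\eta=(\zeta,t)\in\tilde\Gamma(\tau)$, the projected base path $\eta(z)=t(z)\star\zeta(z)$ satisfies $\eta(0)=u_1$ and $\eta(1)=u_2$; by \eqref{u_1,u_2} and \autoref{Lemma 5.2} (which gives $M>0$ on $A_{k_2}\ni u_1$) we have $M(\eta(0))=M(u_1)>0$ and $M(\eta(1))=M(u_2)<0$. Since $z\mapsto M(\eta(z))$ is continuous, there is $z_0$ with $\eta(z_0)\in P_r(\tau)$, whence $\max_z\Phi(\tilde\eta(z))=\max_z E(\eta(z))\ge E(\eta(z_0))\ge m_r(\tau)$. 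Taking the infimum over $\tilde\Gamma(\tau)$ yields $\tilde r(\tau)\ge m_r(\tau)$.

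The reverse inequality $\tilde r(\tau)\le m_r(\tau)$ is where the real work lies, and it is the step I expect to be the main obstacle. Fix $u\in P_r(\tau)$. By \autoref{Lemma 5.3}, $I_u$ has a unique critical point, which is its maximum, and since $M(u)=I_u'(0)=0$ this point is $t=0$; hence $E(u)=\max_{t\in\mathbb{R}}I_u(t)$. Moreover every $u\in P_r(\tau)$ has $T(u)\ge k_2$ (as $M>0$ on $A_{k_2}$), so scaling $u$ back onto $\partial A_{k_2}$ shows $E(u)\ge\rho>\sup_{u\in A_{k_1}}E(u)$. The natural competitor is the fiber segment $t\mapsto(u,t)$, $t\in[T_-,T_+]$, with $T_-\ll 0\ll T_+$ chosen via \autoref{Lemma 5.1} so that $T(T_-\star u)\le k_1$ and $E(T_+\star u)<0$ with $T(T_+\star u)>k_2$; along it $\max\Phi=E(u)$. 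The obstacle is that this segment runs between $(u,T_-)$ and $(u,T_+)$ rather than the prescribed endpoints $(u_1,0),(u_2,0)$, so it must be glued to two connecting arcs: one joining $(u_1,0)$ to $(u,T_-)$ inside the low-$T$ region, where $E<\rho\le E(u)$, and one joining $(u,T_+)$ to $(u_2,0)$ inside the region $\{E<0\}$. Producing these arcs while keeping $\Phi$ below $E(u)$ is the crux; I would first use the fiber map to push both endpoints into a zone of arbitrarily small $T$ (where $E$ is uniformly small, using $E\le T/p$ and the monotonicity of $I_{u_1}$ on $(-\infty,0]$ that follows from $M(u_1)>0$), and then invoke path-connectedness of $S_r(\tau)$ together with uniform energy control along a normalized convex-combination homotopy. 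Once the glued path is a valid element of $\tilde\Gamma(\tau)$ with $\max_z\Phi\le E(u)$, taking the infimum over $u\in P_r(\tau)$ gives $\tilde r(\tau)\le m_r(\tau)$ and closes the chain $r(\tau)=\tilde r(\tau)=m_r(\tau)$.
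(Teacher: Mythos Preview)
Your first two steps—the bijection between $\Gamma(\tau)$ and $\tilde\Gamma(\tau)$ giving $r(\tau)=\tilde r(\tau)$, and the crossing argument $z\mapsto M(\eta(z))$ giving $\tilde r(\tau)\ge m_r(\tau)$—coincide with the paper's proof.

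The divergence is in the reverse inequality $r(\tau)\le m_r(\tau)$. The paper does not keep $u_1,u_2$ fixed. The pair $(u_1,u_2)$ was introduced only through the qualitative requirements in \eqref{u_1,u_2}, and for any $u\in P_r(\tau)$ \autoref{Lemma 5.1} provides $t_1<0<t_2$ such that $t_1\star u$ and $t_2\star u$ satisfy exactly those requirements. The paper then simply \emph{takes} $u_1=t_1\star u$, $u_2=t_2\star u$; with this choice the single fiber segment $\eta(z)=((1-z)t_1+zt_2)\star u$ already lies in $\Gamma(\tau)$, and by \autoref{Lemma 5.3} its maximum is $I_u(0)=E(u)$. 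No gluing is needed, and since the crossing argument gives $r(\tau)\ge m_r(\tau)$ for \emph{every} admissible endpoint pair, this suffices.

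Your route—keeping $u_1,u_2$ fixed and splicing the fiber segment through $u$ onto the prescribed endpoints via two low-energy arcs—would prove the stronger statement that $r(\tau)$ is independent of the endpoint choice, but the step you flag as ``the crux'' is genuinely incomplete. The normalized convex combination $s\mapsto \tau^{1/p}((1-s)a+sb)/\|(1-s)a+sb\|_p$ does not, in general, keep $T$ small or $E$ below a prescribed level: even if $T(a),T(b)$ are tiny, the denominator $\|(1-s)a+sb\|_p$ can become small (for nearly disjointly supported $a,b$), inflating $T$ along the homotopy. Making this rigorous requires either a different connecting path or an additional argument, whereas the paper's shortcut avoids the issue entirely.
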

	\begin{proof}
		Claim 1 : $r(\tau)=\tilde{r}(\tau)$.\\
		For any $(\tilde{\eta}_1,\tilde{\eta}_2)=\tilde{\eta}\in \tilde{\Gamma}(\tau)$, define $g(z):=\tilde{\eta}_2(z)\star \tilde{\eta}_1(z)$ for every $z\in [0,1]$. Clearly, $g\in \Gamma(\tau)$ and hence
		$$r(\tau)\leq \max_{z\in [0,1]}E(g(z))=\max_{z\in [0,1]}\Phi (\tilde{\eta}(z)),$$
		since, $\tilde{\eta}\in \Gamma(\tau)$ is arbitrary, we get $r(\tau)\leq \tilde{r}(\tau)$. On the other hand, for any $\eta\in \Gamma(\tau)$, we can take $\tilde{\eta}=(\tilde{\eta}_1,0)\in \tilde{\Gamma}(\tau)$ and deduce that:
		$$\tilde{r}(\tau)\leq \max_{z\in [0,1]}\Phi(\tilde{\eta}(z))=\max_{z\in [0,1]}E(\eta(z)),$$
		hence $\tilde{r}(\tau)\leq r(\tau)$. Thus we are done with claim 1.\\
		Claim 2 : $r(\tau)=m_r(\tau)$.\\
		For any $u\in P_r(\tau)$, by \autoref{Lemma 5.1} we can find $t_1<0<t_2$ such that $T(t_1\star u) <k_1<k_2<T(t_2\star u)$, $\Phi(u,t_2)<0<\Phi(u,t_1)$ and $M(t_2\star u)=I_u'(t_2)<0$. By the definition of $u_1$ and $u_2$ given in \eqref{u_1,u_2}, we can take $u_1=t_1 \star u$ and $u_2=t_2\star u$. Setting $\eta(z):=((1-z)t_1+zt_2)\star u$ for all $z\in [0,1]$, clearly $\eta\in \Gamma(\tau)$ and since $I_u'(0)=0$ by \autoref{Lemma 5.3} we get
		\begin{equation}\label{5.3}
			r(\tau)\leq \max_{z\in [0,1]}E(\eta(z))=\max_{t\in [t_1,t_2]}E(t\star u)=\max_{t\in [t_1,t_2]}I_u(t)=I_u(0)=E(u).
		\end{equation}
		as \eqref{5.3} is true for every $u\in P_r(\tau)$, we can deduce that $r(\tau)\leq m_r(\tau)$.\\
		Now, if we define $\tilde{h}(z):= M(\tilde{\eta}_2(z)\star \tilde{\eta}_1(z))$ on $[0,1]$, for some $(\tilde{\eta}_1,\tilde{\eta}_2)=\tilde{\eta}\in \tilde{\Gamma}(\tau)$, we get
		$$\tilde{h}(0)= M(0\star u_1)=M(u_1)>0,$$
		by \autoref{Lemma 5.2}, and 
		$$\tilde{h}(1)=M(0\star u_2)=M(u_2)<0.$$
		Thus, there exists $z_0\in (0,1)$ such that $\tilde{h}(z_0)=0$ and hence $\tilde{\eta}_2(z_0)\star \tilde{\eta}_1(z_0)\in P_r(\tau)$. Therefore,
		$$m_r(\tau)\leq E(\tilde{\eta}_2(z_0)\star \tilde{\eta}_1(z_0))=\Phi(\tilde{\eta}(z_0))\leq \max_{z\in [0,1]}\Phi (\tilde{\eta}(z_0)).$$
		Since $\tilde{\eta}\in \tilde{\Gamma}(\tau)$ is arbitrary, we are done. 
	\end{proof}
	\begin{lemma}\label{Lemma 5.5}
		For $u\in S_r(\tau)$, defining
		$$T_{u}=\{w\in W^{1,p}(\mathbb{R}^N): \int_{\mathbb{R}^N}|u|^{p-2}uw=0\},$$
		then we can find a sequence $\{u_n\}\subset S_r(\tau)$ such that
		\begin{enumerate}
			\myitem{1}\label{P1} $\displaystyle \lim_{n\rightarrow \infty}E(u_n) = m_r(\tau)$,
			\myitem{2}\label{P2} $\displaystyle \lim_{n\rightarrow \infty} M(u_n)=0$,
			\myitem{3} \label{P3} $\displaystyle \lim_{n\rightarrow \infty} E'_{S_r(\tau)}=0$, that is $E'(u_n)(w)\rightarrow 0$ uniformly for all $w\in T_{u_n}$ with $\left\| w \right\|\leq 1$ .
		\end{enumerate}
	\end{lemma}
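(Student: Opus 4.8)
The plan is to follow Jeanjean's device \cite{Jeanjean1997Existence} of building the Palais--Smale sequence not for $E$ on $S_r(\tau)$ directly, but for the augmented functional $\Phi$ on the product manifold $\mathcal{X}:=S_r(\tau)\times\mathbb{R}$, and then transporting it back via the scaling $\star$. On $\mathcal{X}$, with the product differentiable structure, $\Phi$ is $C^1$ (as already noted), and its minimax value over $\tilde{\Gamma}(\tau)$ equals $\tilde{r}(\tau)=r(\tau)=m_r(\tau)$ by \autoref{Lemma 5.4}. First I would record the mountain--pass geometry on $\mathcal{X}$: the endpoints satisfy $\Phi(u_1,0)=E(u_1)>0>E(u_2)=\Phi(u_2,0)$ by \eqref{u_1,u_2}, while every path $\eta\in\Gamma(\tau)$ joining $u_1\in A_{k_1}$ to $u_2$ with $T(u_2)>k_2$ must meet $\partial A_{k_2}$ by continuity of $z\mapsto T(\eta(z))$; hence $\max_z E(\eta(z))\geq \inf_{u\in\partial A_{k_2}}E(u)>\sup_{u\in A_{k_1}}E(u)\geq E(u_1)$ by \autoref{Lemma 5.2}. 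Consequently $r(\tau)>\sigma_\tau=\max\{E(u_1),E(u_2)\}$, so the minimax level lies strictly above the endpoints and the geometry is genuine.

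Next I would extract a suitable Palais--Smale sequence for $\Phi$ at level $\tilde{r}(\tau)$. Choosing a minimizing sequence of paths of the special form $\tilde{\eta}_n=(\eta_n,0)\in\tilde{\Gamma}(\tau)$ (admissible because $r(\tau)=\tilde{r}(\tau)$, cf. the proof of \autoref{Lemma 5.4}), the quantitative deformation/minimax principle on a $C^1$ Banach manifold, in the form used by Jeanjean, produces a sequence $\{(v_n,t_n)\}\subset\mathcal{X}$ with $\Phi(v_n,t_n)\to\tilde{r}(\tau)$, $\|D\Phi(v_n,t_n)\|_{(T_{(v_n,t_n)}\mathcal{X})^*}\to0$, and the localization $\operatorname{dist}\big((v_n,t_n),\,S_r(\tau)\times\{0\}\big)\to0$, whence $t_n\to0$. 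Splitting the differential into its $t$-- and $u$--components on $T_{(v_n,t_n)}\mathcal{X}=T_{v_n}\times\mathbb{R}$, and recalling $I_u'(t)=M(t\star u)$ from \autoref{Lemma 3.2}, these conditions read $\partial_t\Phi(v_n,t_n)=M(t_n\star v_n)\to0$ and, since $u\mapsto t\star u$ is linear with derivative $w\mapsto t\star w$, $D_u\Phi(v_n,t_n)[\tilde w]=E'(t_n\star v_n)[t_n\star\tilde w]\to0$ uniformly for $\tilde w\in T_{v_n}$ with $\|\tilde w\|\leq1$.

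Finally I would set $u_n:=t_n\star v_n\in S_r(\tau)$ and verify \ref{P1}--\ref{P3}. Property \ref{P1} is immediate: $E(u_n)=\Phi(v_n,t_n)\to\tilde{r}(\tau)=m_r(\tau)$. Property \ref{P2} follows from $M(u_n)=M(t_n\star v_n)=\partial_t\Phi(v_n,t_n)\to0$. For \ref{P3}, write $G(u):=\|u\|_p^p$, so that $S_r(\tau)=\{G=\tau\}$ and $T_u=\ker G'(u)$; given $w\in T_{u_n}$ with $\|w\|\leq1$, set $\tilde w:=(-t_n)\star w$. Since $G$ is $\star$--invariant, differentiating $G(t\star u)=G(u)$ gives $G'(u_n)[w]=G'(v_n)[\tilde w]$, so $w\mapsto(-t_n)\star w$ is a linear bijection $T_{u_n}\to T_{v_n}$; moreover, using the additivity $t\star(s\star w)=(t+s)\star w$, $E'(u_n)[w]=E'(t_n\star v_n)[t_n\star\tilde w]=D_u\Phi(v_n,t_n)[\tilde w]$. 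Because $\|s\star w\|^p=e^{ps}\|\nabla w\|_p^p+e^{sps}[w]_{s,p}^p+\|w\|_p^p$, taking $s=-t_n\to0$ yields $\|\tilde w\|\leq C\|w\|\leq C$ with $C$ uniform in $n$, so $|E'(u_n)[w]|\leq \|D_u\Phi(v_n,t_n)\|\,\|\tilde w\|\to0$ uniformly, which is exactly \ref{P3}.

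The main obstacle I anticipate is the second step: producing a Palais--Smale sequence for $\Phi$ that is simultaneously at the minimax level and localized near $S_r(\tau)\times\{0\}$ so that $t_n\to0$. The bare mountain--pass theorem on $\mathcal{X}$ yields $(v_n,t_n)$ with no control on $t_n$, and without $t_n\to0$ the scaling $(-t_n)\star$ may distort the norm of $\tilde w$ unboundedly and destroy the uniformity required in \ref{P3}. This is precisely why one works with the distinguished minimizing paths $(\eta_n,0)$ and invokes the refined minimax principle that keeps the Palais--Smale sequence close to the chosen paths; verifying the hypotheses of that principle, in particular that $S_r(\tau)$ is a $C^1$ submanifold with tangent space $T_u$ and that $\Phi$ has the regularity needed for the deformation, is the technical heart of the argument.
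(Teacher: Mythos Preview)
Your proposal is correct and follows essentially the same route as the paper: build a Palais--Smale sequence for the augmented functional $\Phi$ on $S_r(\tau)\times\mathbb{R}$ via Jeanjean's minimax principle, set $u_n=t_n\star v_n$, and read off \ref{P1}--\ref{P3} from the $t$-- and $u$--components of $D\Phi(v_n,t_n)$. The paper's proof is slightly terser---it simply cites Lemma~2.3 of \cite{Jeanjean1997Existence} for the existence of $(v_n,t_n)$ and writes ``for large $n$'' when bounding $\|(\tilde w,0)\|_{\mathbb W}\leq C\|w\|$---whereas you correctly isolate the reason this bound holds, namely the localization $t_n\to0$ coming from choosing the minimizing paths of the special form $(\eta_n,0)$; this is exactly the content of Jeanjean's refined deformation argument and is implicit in the paper's citation.
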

	\begin{proof}
		Following the proof of Lemma 2.3 of \cite{Jeanjean1997Existence}, we can find the sequence $\{(v_n,t_n)\}\subset S_r(\tau)\times \mathbb{R}$ such that
		\begin{equation}\label{5.4}
			\{\Phi (v_n,t_n)\}\rightarrow \tilde{r}(\tau),
		\end{equation}
		and 
		\begin{equation*}
			\left\| \Phi'_{S_r(\tau)\times \mathbb{R}} (v_n,t_n)\right\| \leq \frac{2}{\sqrt{n}}\rightarrow 0 \text{ as } n\rightarrow \infty,
		\end{equation*}
		that is,
		\begin{equation}\label{5.5}
			|\Phi'(v_n,t_n)(w_n)|\leq \frac{2}{\sqrt{n}}\left\| w \right\|_{\mathbb{W}} \text{ for all } w_n\in \tilde{T}_{(v_n,t_n)}
		\end{equation}
		where $\tilde{T}_{(v_n,t_n)}=\{(w_1,w_2)\in S_r(\tau)\times\mathbb{R}:\int_{\mathbb{R}^N}|v_n|^{p-2}v_nw_1=0\}$.
		Set $u_n=t_n \star v_n \in S_r(\tau)$. Thus we get \ref{P1} by \eqref{5.4} and \autoref{Lemma 5.4}. Now, for any $w=(w_1,w_2)\in \mathbb{W}$, we have:
		\begin{eqnarray*}
			\Phi'(v_n,t_n)(w_1,w_2) & = & e^{pt_n}\int_{\mathbb{R}^N}|\nabla v_n|^{p-2}\nabla v \nabla w_1+e^{pt_n}w_2\left\| \nabla v_n \right\|_p^p +e^{spt_n}\ll v_n,w_1\gg\\
			&& +sw_2e^{spt_n}[v_n]_{s,p}^p-e^{p^*t_n}\int_{\mathbb{R}^N}|v_n|^{p^*-2}v_nw_1-w_2e^{p^*t_n}\left\| v_n \right\|_{p^*}^{p^*}\\
			&& -e^{2q\gamma_qt_n}\mu\int_{\mathbb{R}^N}(I_{\alpha}*|v_n|^{q})|v_n|^{q-2}v_nw_1-\gamma_qw_2e^{2q\gamma_qt_n}A(v_n),
		\end{eqnarray*}
		taking $w=(0,1)$ we get $w\in \tilde{T}_{(v_n,t_n)}$ and hence by \eqref{5.5}
		\begin{eqnarray*}
			|M(u_n)| & = & |M(t_n\star v_n)| =|e^{pt_n}\left\| \nabla v_n\right\|_p^p+se^{spt_n}[v_n]_{s,p}^p-e^{p^*t_n}\left\| v_n\right\|_{p^*}^{p^*}-\gamma_qe^{2q\gamma_qt_n}A(v_n)|\\
			& = & |\Phi'(v_n,t_n)(w)|\leq \frac{2}{\sqrt{n}}\rightarrow 0 \text{ as } n\rightarrow \infty.
		\end{eqnarray*}
		Thus we are done with \ref{P2}. Further, taking $w\in W^{1,p}(\mathbb{R}^N)$ such that $\int_{\mathbb{R}^N}|u_n|^{p-2}u_nw=0$, we get:
		\begin{eqnarray*}
			E'(u_n)(w) & = & \int_{\mathbb{R}^N}|\nabla u_n|^{p-2}\nabla u_n\nabla w+\ll u_n,w\gg-\int_{\mathbb{R}^N}|u_n|^{p^*-2}u_nw\\
			& & -\mu\int_{\mathbb{R}^N}(I_{\alpha}*|u_n|^q)|u_n|^{q-2}u_nw\\
			& = & e^{pt_n}\int_{\mathbb{R}^N}|\nabla v_n|^{p-2}\nabla v_n \nabla\tilde{w}+e^{spt_n}\ll v_n, \tilde{w}\gg -e^{p^*t_n}\int_{\mathbb{R}^N}|v_n|^{p^*-2}v_n\tilde{w}\\
			&& -e^{2q\gamma_qt_n}\mu\int_{\mathbb{R}^N}(I_{\alpha}*|v_n|^q)|v_n|^{q-2}v_n\tilde{w}=\Phi'(v_n,t_n)(\tilde{w},0),
		\end{eqnarray*}
		where $\tilde{w}(x)=e^{-\frac{Nt_n}{p}}w(e^{-t_n}x)$. Clearly, $(\tilde{w},0)\in \tilde{T}_{(v_n,t_n)}$, therefore, by \eqref{5.5} we get:
		$$|E'(u_n)(w)|=|\Phi'(v_n,t_n)(\tilde{w},0)|
		\leq \frac{2}{\sqrt{n}}\left\|(\tilde{w},0) \right\|_{\mathbb{W}}\leq\frac{C}{\sqrt{n}}\left\| w\right\| \text{ for large } n.$$
		Since, $w\in T_{u_n}$ is arbitrary, 
		$$\sup\{E'(u_n)(w): w\in T_{u_n} \text{ with } \left\| w \right\|\leq 1\}\leq \frac{C}{\sqrt{n}}\rightarrow 0 \text{ as } n\rightarrow \infty.$$
	\end{proof}
	\begin{lemma}\label{Lemma 5.6}
		If $0 \neq m_r(\tau)<\frac{S^{\frac{N}{p}}}{N}$ and $\{u_n\}$ is the sequence deduced in \autoref{Lemma 5.5}, then for large enough $\mu>0$, either of the following holds true:
		\begin{enumerate}
			\myitem{a)}\label{Lemma 2.2.6a} Upto a subsequence, $\{u_n\}\rightharpoonup u$ weakly in $W^{1,p}(\mathbb{R}^N)$ but not strongly and $u$ is a solution of \eqref{1.1} for some parameter $\lambda \in \mathbb{R}$ with
			$$E(u)\leq m_r(\tau)-\frac{S^{\frac{N}{p}}}{N}.$$
			\myitem{b)}\label{Lemma 2.2.6b} Upto a subsequence, $\{u_n\}\rightarrow u$ strongly in $W^{1,p}(\mathbb{R}^N)$ and $u$ solves \eqref{1.1}-\eqref{1.2} for some $\lambda<0$, with $E(u)=m_r(\tau)$.
		\end{enumerate}
		In particular, for $q=\bar{q}_s$, we will take $\tau\in (0,\bar{\tau_s})$ and for $\bar{q_s}<q\leq \bar{q}:=\frac{p}{2}(\frac{N+\alpha+p}{N})$, we take $\tau< \bar{\tau_q}=(\frac{2qp^*}{NC_N(p^*-2q\gamma_q)})^{\frac{p}{2q(1-\gamma_q)}}$.
	\end{lemma}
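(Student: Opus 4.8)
The plan is to treat the sequence $\{u_n\}$ from \autoref{Lemma 5.5} as a constrained Palais--Smale sequence at level $m_r(\tau)$ and to run a concentration--compactness argument in which the only obstruction to compactness is the critical term $\|u\|_{p^*}^{p^*}$. First I would prove $\{u_n\}$ is bounded in $W^{1,p}(\mathbb{R}^N)$: since $\|u_n\|_p^p=\tau$ is fixed it is enough to bound $T(u_n)$, and the combination $E(u_n)-\tfrac1{p^*}M(u_n)$ eliminates the critical term and leaves $\tfrac1N\|\nabla u_n\|_p^p+(\tfrac1p-\tfrac{s}{p^*})[u_n]_{s,p}^p$ plus a multiple of $A(u_n)$; the $A$-term is either favourably signed or is absorbed sublinearly through the Gagliardo--Nirenberg bound of \autoref{prop 3.1}, giving $\sup_n T(u_n)<\infty$. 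Working in $W_r^{1,p}(\mathbb{R}^N)$ I then pass, up to a subsequence, to $u_n\rightharpoonup u$ in $W^{1,p}$, $u_n\to u$ in $L^r(\mathbb{R}^N)$ for all $r\in(p,p^*)$ by the compact radial embedding, and $u_n\to u$ a.e. Since $q<p_\alpha^*$ forces $\tfrac{2Nq}{N+\alpha}\in(p,p^*)$, \autoref{prop1.1} and this convergence give $A(u_n)\to A(u)$, while the Brezis--Lieb lemma for the Riesz potential gives $A(v_n)\to 0$ for $v_n:=u_n-u$.

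Next I would recover the Lagrange parameter. Property \ref{P3} furnishes $\lambda_n$ with $E'(u_n)-\lambda_n G'(u_n)\to 0$ in the dual, where $G(u)=\tfrac1p\|u\|_p^p$; testing with $u_n$ gives $\lambda_n\tau=T(u_n)-\|u_n\|_{p^*}^{p^*}-A(u_n)+o(1)$, so $\{\lambda_n\}$ is bounded and $\lambda_n\to\lambda$ along a subsequence. Passing to the limit in the weak formulation --- using the a.e. convergence of the gradients and the $(S_+)$ property of $-\Delta_p$ and $(-\Delta_p)^s$ for the quasilinear terms, and a.e. convergence together with $L^{p^*}$-boundedness for the critical term --- shows that $u$ is a weak solution of \eqref{1.1} with this $\lambda$; in particular $M(u)=0$ by \autoref{Lemma 3.1}.

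Then I would carry out the energy splitting. Writing $a=\lim\|\nabla v_n\|_p^p$, $e=\lim[v_n]_{s,p}^p$ and $b=\lim\|v_n\|_{p^*}^{p^*}$, the Brezis--Lieb decompositions together with $M(u_n)\to 0=M(u)$ and $A(v_n)\to 0$ give $a+se=b$, while \eqref{S} gives $a\ge S\,b^{p/p^*}$; since $a=b-se\le b$ this yields $S\,b^{p/p^*}\le b$, so $b=0$ or $b\ge S^{N/p}$. The Brezis--Lieb decomposition of $E$ then produces $m_r(\tau)=E(u)+\tfrac bN+\tfrac{1-s}{p}e$. If $b\ge S^{N/p}$ this forces $E(u)\le m_r(\tau)-\tfrac{S^{N/p}}{N}$ and, as $\|v_n\|_{p^*}$ does not vanish, the convergence is merely weak: this is alternative \ref{Lemma 2.2.6a}. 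If $b=0$, then $a=b-se\ge 0$ with $e\ge 0$ forces $a=e=0$, hence $T(v_n)\to 0$.

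The delicate point is to close alternative \ref{Lemma 2.2.6b}. The hypotheses $m_r(\tau)\neq 0$ and $m_r(\tau)<S^{N/p}/N$ first rule out $u\equiv 0$ (otherwise $m_r(\tau)=\tfrac bN$ with $b=0$ or $b\ge S^{N/p}$, impossible in either case), so $\|u\|_p^p>0$ and $\lambda$ is well defined. To upgrade $T(v_n)\to 0$ to strong convergence one must show $d:=\lim\|v_n\|_p^p=\tau-\|u\|_p^p$ vanishes; subtracting the identities obtained by testing the limiting equation with $u_n$ and with $u$ gives $(1-s)e=\lambda d$, so $e=0$ reduces this to $\lambda d=0$, and it suffices that $\lambda\neq 0$. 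The sign $\lambda<0$ comes from subtracting the Pohozaev identity $M(u)=0$ from the Nehari identity $E'(u)(u)=\lambda\|u\|_p^p$, giving $\lambda\|u\|_p^p=(1-s)[u]_{s,p}^p-(1-\gamma_q)A(u)$ with $\gamma_q<1$: for $\mu$ large the term $A(u)=\mu\mathcal{A}_q(u)$ dominates and drives $\lambda<0$. Establishing this domination uniformly --- i.e. controlling $[u]_{s,p}^p$ against $A(u)$ for the limiting minimizer as $\mu\to\infty$ --- is the main obstacle, and is exactly where largeness of $\mu$ is indispensable. Once $\lambda<0$ is secured, $d=0$, so $u_n\to u$ strongly in $W^{1,p}(\mathbb{R}^N)$ with $u\in S_r(\tau)$, $E(u)=m_r(\tau)$ and $\lambda<0$, which is \ref{Lemma 2.2.6b}.
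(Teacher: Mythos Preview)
Your proposal is correct and follows essentially the same route as the paper: boundedness via the combination $E(u_n)-\tfrac{1}{p^*}M(u_n)$ (with the Gagliardo--Nirenberg inequalities of \autoref{prop 3.1} absorbing the Choquard term), extraction of the Lagrange multiplier, $u\neq 0$ from the level hypothesis, Brezis--Lieb splitting producing the dichotomy $b=0$ versus $b\ge S^{N/p}$, and finally the identity $\lambda\|u\|_p^p=(1-s)[u]_{s,p}^p-(1-\gamma_q)A(u)$ to force $\lambda<0$ for large $\mu$. The only cosmetic differences are that the paper works with the single quantity $l=\lim T_s(v_n)$ rather than your separate $a,e,b$, and derives $\lambda\|v_n\|_p^p=o(1)$ directly instead of via your intermediate relation $(1-s)e=\lambda d$; your explicit flagging of the uniformity issue in the ``large $\mu$'' step is well taken, as the paper simply asserts the inequality without addressing the dependence of $u$ on $\mu$.
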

	\begin{proof}
		Let $\{u_n\}$ be the sequence deduced in \autoref{Lemma 5.5}.\\
		Claim 1: $\{u_n\}$ is bounded.\\
		For $q=\bar{q_s}$, since $E(u_n)\rightarrow m_r(\tau)$ and $M(u_n)\rightarrow 0$ then for large $n$, using \eqref{fractionalGN} we have
		\begin{eqnarray*}
			m_r(\tau)+1 & \geq & E(u_n) = E(u_n)-\frac{1}{p^*}M(u_n)+o_n(1)\\
			& = & \left(\frac{1}{p}-\frac{1}{p^*}\right)\left\| \nabla u_n\right\|_p^p+\left(\frac{1}{p}-\frac{s}{p^*}\right)[u_n]_{s,p}^p-\left(\frac{1}{2\bar{q}_s}-\frac{\gamma_{\bar{q}_s}}{p^*}\right)A(u_n)+o_n(1)\\
			& = & \frac{\left\| \nabla u_n \right\|_p^p}{N}+\left(\frac{1}{p}-\frac{s}{p^*}\right)[u_n]_{s,p}^p-\frac{(p^*-sp)}{2p^*\bar{q}_s}A(u_n)+o_n(1)\\
			& \geq & \frac{\left\| \nabla u_n \right\|_p^p}{N}+\left(\frac{1}{p}-\frac{s}{p^*}-\left(\frac{(p^*-sp)}{2p^*\bar{q}_s}\right)C_N\tau^{\frac{\alpha+sp}{N}}\right)[u_n]_{s,p}^p\geq KT(u_n),
		\end{eqnarray*}
		where $K=\min\{\frac{1}{N},\frac{1}{p}-\frac{s}{p^*}-\left(\frac{(p^*-sp)}{2p^*\bar{q}_s}\right)C_N\tau^{\frac{\alpha+sp}{N}}\}>0$ for all $\tau < \bar{\tau_s}$. \\
			Similarly, for $\bar{q_s}<q \leq \bar{q}$, using \eqref{G_N}, for large $n$ we get:
			\begin{eqnarray*}
				m_r(\tau)+1 & \ge & E(u_n)=E(u_n)-\frac{1}{p^*}M(u_n)+o_n(1)\\
				& \geq & \frac{T(u_n)}{N}+\frac{(2q\gamma_q-p^*)}{2qp^*}C_N\tau^{\frac{2q(1-\gamma_q)}{p}}T(u_n),
			\end{eqnarray*}
			since, $2q\gamma_q<p<p^*$ and $T(u_n)\geq 1$ for large $n$ (if not, then directly $\{u_n\}$ becomes bounded). Hence $\{u_n\}$ is bounded, for $\tau<\bar{\tau_q}$, and for $\bar{q}<q<\frac{p}{2}(\frac{N+\alpha}{N-p})$, we have:
			\begin{eqnarray*}
				m_r(\tau)+1 & \geq & E(u_n)=E(u_n)+\frac{M(u_n)}{p}+o_n(1)\\
				& \geq & \left(\frac{1}{p}-\frac{1}{p^*}\right)\left\| u\right\|_{p^*}^{p^*}+\left(\frac{\gamma_q}{p}-\frac{1}{2q}\right)A(u_n)+o_n(1).
			\end{eqnarray*} 
			This tells us that $\{\left\| u_n\right\|_{p^*}^{p^*}\}$ and $\{A(u_n)\}$ are bounded, thus
			$$sT(u_n)\leq \left\| \nabla u_n \right\|_p^p+s[u_n]_{s,p}^p=\left\| u_n\right\|_{p^*}^{p^*}+\gamma_q A(u_n)+o_n(1)<M,$$
			for some $M>0$. Therefore, $\{u_n\}$ is bounded in $S_r(\tau)\subset W_r^{1,p}(\mathbb{R}^N)$. Since $W_r^{1,p}(\mathbb{R}^N)$ is compactly imbedded in $L^t(\mathbb{R}^N)$ for all $t\in (p,p^*)$, we can find $u\in W_r^{1,p}(\mathbb{R}^N)$ such that upto a subsequence
			\begin{eqnarray*}
				\{u_n\} & \rightharpoonup & u, \text{ weakly in } W_r^{1,p}(\mathbb{R}^N),\\
				\{u_n\} & \rightarrow & u, \text{ strongly in } L^t(\mathbb{R}^N) \text{ for all } t\in (p,p^*),\\
				\{u_n\} & \rightarrow & u \text{ almost everywhere in } \mathbb{R}^N.
			\end{eqnarray*}
			Following the proof of Lemma 2.5 of \cite{Jeanjean1997Existence}, one can easily see that, for every $w\in W^{1,p}(\mathbb{R}^N)$
			\begin{equation}\label{E(u_n)}
				E'(u_n)(w)-\lambda_n\int_{\mathbb{R}^N}|u_n|^{p-2}u_nw= o_n(1)					
			\end{equation}
			where 
			$$\lambda_n=\frac{E'(u_n)(u_n)}{\left\| u_n\right\|_p^p}=\frac{1}{\tau}\left(\left\| \nabla u_n \right\|_p^p+[u_n]_{s,p}^p-\left\| u_n \right\|_{p^*}^{p^*}-A(u_n)\right).$$
			Since $\{u_n\}$ is bounded in $W^{1,p}(\mathbb{R}^N)$, clearly, $\{\lambda_n\}$ must be bounded and convergent upto a subsequence to some $\lambda_\tau\in \mathbb{R}$.  \\
			Claim 2: $u\neq 0$.\\
			Let if possible, $u(x)=0$ for all $x\in \mathbb{R}^N$. Then by \eqref{HLS}, we have
			\begin{equation}\label{5.7}
				A(u_n)\leq \mu C_N\left(\int_{\mathbb{R}^N}|u_n|^{\frac{2Nq}{N+\alpha}}\right)^{\frac{N+\alpha}{N}}\rightarrow \mu C_N\left(\int_{\mathbb{R}^N}|u|^{\frac{2Nq}{N+\alpha}}\right)^{\frac{N+\alpha}{N}}=0,
			\end{equation}
			and since, $M(u_n)=o_n(1)$, we get $\displaystyle \lim_{n\rightarrow \infty}(\left\| \nabla u_n \right\|_p^p+s[u_n]_{s,p}^p)=\lim_{n\rightarrow \infty} \left\| u_n \right\|_{p^*}^{p^*}$. Let $l\geq 0$ be such that $$l=\displaystyle \lim_{n\rightarrow \infty}(\left\| \nabla u_n \right\|_p^p+s[u_n]_{s,p}^p)=\lim_{n\rightarrow \infty} \left\| u_n \right\|_{p^*}^{p^*}.$$
			Then by \eqref{S}, we get: $$S\leq \frac{l}{l^{\frac{p}{p^*}}}\Rightarrow l(Sl^{\frac{p}{p^*}-1}-1)\leq 0,$$
			Therefore, either $l=0$ or $l\geq S^{\frac{N}{p}}$. Now, if $l\geq S^{\frac{N}{p}}$, then by \eqref{5.7}
			\begin{eqnarray*}
				m_r(\tau) & = & E(u_n) +o_n(1)=E(u_n)-\frac{1}{p^*}M(u_n)+o_n(1)\\
				&\geq & \frac{1}{N}\left\| \nabla u_n \right\|_p^p +\frac{s}{N}[u_n]_{s,p}^p+o_n(1)=\frac{l}{N}+o_n(1)\geq \frac{S^{\frac{N}{p}}}{N},
			\end{eqnarray*}
			but we are given that $m_r(\tau)<\frac{S^{\frac{N}{p}}}{N}$, and if $l=0$, then we will get $m_r(\tau)=\displaystyle \lim_{n\rightarrow \infty}E(u_n)=0$. Thus, by contradiction, we must have $u\neq 0$. Also, since $\{u_n\}\rightharpoonup u$, we can see that, 
			$$\int_{\mathbb{R}^N}|\nabla u|^{p-2}\nabla u\nabla \zeta+\ll u,\zeta \gg=\lambda \int_{\mathbb{R}^N}|u|^{p-2}u\zeta+\int_{\mathbb{R}^N}|u|^{p^*-2}u\zeta +\mu\int_{\mathbb{R}^N}(I_{\alpha}*|u|^{q})|u|^{q-2}u\zeta,$$
			for all $\zeta\in C_{c}^{\infty}(\mathbb{R}^N)$. Thus, $u$ is a weak solution of 
			\begin{equation}\label{u}
				-\Delta_pu +(-\Delta_p)^su= \lambda |u|^{p-2}u +\mu(I_{\alpha}*|u|^q)|u|^{q-2}u +|u|^{p^*-2}u \text{ in } \mathbb{R}^N,
			\end{equation}
			and hence by \autoref{Lemma 3.1} $M(u)=0$. Now, let $v_n=(u_n-u) \rightharpoonup 0$, weakly in $W^{1,p}(\mathbb{R}^N)$, then as done in \autoref{prop 4.1}, we have:
			\begin{equation}\label{5.9}
				\left\{ \begin{array}{cc}
					\left\| u_n \right\|_p^p & =  \left\| v_n \right\|_p^p+\left\| u \right\|_p^p+o_n(1)\\
					\left\| u_n \right\|_{p^*}^{p^*} & =  \left\| v_n \right\|_{p^*}^{p^*}+ \left\| u \right\|_{p^*}^{p^*}+o_n(1)\\
					T_s(u_n) & =  T_s(v_n) +T_s(u) +o_n(1) \\
					A(u_n) & = A(v_n)+A(u)+o_n(1)
				\end{array}
				\right.
			\end{equation}
			where  $T_s(u)=\left\| \nabla u \right\|_p^p+s[u]_{s,p}^p$. Since, $\{v_n\}\rightarrow 0$ in $L^t(\mathbb{R}^N)$ for all $t\in (p,p^*)$, by \eqref{HLS} we get $A(v_n)=o_n(1)$, thus by \eqref{5.9} we have
			\begin{eqnarray*}
				o_n(1) & = & M(u_n) =T_s(u_n)-\left\| u_n \right\|_{p^*}^{p^*}-\gamma_qA(u_n)\\
				& = & T_s(v_n) -\left\| v_n \right\|_{p^*}^{p^*} -A(v_n) +M(u)= T_s(v_n)-\left\| v_n \right\|_{p^*}^{p^*}+o_n(1),
			\end{eqnarray*}
			that is, $T_s(v_n)=\left\| v_n \right\|_{p^*}^{p^*}+o_n(1)$. Let $l=\displaystyle \lim_{n\rightarrow \infty}T_s(v_n)=\lim_{n\rightarrow \infty} \left\| v_n \right\|_{p^*}^{p^*}\geq 0$, then again by \eqref{S} we have, either $l=0$ or $l \geq S^{\frac{N}{p}}$. Now, if $l\geq S^{\frac{N}{p}}$, then by \eqref{5.9} and the fact that $A(v_n)=o_n(1)$, we will get
			\begin{eqnarray*}
				m_r(\tau) & = & \lim_{n\rightarrow \infty} E(u_n) = \lim_{n\rightarrow \infty} \left(\frac{T(u_n)}{p}-\frac{\left\| u_n \right\|_{p^*}^{p^*}}{p^*}-\frac{A(u_n)}{2q}\right)\\
				& = & \lim_{n\rightarrow \infty} \left(\frac{T(v_n)}{p}-\frac{\left\| v_n \right\|_{p^*}^{p^*}}{p^*}\right)+ E(u)\geq \lim_{n\rightarrow \infty} \left(\frac{T_s(v_n)}{p}-\frac{\left\| v_n \right\|_{p^*}^{p^*}}{p^*}\right)+ E(u)\\
				& = & \frac{l}{N}+E(u) \geq \frac{S^{\frac{N}{p}}}{N}+E(u).
			\end{eqnarray*}
			Hence \ref{Lemma 2.2.6a} holds true, and with $l=0$ we get strong convergence of $\{\left\| \nabla v_n\right\|_p^p\}$, $\{[v_n]_{s,p}^p\}$  and $\{\left\|u_n\right\|_{p^*}^{p^*}\}$. 
			Then $\displaystyle \lim_{n\rightarrow \infty}E'(u_n)(u_n)=E(u)(u)$, and hence
			\begin{eqnarray*}
				\lambda & = & \lim_{n\rightarrow \infty} \lambda_n =\lim_{n\rightarrow \infty}\frac{E'(u_n)(u_n)}{\tau}=\frac{E'(u)(u)}{\tau}\\
				& = & \frac{1}{\tau}\left(E'(u)(u)-M(u)\right)=\frac{(1-s)}{\tau}[u]_{s,p}^p+\frac{(\gamma_q-1)}{\tau}\mu \mathcal{A}_q(u)<0,
			\end{eqnarray*}
			for sufficiently large $\mu>0$. Moreover, by \eqref{5.9}, \eqref{E(u_n)} and \eqref{u} we get
			\begin{eqnarray*}
				\lambda \left\| v_n \right\|_p^p & = & \lambda_n \left\| u_n \right\|_p^p-\lambda \left\| u \right\|_p^p+o_n(1) = E'(u_n)(u_n)-E'(u)(u)+o_n(1)\\
				& = &-A(u_n)+A(u) +o_n(1) = -A(v_n)+o_n(1)= o_n(1).
			\end{eqnarray*} 
			Therefore $\{u_n\}\rightarrow u$ strongly in $W^{1,p}(\mathbb{R}^N)$ and hence \ref{Lemma 2.2.6b} holds.
		\end{proof}
		\begin{lemma}\label{Lemma 5.7}
			For all $\bar{q_s}\leq q <\frac{p}{2}\left(\frac{N+\alpha}{N-p}\right)$,
			\begin{equation}\label{m_r_tau}
				m_r(\tau)<\frac{S^{\frac{N}{p}}}{N}.
			\end{equation}
		\end{lemma}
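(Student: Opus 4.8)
The plan is to reduce the claim to the construction of a single test function with low fibered energy. By \autoref{Lemma 5.4} we have $m_r(\tau)=r(\tau)$, and by \autoref{Lemma 5.3} every $u\in S_r(\tau)$ satisfies $\max_{t\in\mathbb{R}}E(t\star u)=E(t_u\star u)$ with $t_u\star u\in P_r(\tau)$; consequently $m_r(\tau)\le \max_{t\in\mathbb{R}}E(t\star u)$ for each $u\in S_r(\tau)$, and in fact $m_r(\tau)=\inf_{u\in S_r(\tau)}\max_{t\in\mathbb{R}}E(t\star u)$. Thus it suffices to exhibit one radial $u\in S_r(\tau)$ with $\max_{t}E(t\star u)<\frac{S^{N/p}}{N}$.

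For the open range $\bar{q}_s<q<p_\alpha^*$ I would exploit that the Choquard term is strictly Hardy--Littlewood--Sobolev subcritical and carries the amplifying factor $\mu$. Fix \emph{any} radial $u\in S_r(\tau)$ (e.g.\ a smooth compactly supported bump) and study $E(t\star u)=\tfrac{e^{pt}}{p}\|\nabla u\|_p^p+\tfrac{e^{spt}}{p}[u]_{s,p}^p-\tfrac{e^{p^*t}}{p^*}\|u\|_{p^*}^{p^*}-\tfrac{e^{2q\gamma_qt}}{2q}A(u)$. Among the fiber exponents $sp<2q\gamma_q<p^*$ and $sp<p$, the smallest is $sp$, so $E(t\star u)\to 0^+$ as $t\to-\infty$, while the unique maximizer $t^*$ (\autoref{Lemma 5.3}) is determined by $M(t^*\star u)=0$. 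Taking $\mu$ large forces $t^*\to-\infty$: balancing the fractional against the Choquard term gives $e^{(2q\gamma_q-sp)t^*}\sim \tfrac{s[u]_{s,p}^p}{\gamma_q\,\mu\,\mathcal{A}_q(u)}\to 0$, whence $\max_t E(t\star u)\sim e^{spt^*}[u]_{s,p}^p\big(\tfrac1p-\tfrac{s}{2q\gamma_q}\big)\to 0^+$ (the coefficient is positive since $2q\gamma_q>sp$). Hence for $\mu$ large enough $\max_t E(t\star u)<\tfrac{S^{N/p}}{N}$, giving the bound. This argument needs no Sobolev extremal because the subcritical, $\mu$-amplified nonlinearity pulls the whole mountain-pass level down to near $0$.

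The endpoint $q=\bar{q}_s$ is where this breaks down and is, I expect, the main obstacle. Here $2q\gamma_q=sp$ coincides with the fractional exponent, so the two terms merge into $e^{spt}\big(\tfrac1p[u]_{s,p}^p-\tfrac{1}{2\bar{q}_s}A(u)\big)$; under the standing hypothesis $\tau<\bar{\tau}_s$ the Gagliardo--Nirenberg estimate \eqref{fractionalGN} forces this coefficient to be nonnegative for \emph{every} $u$, so amplifying $\mu$ no longer lowers the level. I would then resort to the Aubin--Talenti extremals $U_{\epsilon,0}$ of \eqref{U_epsilon}, truncated by a radial cutoff and rescaled by a constant to meet $\|u_\epsilon\|_p^p=\tau$, so that $u_\epsilon\in S_r(\tau)$. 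Using the scale invariance of the Sobolev quotient, $\max_t E(t\star u_\epsilon)=\tfrac{S^{N/p}}{N}+(\text{truncation error})+(\text{combined }sp\text{-order correction})$, where the $p$-Laplacian bubble asymptotics $\|\nabla u_\epsilon\|_p^p=S^{N/p}+O(\epsilon^{(N-p)/(p-1)})$, $\|u_\epsilon\|_{p^*}^{p^*}=S^{N/p}+O(\epsilon^{N/(p-1)})$, and $[u_\epsilon]_{s,p}^p\sim\epsilon^{p(1-s)}$ enter.

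The delicate point is that the leading Sobolev term already sits exactly at $\tfrac{S^{N/p}}{N}$, the truncation error is positive, and the $sp$-order correction involves the fine comparison of $[u_\epsilon]_{s,p}^p$ against $\mathcal{A}_{\bar{q}_s}(u_\epsilon)$; moreover the $p$-Laplacian truncation rates split according to whether $N>p^2$, and one must verify that $t^*$ does not escape to $-\infty$ before the Choquard contribution can act. Establishing that the net correction is strictly negative — so that $\max_t E(t\star u_\epsilon)<\tfrac{S^{N/p}}{N}$ — therefore requires the precise blow-up rates together with a coordinated choice of $\epsilon$, $\mu$ and the smallness of $\tau$, and this strict sign is the crux of the proof.
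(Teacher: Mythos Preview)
Your split into the open range and the endpoint is reasonable, but it diverges from the paper's route and leaves the essential step unfinished.

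\textbf{Open range $\bar{q}_s<q<p_\alpha^*$.} Your $\mu$-large argument is coherent and does drive $\max_t E(t\star u)\to 0^+$, but Lemma~5.7 as stated carries no hypothesis on $\mu$; the paper proves it for every fixed $\mu>0$ by the same Talenti construction it uses at the endpoint. So your approach proves only a weaker statement (sufficient for Theorem~1.3, but not the lemma itself). The paper does \emph{not} split by $q$: the truncated extremal $u_\epsilon=\psi U_\epsilon$, rescaled to $v_\epsilon\in S_r(\tau)$, handles the whole range uniformly.

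\textbf{Endpoint $q=\bar{q}_s$.} Here you correctly identify the strategy but stop at ``this strict sign is the crux of the proof''. This is precisely the gap. Two concrete pieces are missing:
\begin{enumerate}
\item The maximiser $t_\epsilon$ stays bounded. This is not delicate: if $t_\epsilon\to\pm\infty$ then $E(t_\epsilon\star v_\epsilon)\to 0^+$ or $-\infty$ by Lemma~5.1, contradicting $m_r(\tau)=r(\tau)>0$ (Lemma~5.4). With $a<t_\epsilon<b$ one then gets from $M(t_\epsilon\star v_\epsilon)=0$ the bound $e^{pt_\epsilon}\le \frac{\|u_\epsilon\|_p^p}{\tau}\bigl(B_\epsilon(u_\epsilon)/\|u_\epsilon\|_{p^*}^{p^*}\bigr)^{p/(p^*-p)}$.
\item The quantitative lower bound $A(u_\epsilon)\ge K_6\,\epsilon^{2q(1-\gamma_q)}$, which you never state. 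Combined with $[u_\epsilon]_{s,p}^p=O(\epsilon^{m_{N,p,s}})$ and $\|u_\epsilon\|_p^p\sim \epsilon^p$ (for $N>p^2$), this yields
\[
\frac{[v_\epsilon]_{s,p}^p}{A(v_\epsilon)}\;\lesssim\;\epsilon^{\,m_{N,p,s}+2q\gamma_q}\longrightarrow 0,
\]
so for small $\epsilon$ one has $\dfrac{e^{spt_\epsilon}}{p}[v_\epsilon]_{s,p}^p-\dfrac{e^{2q\gamma_qt_\epsilon}}{2q}A(v_\epsilon)<0$, and hence $E(t_\epsilon\star v_\epsilon)\le \tfrac{1}{N}S^{N/p}+O(\epsilon^{(N-p)/(p-1)})+(\text{negative})<\tfrac{1}{N}S^{N/p}$.
\end{enumerate}
Your expectation that one must ``coordinate $\epsilon$, $\mu$ and the smallness of $\tau$'' is a misreading: in the paper only $\epsilon\to 0$ is taken, with $\mu,\tau$ fixed. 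The negativity of the correction comes purely from the rate comparison above, not from tuning parameters.
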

		\begin{proof}
			Let $\psi\in C_c^{\infty}(\mathbb{R}^N, [0,1])$ be a radial cut-off function such that $\psi=1$ in $B_1(0)$ and $\psi=0$ in $\mathbb{R}^N\setminus B_2(0)$. For $\epsilon>0$, define $u_{\epsilon}(x):=\psi(x)U_{\epsilon}(x)$ where $U_{\epsilon}$ is as defined in \eqref{U_epsilon} with $x_0=0$, then by \cite{Avenia2015} we have:
			\begin{eqnarray}\label{gradient_u_epsilon}
				\left\| \nabla u_{\epsilon}\right\|_p^p = K_1+ O(\epsilon^{\frac{N-p}{p-1}}),
			\end{eqnarray}
			\begin{equation}\label{[u_epsilon]}
				[u_\epsilon]_{s,p}^{p} = O(\epsilon^{m_{N,p,s}}) \text{ where } m_{N,p,s}=\min\left\{\frac{N-p}{p-1},p(1-s)\right\},
			\end{equation}
			\begin{equation}\label{u_p*}
				\left\| u_\epsilon \right\|_{p^*}^{p^*} = K_2 +O(\epsilon^{-\frac{N}{p-1}}),
			\end{equation}
			with $S=K_1/K_2^{\frac{p}{p^*}}$. Since 
			$$\left\| u_\epsilon \right\|_p^p= \int_{B_1(0)}|U_{\epsilon}(x)|^pdx+ \int_{\mathbb{R}^N\setminus B_1(0)}|\psi (x)|^p|U_{\epsilon}(x)|^pdx= \int_{B_1(0)}|U_{\epsilon}(x)|dx+O(\epsilon^{\frac{N-p}{p-1}}),$$
			following the work of Brezis and Nirenberg in \cite{Brezis1983Positive}, we have:
			\begin{equation}\label{u_epsilon}
				\left\| u_{\epsilon} \right\|_p^p = \left\{
				\begin{array}{cc}
					K_3\epsilon^p+O(\epsilon^{\frac{N-p}{p-1}}), & \text{ for } N>p^2,\\
					K_4\epsilon^p|\ln(\epsilon)|+O(\epsilon^{\frac{N-p}{p-1}}) & \text{ for } N=p^2,\\
					K_5\epsilon^p+O(\epsilon^{\frac{N-p}{p-1}}) & \text{ for } p<N<p^2.
				\end{array}
				\right.
			\end{equation}
			Also, a direct computation give us:
			\begin{equation}\label{A(u_epsilon)}
				A(u_{\epsilon}) \geq K_6\epsilon^{2\bar{q_s}(1-\gamma_{\bar{q_s}})}.
			\end{equation}
			Set $v_{\epsilon}(x):=\frac{\tau^{\frac{1}{p}}u_{\epsilon}(x)}{\left\| u_{\epsilon}\right\|_p}$, clearly $v_\epsilon\in S_r(\tau)$ and hence by \autoref{Lemma 5.3}, there exists $t_{\epsilon}\in \mathbb{R}^N$ such that $t_{\epsilon}\star v_{\epsilon}\in P_r(\tau)$ and $I_{v_{\epsilon}}(t_{\epsilon})=\displaystyle\max_{t\in \mathbb{R}}I_{v_{\epsilon}}(t)$. 
			Considering the sequence $\{t_{\epsilon}\}$, one can observe that, if $\{t_{\epsilon}\}\rightarrow -\infty$ as $\epsilon\rightarrow 0$, then by \autoref{Lemma 5.1}, we must have 
			$$m_r(\tau)\leq E(t_{\epsilon}\star v_{\epsilon})\rightarrow 0^+ \text{ as }\epsilon\rightarrow 0,$$
			thus, $m_r(\tau)\leq 0 <\frac{S^{N/p}}{N}$. Similarly, if $\{t_{\epsilon}\}\rightarrow +\infty$ as $\epsilon \rightarrow 0$, using \autoref{Lemma 5.1}, we get, $m_r(\tau)<0<\frac{S^{N/p}}{N}$. In fact, by \autoref{Lemma 5.4}, we have $m_r(\tau)=r(\tau)>0$, thus the above cases are not possible, that is, $\{t_{\epsilon}\}$ must be bounded.\\
			Now, let $a<b$, be such that such that $a<t_{\epsilon}<b$ for all $\epsilon>0$. Since $M(t_{\epsilon}\star v_{\epsilon})=0$, we have 
			\begin{equation*}
				e^{(p^*-p)t_{\epsilon}}\left\| v_{\epsilon}\right\|_{p^*}^{p^*} = B_{\epsilon}(v_{\epsilon})-\gamma_qe^{(2q\gamma_q-p)t_{\epsilon}}A(v_{\epsilon}),
			\end{equation*}
			where $B_{\epsilon}(u)=\left\| \nabla u \right\|_p^p+se^{(sp-p)t_{\epsilon}}[u]_{s,p}^p$. Thus, we have:
			\begin{equation}\label{e_t_epsilon}
				e^{pt_{\epsilon}}\leq \left(\frac{B_{\epsilon}(v_{\epsilon})}{\left\| v_{\epsilon}\right\|_{p^*}^{p^*}}\right)^{\frac{p}{p^*-p}}=
				\frac{\left\| u_{\epsilon}\right\|_p^p}{\tau}\left(\frac{B_{\epsilon}(u_{\epsilon})}{\left\| u_{\epsilon}\right\|_{p^*}^{p^*}}\right)^{\frac{p}{p^*-p}}.
			\end{equation} 
			Now, define $$g(t):=\frac{e^{pt}\left\| \nabla v_{\epsilon}\right\|_p^p}{p}-\frac{e^{p^*t}\left\| v_{\epsilon}\right\|_{p^*}^{p^*}}{p^*},$$
			clearly $g$ has global maxima at $t=t_0$ such that
			$$e^{t_0}=\left(\frac{\left\| \nabla v_{\epsilon}\right\|_p^p}{\left\| v_{\epsilon}\right\|_{p^*}^{p^*}}\right)^{\frac{1}{p^*-p}},$$
			with
			\begin{eqnarray}\label{g(t_0)}
				\max_{t\in \mathbb{R}}g(t) & = & g(t_{0})=\frac{1}{N}\left(\frac{\left\| \nabla v_{\epsilon}\right\|_p^p}{\left\| v_{\epsilon}\right\|_{p^*}^p}\right)^{\frac{p^*}{p^*-p}}= \frac{1}{N}\left(\frac{\left\| \nabla u_{\epsilon}\right\|_p^p}{\left\| u_{\epsilon}\right\|_{p^*}^p}\right)^{\frac{p^*}{p^*-p}}\nonumber\\
				& = & \frac{1}{N}S^{\frac{N}{p}}+O(\epsilon^{\frac{N-p}{p-1}}),
			\end{eqnarray}
			by \eqref{gradient_u_epsilon} and \eqref{u_p*}. This gives us:
			\begin{eqnarray}\label{E(t_epsilon)}
				E(t_{\epsilon}\star v_{\epsilon}) & = & g(t_{\epsilon})+\frac{e^{spt_{\epsilon}}[v_{\epsilon}]_{s,p}^p}{p}-\frac{e^{2q\gamma_q t_{\epsilon}}A(v_{\epsilon})}{2q}\nonumber\\
				& \leq & \frac{1}{N}S^{\frac{N}{p}}+O(\epsilon^{\frac{N-p}{p-1}})+\frac{e^{spt_{\epsilon}}[v_{\epsilon}]_{s,p}^p}{p}-\frac{e^{2q\gamma_q t_{\epsilon}}A(v_{\epsilon})}{2q}.
			\end{eqnarray}
			Now, for $N>p^2$, by \eqref{gradient_u_epsilon}-\eqref{A(u_epsilon)} and \eqref{e_t_epsilon} we have:
			\begin{eqnarray*}
				\frac{e^{pt_{\epsilon}}[v_{\epsilon}]_{s,p}^p}{A(v_{\epsilon})} & = & \frac{e^{pt_{\epsilon}}\left\| u_{\epsilon}\right\|_p^{2q-p}[u_{\epsilon}]_{s,p}^p}{\tau^{\frac{2q-p}{p}}A(u_{\epsilon})} \leq \left(\frac{e^{pt_{\epsilon}}\left\| u_{\epsilon}\right\|_p^{2q-p}[u_{\epsilon}]_{s,p}^p}{\tau^{\frac{2q-p}{p}}A(u_{\epsilon})}\right) \left(\frac{B_{\epsilon}(u_{\epsilon})}{\left\| u_{\epsilon}\right\|_{p^*}^{p^*}}\right)^{\frac{p}{p^*-p}}\frac{\left\| u_{\epsilon}\right\|_p^p}{\tau}\\
				& = & \frac{\left\| u_{\epsilon}\right\|_p^{2q}[u_{\epsilon}]_{s,p}^p B_{\epsilon}(u_{\epsilon})^{\frac{p}{p^*-p}}}{\tau^{2q}A(u_{\epsilon})(\left\| u_{\epsilon}\right\|_{p^*}^{p^*})^{\frac{p}{p^*-p}}}\\
				& \leq & \frac{K'\epsilon^{(m_{N,p,s}+2q\gamma_q)}(K_3+O(\epsilon^{\frac{N-p}{p-1}}))^{2q}(K_1+O(\epsilon^{\frac{N-p}{p-1}}+O(\epsilon^{m_{N,ps}}))^{\frac{p}{p^*-p}}}{\tau^{2q}(K_2+O(\epsilon^{-\frac{N}{p-1}}))^{\frac{p}{p^*-p}}}\\
				& & \rightarrow 0 \text{ as } \epsilon \rightarrow 0.
			\end{eqnarray*}
			Therefore, $\frac{[v_{\epsilon}]_{s,p}^p}{A(v_{\epsilon})}\rightarrow 0$ as $\epsilon\rightarrow 0$, since, $0<e^{ap}<e^{pt_{\epsilon}}<e^{bp}$. 
			Thus, we can find $\epsilon_0>0$ small enough, such that $$\frac{e^{spt_{\epsilon}}[v_{\epsilon}]_{s,p}^p}{p}-\frac{e^{2q\gamma_q t_{\epsilon}}A(v_{\epsilon})}{2q}<0 \text{ for all }\epsilon<\epsilon_0.$$
			Hence, by \eqref{E(t_epsilon)}, we can conclude \eqref{m_r_tau}. Following the same procedure, one can easily deduce \eqref{m_r_tau} for $p<N\leq p^2$.
		\end{proof}
		\begin{myproof}{Theorem}{\ref{Theorem 1.3}}
			Let $\{u_n\}$ be the sequence deduced in \autoref{Lemma 5.5}. Suppose \ref{Lemma 2.2.6a} of \autoref{Lemma 5.6} holds, then there exists $u\in W^{1,p}(\mathbb{R}^N)$ such that $\{u_n\}\rightharpoonup u$ in $W^{1,p}(\mathbb{R}^N)$, and $u$ solves \eqref{1.1} with 
			$$E(u)\leq m_r(\tau)-\frac{S^{N/p}}{N}<0,$$
			by \autoref{Lemma 5.7}. Thus, by \autoref{Lemma 5.1} and \autoref{Lemma 5.3} we have, $I'_{u}(0)<0$ and hence $M(u)=M(0\star u)=I'_{u}(0)<0$. But since $u$ solve \eqref{1.1}, we must have $M(u)=0$. Therefore, \ref{Lemma 2.2.6b} of \autoref{Lemma 5.6} holds and hence the required result.
		\end{myproof}
		
	\end{document}